\documentclass[12pt]{amsart}  

\usepackage[latin1]{inputenc}
\usepackage{amsmath, amsfonts, amssymb}
\usepackage{mathtools}
\usepackage{comment}
\usepackage{graphicx}
\usepackage{subfigure}
\usepackage{color}
\usepackage{hyperref}
\usepackage{verbatim}
\usepackage[all]{xy}
\usepackage{graphics}
\usepackage{pdfsync}
\usepackage{ytableau}
\usepackage{subfiles}
\usepackage[normalem]{ulem} 

\oddsidemargin=0in
\evensidemargin=0in
\textwidth=6.50in             

\headheight=10pt
\headsep=10pt
\topmargin=.5in
\textheight=8in


\theoremstyle{plain}
\newtheorem{theorem}{Theorem}[section]

\newtheorem{lemma}[theorem]{Lemma}
\newtheorem{corollary}[theorem]{Corollary}

\newtheorem{algorithm}[theorem]{Algorithm}

\theoremstyle{remark}
\newtheorem{remark}[theorem]{Remark}

\numberwithin{equation}{section}


\newcommand{\bN}{\mathbb{N}}










\newcommand{\hn}{H_n(0)}
\newcommand{\smodule}{\mathbf{S}}
\newcommand{\sgrp}[1]{\mathfrak{S}_{#1}}


\newcommand{\des}{\mathrm{Des}} 
\newcommand{\comp}{\mathrm{comp}} 
\newcommand{\stan}{\mathrm{stan}}

\newcommand{\rtau}{{\tau}} 


\newcommand{\suchthat}{\;|\;}

\newcommand{\RT}{\ensuremath{\operatorname{RT}}}
\newcommand{\SRT}{\ensuremath{\operatorname{SRT}}}

\newcommand{\RCT}{\ensuremath{\operatorname{CT}}}
\newcommand{\SRCT}{\ensuremath{\operatorname{SCT}}}

\newlength\cellsize \setlength\cellsize{15\unitlength}
\savebox2{%
\begin{picture}(15,15)
\put(0,0){\line(1,0){15}}
\put(0,0){\line(0,1){15}}
\put(15,0){\line(0,1){15}}
\put(0,15){\line(1,0){15}}
\end{picture}}
\newcommand\cellify[1]{\def\thearg{#1}\def\nothing{}%
\ifx\thearg\nothing
\vrule width0pt height\cellsize depth0pt\else
\hbox to 0pt{\usebox2\hss}\fi%
\vbox to 15\unitlength{
\vss
\hbox to 15\unitlength{\hss$#1$\hss}
\vss}}
\newcommand\tableau[1]{\vtop{\let\\=\cr
\setlength\baselineskip{-16000pt}
\setlength\lineskiplimit{16000pt}
\setlength\lineskip{0pt}
\halign{&\cellify{##}\cr#1\crcr}}}
\savebox3{%
\begin{picture}(15,15)
\put(0,0){\line(1,0){15}}
\put(0,0){\line(0,1){15}}
\put(15,0){\line(0,1){15}}
\put(0,15){\line(1,0){15}}
\end{picture}}
\newcommand\expath[1]{%
\hbox to 0pt{\usebox3\hss}%
\vbox to 15\unitlength{
\vss
\hbox to 15\unitlength{\hss$#1$\hss}
\vss}}
\newcommand\bas[1]{\omit \vbox to \cellsize{ \vss \hbox to \cellsize{\hss$#1$\hss} \vss}}

\newcommand{\al}{\alpha}

\usepackage{tikz}
\usepackage{xcolor}
\usetikzlibrary{positioning,arrows,calc,intersections,shapes,decorations}
\newcommand{\PRCT}{\mathrm{PCT}}
\newcommand{\SPRCT}{\mathrm{SPCT}}
\newcommand{\st}{\ensuremath{\operatorname{st}}}
\newcommand{\equiva}{\sim _\alpha}
\newcommand{\Cat}{\sf Cat} 

\newcommand{\dyck}{\mathcal{D}} 
\newcommand{\ldyck}{\mathcal{LD}} 
\newcommand{\tree}{\mathcal{T}} 
\newcommand{\ltree}{\mathcal{T}^{\ell}} 
\newcommand{\prcttoldyck}{\texttt{SPCTtoLDyck}} 
\newcommand{\ldycktoprct}{\texttt{LDycktoSPCT}} 
\newcommand{\ldycktoltree}{\texttt{LDycktoLTree}} 
\newcommand{\ltreetoldyck}{\texttt{LTreetoLDyck}} 
\newcommand{\rasc}{\mathrm{rasc}}
\newcommand{\lasce}{\mathrm{lasc}}
\newcommand{\rdes}{\mathrm{rdes}}
\newcommand{\ldes}{\mathrm{ldes}}
\newcommand{\reversetopermuted}{\overline{\phi}}
\newcommand{\permutedtoreverse}{\overline{\rho}} 
\definecolor{ballblue}{rgb}{0.13, 0.67, 0.8}
\usepackage[normalem]{ulem} 
\newcommand{\DyckWord}{\mathrm{DyckWord}}
\newcommand{\NumSteps}{\mathrm{NumSteps}}
\newcommand{\upstep}{\sf u} 
\newcommand{\downstep}{\sf d} 
\newcommand{\dword}{\texttt{dw}} 
\newcommand{\ldword}{\texttt{ldw}} 
\newcommand{\mlpd}{\texttt{mlpd}} 
\newcommand{\lpath}{\text{left path }} 
\newcommand{\lpaths}{\text{left paths }} 
\newcommand{\rootof}[1]{\mathrm{root}(#1)} 
\newcommand{\udword}{\text{ud-word}} 
\newcommand{\Inv}{\mathrm{Inv}} 
\newcommand{\parent}{\mathrm{parent}} 
\usepackage{algorithm,algorithmicx, tabularx}
\usepackage[noend]{algpseudocode}
\makeatletter
\newcommand{\multiline}[1]{%
  \begin{tabularx}{\dimexpr\linewidth-\ALG@thistlm}[t]{@{}X@{}}
    #1
  \end{tabularx}
}
\makeatother

\algrenewcommand\algorithmicrequire{\textbf{Input:}}
\algrenewcommand\algorithmicensure{\textbf{Output:}}


\setlength{\intextsep}{2mm}


\begin{document}

\title[Permuted composition tableaux]{Permuted composition tableaux, 0-Hecke algebra and labeled binary trees}

\author{V. Tewari}
\address{Department of Mathematics, University of Washington, Seattle, WA 98195, USA}
\email{\href{mailto:vasut@math.washington.edu}{vasut@math.washington.edu}}

\author{S. van Willigenburg}
\address{Department of Mathematics, University of British Columbia, Vancouver, BC V6T 1Z2, Canada}
\email{\href{mailto:steph@math.ubc.ca}{steph@math.ubc.ca}}

\thanks{
The second author was supported in part by the National Sciences and Engineering Research Council of Canada.}
\subjclass[2010]{Primary 05E10, 20C08; Secondary 05A05, 05A19, 05C20, 05E05, 05E15}

\keywords{0-Hecke algebra, {allowable pair}, composition tableau, descent, {Dyck path}, labeled tree, {pattern avoidance}, reverse tableau}

\begin{abstract}
We introduce a generalization of semistandard composition tableaux called permuted composition tableaux.
These tableaux are intimately related to permuted basement semistandard augmented fillings studied by Haglund, Mason and Remmel.
Our primary motivation for studying permuted composition tableaux is to enumerate all possible ordered pairs of permutations $(\sigma_1,\sigma_2)$ that can be obtained by standardizing the entries in two adjacent columns of an arbitrary composition tableau.
We refer to such pairs as compatible pairs.
To study compatible pairs in depth, we define a $0$-Hecke action on permuted composition tableaux.
This action naturally defines an equivalence relation on these tableaux.
Certain distinguished representatives of the resulting equivalence classes in the special case of two-columned tableaux are in bijection with compatible pairs.
We provide a bijection between two-columned tableaux and labeled binary trees.
This bijection maps a quadruple of descent statistics for 2-columned tableaux to left and right ascent-descent statistics on labeled binary trees introduced by Gessel, and we use it to prove that the number of compatible pairs is $(n+1)^{n-1}$.
\end{abstract}

\maketitle
\tableofcontents

\section{Introduction}\label{sec:intro}
Composition tableaux were introduced in \cite{HLMvW-1} to define the basis of quasisymmetric Schur functions for the Hopf algebra of quasisymmetric functions.
These functions are analogues of the ubiquitous Schur functions \cite{schur}, have been studied in substantial detail recently \cite{BTvW, HLMvW-1, HLMvW-2, Lauve-Mason, LMvW, TvW}, {and have consequently been the genesis of an  active new branch of algebraic combinatorics discovering Schur-like bases in quasisymmetric functions \cite{AllenHallamMason, BBSSZ-0}, type $B$ quasisymmetric Schur functions \cite{JingLi, Oguz}, quasi-key polynomials \cite{AssafS, Searles} and quasisymmetric Grothendieck polynomials \cite{Monical}}.
Just as Young tableaux play a crucial role in the combinatorics of Schur functions \cite{sagan, stanley-ec2}, composition tableaux are key to understanding the combinatorics of quasisymmetric Schur functions.

The aim of this article is to shed  light on certain enumerative aspects of composition tableaux by studying actions of the $0$-Hecke algebra of type $A$.
Recall that the number of standard Young tableaux is given by the famous hook-length formula of Frame-Robinson-Thrall \cite{FRT}.
While there is no known `hook-length formula' for enumerating standard composition tableaux, and the presence of large prime factors in the data suggests there might not be a simple closed formula, we hope that our article convinces the reader of other interesting enumerative aspects of composition tableaux and motivates further exploration.

The following question serves as the primary motivation for this article: {Can we} characterize and enumerate the possible ordered pairs of permutations $(\sigma_1,\sigma_2)$ obtained by considering the relative order of entries in two adjacent columns of an arbitrary composition tableau?
This question in the context of Young tableaux is uninteresting.
On the other hand, it is  natural with regards to composition tableaux as the relative order of entries in{, say,} column $i$ of a composition tableau $\tau$ is governed by the relative order of the entries in column $i-1$.
This is because of the so-called triple condition/rule \cite[Definition 4.1]{HLMvW-1}.
If the columns under consideration have the same number of entries, say $n$,  then we say that the resulting pair $(\sigma_1,\sigma_2)$ is a \emph{compatible pair} of permutations in $\sgrp{n}$.
We remark here that the assumption that the two columns possess the same number of entries is  mild  as this can always be achieved by supplementing the composition tableau under consideration with $0$s as suggested in \cite[Definition 4.1 part 3)]{HLMvW-1}.
One of our main results is that the number of compatible pairs of permutations in $\sgrp{n}$ is $(n+1)^{n-1}$.
To study the combinatorics of an arbitrary pair of adjacent columns in a composition tableau, we introduce a generalization thereof that we call permuted composition tableaux.
Just as semistandard composition tableaux are intimately tied to semistandard augmented fillings \cite{HLMvW-1}, our permuted composition tableaux are connected to permuted basement semistandard  augmented fillings introduced by Haglund-Mason-Remmel \cite{HMR}.
Given our interest in the relative order of entries in columns,
the authors' previous work \cite{TvW} hints at constructing an action of the $0$-Hecke algebra $H_{n}(0)$ on permuted composition tableaux.
To this end, we generalize the operators from \cite{TvW} to our current setting and also obtain analogues of the results therein.

To answer our original question, we focus on standard permuted composition tableaux of shape $(2^n)$.
We refer to such tableaux as $2$-columned tableaux.
The $0$-Hecke action that we construct naturally leads to a deeper study of descents in the first column in $2$-columned tableaux and they naturally come in four flavors.
We construct a bijection between the set of $2$-columned tableaux  and labeled binary trees  that maps certain  descent statistics on tableaux to  ascent-descent statistics on labeled binary trees.
Note that the ascent-descent statistics on labeled binary trees were first studied in unpublished work by Gessel, and  functional equations for their
distribution were established by Kalikow \cite{Kalikow} and Drake
\cite{Drake-thesis}.
Given the striking observations of Gessel \cite{Gessel-Oberwolfach} relating the distribution of these
statistics to enumerative questions in the theory of hyperplane
arrangements, much work has been done recently
\cite{Bernardi, Corteel-Forge-Ventos,  Forge, GesselGriffinTewari, Tewari}.
That said, we will not focus on the hyperplane arrangements perspective in this article.

\smallskip
\noindent {\bf Outline of the article.} The paper essentially has two halves: the first half focuses on studying   the combinatorics of permuted composition tableaux and defining a $0$-Hecke action on them. The second half concerns itself  with enumerating and characterizing compatible pairs by way of Dyck paths and binary trees.

In Section \ref{sec:prelims}, we introduce our main combinatorial objects, and develop most of the notation we need.
Our main result here is Theorem~\ref{thm:generalized shift map} that makes explicit the link between reverse tableaux and permuted composition tableaux.
In Section~\ref{sec:0-Hecke}, we construct a $0$-Hecke action on the set of permuted composition tableaux of a given shape.
Section~\ref{sec:ascents-descents-trees} studies certain descents in $2$-columned tableaux and relates them to ascents-descents in labeled binary trees via
bijections that use labeled Dyck paths as intermediate objects.
Our main result in this section is Theorem~\ref{thm:stat-preserving bijection}.
Section~\ref{sec:allowable acyclic} gives a characterization for compatible pairs in terms of pattern-avoidance.
We demonstrate that compatible pairs are essentially allowable pairs introduced in \cite{AtkinsonThiyagarajah} and investigated further {in \cite{ALW, GilbeyKalikow, Hamel}}.
To answer our original question, we show in Corollary~\ref{cor:existence of srcts} that every allowable pair can be obtained by standardizing the entries in the last two columns of some standard composition tableau.

\section{Preliminaries}\label{sec:prelims}
We denote the set of positive integers by $\bN$. Given $n\in \bN$, we define $[n]$ to be the set of the first $n$ positive integers $\{1,\dots, n\}$.
The set of all words in the alphabet $\bN$ is denoted by $\bN^{*}$.
The empty word is denoted  by $\varepsilon$.
A \emph{\udword} is a word in the alphabet $\{\upstep,\downstep\}$.
A related notion is that of  a \emph{labeled ud-word}, which is a word in the alphabet $\{{\upstep}_i,{\downstep}_i\suchthat i\in \bN\}$.
There is a natural projection $\chi$ from the set of labeled ud-words to the set of ud-words defined by mapping ${\upstep}_i$ to $\upstep$ and ${\downstep}_{i}$ to $\downstep$ for all $i\in \bN$.

\subsection{The symmetric group $\sgrp{n}$}\label{subsec:symmetric group}
The symmetric group $\sgrp{n}$ is generated by the elements $s_1,\dots, s_{n-1}$ subject to the following  relations
\begin{eqnarray*}
	s_i^2&=&1 \text{ for } 1\leq i\leq n-1\\s_{i}s_{i+1}s_{i}&=&s_{i+1}s_is_{i+1} \text { for } 1\leq i\leq n-2\\s_{i}s_{j}&=&s_js_{i} \text{ if } \lvert i-j\rvert \ge 2.
\end{eqnarray*}
In the first relation above, the $1$ denotes the identity element in $\sgrp{n}$.
In practice, one identifies $\sgrp{n}$ with the group of all bijections from $[n]$ to itself, otherwise known as \emph{permutations}, by setting $s_i$ to be the \emph{simple transposition} interchanging $i$ and $i+1$ for $1\leq i\leq n-1$.
An expression for $\sigma\in \sgrp{n}$ of the form $s_{i_1}\cdots s_{i_p}$ that uses the minimal number of simple transpositions is called a \emph{reduced word} for $\sigma$.

We write permutations in one-line notation and, on occasion, treat them as words in $\bN^{*}$.
Given a word $w=w_1\cdots w_n\in \bN^{*}$, we define the \emph{standardization} of $w$, denoted by $\stan(w)$, to be the unique permutation $\sigma \in \sgrp{n}$ such that $\sigma(i) > \sigma (j)$ if and only if $w_i  > w_j$ for $1\leq i<j \leq n$.
For example, $\stan(3122)=4123$.
An \emph{inversion} of $\sigma\in\sgrp{n}$ is an ordered pair $(p,q)$  such that $1\leq p<q\leq n$ and $\sigma(p)>\sigma(q)$.
The set of inversions in $\sigma$ is denoted by $\Inv(\sigma)$.
This given, define the (left) \emph{weak Bruhat order} $\leq_{L}$ on $\sgrp{n}$ by $\sigma_1 \leq_{L} \sigma_2$ if and only if $\Inv(\sigma_1)\subseteq \Inv(\sigma_2)$ \cite[Proposition 3.1.3]{bjorner-brenti}.
We denote the cover relation in the weak Bruhat order by $\prec_L$.
The symmetric group $\sgrp{n}$ endowed with the weak Bruhat order $\leq_L$ inherits the structure of a graded lattice with a unique minimum element (given by the identity permutation) and a unique maximum element (given by the reverse of the identity permutation).
To emphasize the dependence on $n$, the identity permutation  in $\sgrp{n}$ and its reverse will henceforth be denoted by $\epsilon_n$ and $\bar{\epsilon}_n$ respectively.

\subsection{The \texorpdfstring{$0$-Hecke algebra $\hn$}{0-Hecke algebra}}\label{subsec:reps}
The $0$-Hecke algebra $H_n(0)$ is the $\mathbb{C}$-algebra generated by the elements $T_1,\ldots,T_{n-1}$ subject to the following relations
\begin{eqnarray*}
	T_i^2&=&T_i \text{ for } 1\leq i\leq n-1\\T_{i}T_{i+1}T_{i}&=&T_{i+1}T_iT_{i+1} \text { for } 1\leq i\leq n-2\\T_{i}T_{j}&=&T_jT_{i} \text{ if } \lvert i-j\rvert \ge 2.
\end{eqnarray*}

If $s_{i_1}\cdots s_{i_p}$ is a reduced word for a permutation $\sigma\in \sgrp{n}$,  then we define an element $T_{\sigma}\in H_{n}(0)$ as
\begin{eqnarray*}
	T_{\sigma}=T_{i_1}\cdots T_{i_p}.
\end{eqnarray*}
The Word Property \cite[Theorem 3.3.1]{bjorner-brenti} of $\sgrp{n}$  implies that $T_{\sigma}$ is independent of the choice of reduced word.
Moreover, the set $\{T_{\sigma}\suchthat \sigma\in \sgrp{n}\}$ is a linear basis for $H_{n}(0)$.
Thus, the dimension of $H_{n}(0)$ is $n!$.
In general, $H_{n}(0)$ is not semisimple and possesses a rich combinatorial representation theory \cite{carter, norton}. It is intimately related with the Hopf algebras of quasisymmetric functions and noncommutative symmetric functions respectively \cite{DKLT}, in the same way as symmetric group representation theory is intimately connected with the Hopf algebra of symmetric functions. More information about $H_{n}(0)$ and its representations can be found in \cite{carter,mathas}, and contemporary results can be found in \cite{BBSSZ, huang-1, huang-2, huang-3, Huang-Rhoades, Konig}.
Our interest in the $0$-Hecke algebra stems from the authors' previous work in the context of providing a representation-theoretic interpretation for quasisymmetric Schur functions \cite{TvW}.

\subsection{Compositions, partitions and diagrams}\label{subsec:compositions et cetera}
A \emph{composition} $\alpha$ of a positive integer $n$ is a finite ordered list of positive integers $(\alpha_1,\dots, \alpha_k)$ satisfying $\sum _{i=1}^k\alpha_i=n$.
The $\alpha_i$ are called the \emph{parts} of $\alpha$ and their sum is called the \emph{size} of $\alpha$ (denoted by $|\alpha|$).
The number of parts is called the \emph{length} of $\alpha$ and is denoted by $\ell(\alpha)${, and we often denote $m$ consecutive parts $j$ by $j^m$}.
The \emph{empty composition}, denoted by $\varnothing$, is the unique composition of size and length $0$.
If $\alpha$ is a composition of size $n$, we denote this by $\alpha\vDash n$.
From $\alpha=(\alpha_1,\dots,\alpha_k)\vDash n$, we can obtain another composition $\hat{\al}\vDash 2n$ of length $n$ by first incrementing each part of $\alpha$ by $1$ and subsequently adding $n-k$ parts equaling $1$ at the end.
For instance, if $\al=(2,1,3)$, then $\hat{\al}=(3,2,4,1,1,1)$.
If $\alpha=(\al_1,\dots,\al_k)\vDash n$ is such that $\al_1\geq \dots \geq \al_k$, then we say that $\al$ is a \emph{partition} of $n$ and denote this by $\al\vdash n$.
The partition obtained by sorting the parts of $\alpha$  in weakly decreasing order is denoted by $\widetilde{\alpha}$.

We depict a composition $\alpha=(\alpha_1,\dots,\alpha_k)\vDash n$ by its \emph{composition diagram}, also referred to as $\alpha$, which is a left-justified array of $n$ cells  where the $i$-th row from the top has $\alpha_i$ cells.
The \emph{augmented composition diagram} of $\alpha$, denoted by $\hat{\alpha}$, is the composition diagram of $\hat{\al}$.
We refer to the first column of $\hat{\al}$ as its \emph{basement}.
Note that if $\alpha$ is a partition, then the composition diagram of $\alpha$ is the Young diagram of $\alpha$ in English notation.

\vspace*{-1mm}
\subsection{Tableaux}\label{subsec:tableaux}
Given a partition $\lambda\vdash n$, a \emph{reverse tableau} (henceforth abbreviated to $\RT$) $T$ of \emph{shape} $\lambda$ is a filling of the Young diagram of $\lambda$ with positive integers such that the rows decrease weakly from left to right, whereas the columns decrease strictly from top to bottom.
Let $\RT(\lambda)$ denote the set of reverse tableaux of shape $\lambda$ all of whose entries are weakly less than  $|\lambda|$.
If $T\in \RT(\lambda)$ is such that its entries are all distinct, then we say that $T$ is \emph{standard}.
We denote the set of standard $\RT$s of shape $\lambda$ by $\SRT(\lambda)$.
Additionally, we  refer to a standard $\RT$ as an $\SRT$.

Given $\alpha\vDash n$ and $\sigma\in \sgrp{\ell(\al)}$, we define a \emph{permuted composition tableau} (henceforth abbreviated to $\PRCT$) of \emph{shape} $\al$ and \emph{type} $\sigma$ to be a filling $\tau$ of the composition diagram $\al$ with positive integers such that the following conditions hold.
\begin{enumerate}
	\item The entries in the first column are all distinct and  the standardization of the word obtained by reading the first column from top to bottom is $\sigma$.
	\item The entries along the rows decrease weakly when read from left to right.
	\item For any configuration in $\tau$ of the type in Figure~\ref{fig:triple configuration}, if $a\geq c$ then $b>c$. We call this condition the \emph{triple condition}.
\end{enumerate}

\begin{figure}[ht]
	\centering
	\begin{align*}
	\ytableausetup{mathmode,boxsize=1.25em}
	\begin{ytableau}
	a & b\\
	\none & \none[\vdots]\\
	\none & c
	\end{ytableau}
	\end{align*}
	\caption{A triple {configuration.}}
	\label{fig:triple configuration}
\end{figure}
The first condition and the triple condition guarantee that the entries in any given column of a $\PRCT$ are all distinct.
Figure~\ref{fig:prct} gives a $\PRCT$ of shape $(1,3,2,4)$ and type $1324$.

\begin{remark}
	Note that our version of the triple condition follows \cite[Definition 4.2.6]{LMvW}.
	The version in \cite[Definition 4.1 part 3)]{HLMvW-1}, though stated differently, is equivalent. It is the latter that we employ in Section~\ref{sec:allowable acyclic}.
\end{remark}
Let $\PRCT^{\sigma}(\alpha)$ denote the set of all $\PRCT$s  of shape $\alpha$ and type $\sigma $ all of whose  entries are weakly less than $|\al|$.
Additionally, let $$\PRCT(\alpha)\coloneqq\coprod_{\sigma\in \sgrp{\ell(\al)}}\PRCT^{\sigma}(\al),$$
where $\coprod$ indicates disjoint union.
If $\tau\in \PRCT^{\sigma}(\al)$ is such that its entries are all distinct, then we say that $\tau$ is \emph{standard}.
We denote the set of standard $\PRCT$s of shape $\alpha$ and type $\sigma$ by $\SPRCT^{\sigma}(\al)$.
Additionally, we  refer to a standard $\PRCT$ as an $\SPRCT$.
Finally, let $$\SPRCT(\alpha)\coloneqq\coprod_{\sigma\in \sgrp{\ell(\al)}}\SPRCT^{\sigma}(\al).$$

\begin{remark}
	From this point on, whenever we write $\PRCT^{\sigma}(\alpha)$ or $\SPRCT^{\sigma}(\alpha)$ without explicitly specifying $\sigma$, it is implicit that $\sigma$ is a permutation in $\sgrp{\ell(\alpha)}$. 
	Furthermore, all the RTs (respectively $\PRCT$s) we consider in this article belong to  $\RT(\lambda)$ (respectively $\PRCT(\alpha)$) for the appropriate $\lambda$ (respectively $\alpha$).
\end{remark}
\begin{figure}[ht]
	\centering
	\begin{align*}
	\ytableausetup{mathmode,boxsize=1em}
  \begin{ytableau}
	1\\
	4 & 3 &2\\
	3 & 2\\
	7 & 5 & 5 & 3
\end{ytableau}\hspace{20mm}
	\begin{ytableau}
	1\\
	7 & 5 &2\\
	6 & 4\\
	10 & 9 & 8 & 3
	\end{ytableau}
	\end{align*}
	\caption{A $\PRCT$ (left) and an $\SPRCT$ (right) of shape $(1,3,2,4)$ and type $1324$.}
	\label{fig:prct}
\end{figure}

Given $\alpha\vDash n$ and $\tau\in \SPRCT(\al)$, we say that an integer $1\leq i\leq n-1$ is a \emph{descent} of $\tau$ if $i+1$ lies weakly right of $i$ in $\tau$.
The \emph{descent set} of $\tau$, denoted by $\des(\tau)$, consists of all descents in $\tau$.
For instance, the descent set of the $\SPRCT$ in Figure~\ref{fig:prct} is $\{1,2,4,6,7\}$.

\begin{remark}
	Note that if $\sigma$ equals $\epsilon_{\ell(\alpha)}$, then a $\PRCT$ of shape $\alpha$ and type $\sigma$ is in fact a \emph{composition tableau} (abbreviated henceforth to $\RCT$) introduced in \cite[Definition 4.1]{HLMvW-1} of the same shape $\alpha$.
  Similarly, an $\SPRCT$ of shape $\alpha$ and  type $\sigma=\epsilon_{\ell(\alpha)}$ corresponds to a \emph{standard $\RCT$} (abbreviated henceforth to $\SRCT$) of shape $\alpha$.
\end{remark}

Let $\alpha \vDash n$  whose largest part is $\alpha _{max}$ and let $\rtau \in \PRCT^{\sigma}(\alpha)$.
Suppose that the entries in column $i$ for $1\leq i \leq \alpha_{max}$ read from top to bottom form some word $w^i$ in $\bN^{*}$.
We refer to $w^i$ as the \emph{$i$-th column word} of $\tau$.
Furthermore, we define the \emph{standardized $i$-th column word} of $\rtau$, denoted by $\st_i(\rtau)$, to be $\stan(w^i)$.
Note that $\st_1(\tau)$ is $\sigma$ since $\tau\in \PRCT^{\sigma}(\al)$.
We define the \emph{standardized column word} of $\rtau$, denoted by $\st (\rtau)$, to be the word
$$\st (\rtau) = \st_1(\rtau) \  \st_2(\rtau) \cdots \st_{\alpha _{max}}(\rtau).$$
For the $\PRCT$ in Figure~\ref{fig:prct}, the standardized column word is $1324 \ 213 \ 12 \ 1$.

We now discuss two procedures connecting $\RT$s and $\PRCT$s.
The reader may interpret this correspondence as the analogue to that between $\RCT$s and semistandard augmented  fillings \cite{Mason-SLC}.
Consider a composition $\al$ and permutation $\sigma\in \sgrp{\ell(\al)}$.
Let $\al_{max}$ be the largest part of $\al$.
Our first procedure, $\permutedtoreverse_{\sigma}$, takes $\tau\in \PRCT^{\sigma}(\alpha)$ as input and outputs a filling $T$ of shape $\lambda\coloneqq\widetilde{\al}$ by considering the entries of column $i$ of $\tau$ in decreasing order and putting them in column $i$ of $\lambda$ from top to bottom for all $1\leq i\leq \al_{max}$.

For our second procedure $\reversetopermuted_{\sigma}$, let $\lambda$ be a partition and let $\sigma\in \sgrp{\ell(\lambda)}$.
This procedure takes $T\in \RT(\lambda)$ as input and outputs a filling $\tau$ as follows.
\begin{enumerate}
	\item Consider the entries in the first column of $T$ and write them in rows $1, 2, \ldots, \ell(\lambda)$ so that the standardization of the word obtained by reading from top to bottom is $\sigma$.
	\item Consider the entries in column $2$ in decreasing order and place each of them in the row with the smallest index so that the cell to the immediate left of the number being placed is filled and the row entries weakly decrease when read from left to right.
	\item Repeat the previous step with the set of entries in column $k$ for $k= 3, \ldots , \lambda_1$.
\end{enumerate}
Figure~\ref{fig:prct<->rt} illustrates the procedures just introduced and motivates the theorem that follows.
\begin{figure}[htbp]
	\centering
	\begin{align*}
	\ytableausetup{mathmode,boxsize=1em}
	\begin{ytableau}
	11 & 8 & 6 & 4\\
	10 & 7 & 5\\
	9 & 3 &1\\
	2
	\end{ytableau}
	\quad \mathrel{\mathop{\rightleftarrows}^{\mathrm{\reversetopermuted_{\sigma}}}_{\mathrm{\permutedtoreverse_{\sigma} }}} \quad
	\begin{ytableau}
	10 & 8 & 6 & 4 \\
	2\\
	11 & 7 & 5 \\
	9 & 3 & 1
	\end{ytableau}
	\end{align*}
	\caption{{An $\RT$} and a $\PRCT$ related via the maps $\permutedtoreverse_{\sigma}$ and $\reversetopermuted_{\sigma}$ where $\sigma=3142$.}
	\label{fig:prct<->rt}
\end{figure}

\begin{theorem}\label{thm:generalized shift map}
	Let $\lambda$ be a partition and let $\sigma\in \sgrp{\ell(\lambda)}$.
	The map
	$$\permutedtoreverse_{\sigma}:\coprod_{\widetilde{\alpha}=\lambda}\PRCT^{\sigma}(\alpha) \to \RT(\lambda)$$
	is a bijection and its inverse is the map $\reversetopermuted_{\sigma}$.
\end{theorem}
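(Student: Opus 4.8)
The plan is to verify that both procedures are well defined, land in the stated sets, and are mutually inverse; essentially all the content sits in a single structural reformulation of the triple condition, which I would isolate as a lemma. As a preliminary I would record that shapes match: for any composition $\alpha$ with $\widetilde{\alpha}=\lambda$, the number of cells in column $j$ equals $\#\{i : \alpha_i\ge j\}$, which is unchanged by sorting the parts, so column $j$ of any $\tau\in\PRCT^\sigma(\alpha)$ and column $j$ of $\lambda$ have the same length. This makes the output shapes of both procedures unambiguous, and guarantees that $\reversetopermuted_\sigma$ can only produce a composition $\alpha$ with $\widetilde{\alpha}=\lambda$.

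Next I would show $\permutedtoreverse_\sigma$ takes values in $\RT(\lambda)$. Strict decrease down each column is immediate, since the entries in any column of a $\PRCT$ are distinct and we sort them into decreasing order. For weak decrease along rows I would use a threshold/majorization argument applied to adjacent columns $k-1,k$ of $\tau$: sending each cell of column $k$ to the cell immediately to its left shows, for every value $v$, that column $k-1$ has at least as many entries $\ge v$ as column $k$ (this uses only that the rows of $\tau$ weakly decrease and that the diagram is left-justified, so every cell in column $k$ has a neighbor to its left). A standard consequence is that after sorting both columns decreasingly, the $m$-th entry of column $k-1$ is $\ge$ the $m$-th entry of column $k$ for all $m$ up to the length of column $k$, and these are exactly the horizontal comparisons in $\permutedtoreverse_\sigma(\tau)$.

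The crux, and the step I expect to be the main obstacle, is to reinterpret the triple condition as a greedy placement rule. Reading absent cells as $0$ (equivalently, using the HLMvW formulation of the triple condition, whose equivalence is noted in the Remark), the inequality $a\ge c \Rightarrow b>c$ with $a$ the entry in row $i$ and column $k-1$, $b$ the entry in row $i$ and column $k$, and $c$ the entry in row $j$ and column $k$ for $i<j$, says precisely that no entry of column $k$ can be moved to a strictly higher row while keeping the diagram left-justified with weakly decreasing rows. Concretely I would prove the lemma that a filling with first column of type $\sigma$ and weakly decreasing rows is a $\PRCT$ if and only if, for every $k\ge 2$, inserting the entries of column $k$ in decreasing order, each into the topmost row whose left neighbor is present and weakly larger than the entry, reproduces the filling. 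The degenerate case $b=0$ of the inequality is exactly what forbids an entry from sitting below an empty cell whose left neighbor dominates it; this is the configuration that would otherwise break injectivity, so its treatment is the delicate point.

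Finally I would assemble the bijection using this lemma. The greedy description is literally the recipe defining $\reversetopermuted_\sigma$, so $\reversetopermuted_\sigma(T)$ automatically has weakly decreasing rows, first column of type $\sigma$, and satisfies the triple condition; that the insertion never stalls follows from the same majorization inequality, since when the $m$-th largest entry of column $k$ is to be placed there are at least $m$ rows whose (already-filled) column-$(k-1)$ neighbor is large enough, of which at most $m-1$ are occupied, and left-justification forces the resulting shape to be a composition $\alpha$ with $\widetilde{\alpha}=\lambda$. Both procedures preserve the entry-set of each column, so $\permutedtoreverse_\sigma\circ\reversetopermuted_\sigma=\mathrm{id}$ is immediate, as sorting the columns of $\reversetopermuted_\sigma(T)$ returns $T$; and $\reversetopermuted_\sigma\circ\permutedtoreverse_\sigma=\mathrm{id}$ follows from the lemma, because $\permutedtoreverse_\sigma(\tau)$ has the same column entry-sets as $\tau$ and $\tau$ already has its entries in the canonical greedy positions, so re-inserting them recovers $\tau$. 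Hence $\permutedtoreverse_\sigma$ is a bijection with inverse $\reversetopermuted_\sigma$.
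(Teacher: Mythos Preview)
Your argument is correct and takes a genuinely different route from the paper's.  The paper does not prove the result directly; instead it embeds each $\tau\in\PRCT^{\sigma}(\alpha)$ into a permuted basement semistandard augmented filling $\hat{\tau}$ with basement permutation $\hat{\sigma}\in\sgrp{n}$, and then identifies $\permutedtoreverse_{\sigma}$ and $\reversetopermuted_{\sigma}$ with the Haglund--Mason--Remmel maps $\rho_{\hat{\sigma}}$ and $\rho_{\hat{\sigma}}^{-1}$ restricted to these fillings.  Injectivity of both maps, and the fact that they are mutually inverse, is then imported from \cite{HMR} (specifically Corollary~9 and Theorem~10 there).  So the paper trades a direct verification for a reduction to existing machinery.

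Your approach, by contrast, is entirely self-contained: the majorization argument for weak row decrease in $\permutedtoreverse_{\sigma}(\tau)$, and especially the lemma that the triple condition (in its $0$-augmented HLMvW form) is \emph{equivalent} to the statement that greedy top-down insertion of each column reproduces the filling, are exactly the right structural observations, and they make the two compositions equal to the identity with no outside input.  The no-stall argument via $S\subseteq R$ is clean.  What you gain is transparency and independence from \cite{HMR}; what the paper's route gains is an explicit link between $\PRCT$s and PBFs, which it uses elsewhere only implicitly.  One point worth flagging explicitly in your write-up: the lemma genuinely requires the $0$-convention (i.e., the condition must also forbid the configuration where $b$ is an absent cell with $a\ge c$), since otherwise $\permutedtoreverse_{\sigma}$ fails to be injective; you already note this, but it is the one place a reader could stumble given that the paper's Figure~\ref{fig:triple configuration} depicts only the non-degenerate case.
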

\begin{proof}
	First, we  show that $\permutedtoreverse_{\sigma}$ and $\reversetopermuted_{\sigma}$ are both injections, which suffices to conclude that they are in fact bijections.
	To this end, we will use \emph{permuted basement semistandard augmented fillings} (henceforth abbreviated to PBFs) introduced in \cite{HMR} generalizing the notion of semi-standard augmented fillings from \cite{Mason}.
	We refer the reader to \cite{HMR} for details on the terminology used in our proof.
	The reader will benefit from referring to Figure~\ref{fig:HMR} while reading this proof.

	Let $\lambda\vdash n$ and let $k=\ell(\lambda)$.
	Consider $\tau \in \PRCT^{\sigma}(\alpha)$ where $\widetilde{\alpha}=\lambda$.
	Suppose that the first column word of $\tau$ is $w=w_1\cdots w_{k}$.
	Clearly $\stan(w)=\sigma$.
	Let $\hat{\sigma}\in \sgrp{n}$ be the permutation  obtained by concatenating the entries in $[n]\setminus \{w_1,\dots, w_k\}$ at the end of $w$ in increasing order.
	Consider the filling $\hat{\tau}$ of the augmented composition diagram $\hat{\al}$ constructed as follows.
	\begin{itemize}
		\item The  basement contains $\hat{\sigma}(i)$ for $i=1$ through $n$ from top to bottom.
		\item The rest of the diagram, which is essentially the composition diagram of  $\al$, is filled exactly as $\tau$.
	\end{itemize}
	Given its construction, one may check that $\hat{\tau}$ is a PBF with basement permutation $\hat{\sigma}$.
	Indeed, the triple condition satisfied by $\tau$ ensures that, in $\hat{\tau}$, all type $A$ and $B$ triples are inversion triples and the $B$-increasing condition is satisfied.
	We omit the details.
	Crucial for us is the fact that the association of $\hat{\tau}$ to $\tau$ is  one-to-one. 
	 Next, we describe a map that allows us to associate a reverse tableau with $\hat{\tau}$.

	Given a permutation $\pi$, Haglund-Mason-Remmel \cite[Section 4]{HMR} define a map $\rho_{\pi}$ that generalizes Mason's shift map \cite{Mason} (also known as the $\rho$ map).  
	This map takes as input a PBF $T_1$ with basement permutation $\pi$ and outputs the unique PBF $T_2$ such that the entries in any column of $T_2$ read from top to bottom are the entries in the corresponding column of $T_1$ read in decreasing order.
  Note that the basement permutation of $T_2$ is $\bar{\epsilon}_n$.
  Haglund-Mason-Remmel (see discussion after \cite[Corollary 9]{HMR}) establish that $T_2$ is in fact a reverse tableau.
  The map $\rho_{\pi}^{-1}$ \cite[Page 309]{HMR} is the same as our map $\bar{\phi}_{\pi}$.
  In Figure~\ref{fig:HMR}, the two fillings in the middle are PBFs related by $\rho_{\pi}$ where $ \pi=10\ 2 \ 11\ 9\ 1 \ 3 \ 4\ 5 \ 6 \ 7 \ 8$ in one-line notation.

	In light of the preceding discussion, using the map $\rho_{\hat{\sigma}}$ we can associate the reverse tableau $\hat{T}\coloneqq \rho_{\hat{\sigma}}(\hat{\tau})$ with $\hat{\tau}$.
	Furthermore, given how $\rho_{\hat{\sigma}}$ operates, we conclude that the reverse tableau $T$ obtained by omitting the basement $\bar{\epsilon}_n$ from $\hat{T}$ may be otherwise obtained by sorting the entries in individual columns of $\tau$ in decreasing order and writing them along columns in the Young diagram of $\lambda=\widetilde{\alpha}$.
	Thus, we have $\permutedtoreverse_{\sigma}(\tau)=T$.

	The injectivity of $\permutedtoreverse_{\sigma}$ is explained next.
	Suppose that $\tau_1$ and $\tau_2$ satisfy $\permutedtoreverse_{\sigma}(\tau_1)= \permutedtoreverse_{\sigma}(\tau_2)$.
	Let $\hat{\sigma}_1, \hat{\sigma}_2\in \sgrp{n}$ be obtained by extending the first column words in $\tau_1$ and $\tau_2$ respectively as before.
	Our hypothesis implies that  $\rho_{\hat{\sigma}_1}(\hat{\tau}_1)=\rho_{\hat{\sigma}_2}(\hat{\tau}_2)$.
	Thus, we have that the set of entries in the corresponding columns  of $\tau_1$ and $\tau_2$ are the same.
	Since the standardized first column word of both $\tau_1$ and $\tau_2$ is $\sigma$, the first column of $\tau_1$ is the same as the first column of $\tau_2$.
	This implies $\hat{\sigma}_1 = \hat{\sigma}_2$, which, in view of {\cite[Theorem 10 part 2)]{HMR}}, implies that $\hat{\tau}_1=\hat{\tau}_2$.
	It follows immediately that $\tau_1=\tau_2$.
	Thus, $\permutedtoreverse_{\sigma}$ is an injection from $\coprod_{\widetilde{\alpha}=\lambda}\PRCT^{\sigma}(\alpha) \to \RT(\lambda)$.

	To establish the injectivity of $\reversetopermuted_{\sigma}$, we again use results in \cite{HMR}, beginning with an alternative description of $\reversetopermuted_{\sigma}$.
	Let $T\in \RT(\lambda)$.
	Construct a PBF $\hat{T}$ of shape $\hat{\lambda}$ with basement permutation $\overline{\epsilon}_n$ by filling the remaining diagram, essentially the composition diagram of $\lambda$, exactly as $T$.
	To obtain $\reversetopermuted_{\sigma}(T)$, construct a permutation $\hat{\sigma}$ such that
	\begin{itemize}
		\item its first $k$ letters are a rearrangement of the entries in the first column of $T$,
		\item the standardization of the word formed by the first $k$ letters is $\sigma$,
		\item the last $n-k$ entries increase when read from left to right.
	\end{itemize}
	Then $\hat{\tau}\coloneqq\rho_{\hat{\sigma}}^{-1}(\hat{T})=\reversetopermuted_{\hat{\sigma}}(\hat{T})$ is a PBF with basement permutation $\hat{\sigma}$.
	Given how $\hat{\sigma}$ relates to $\sigma$ and to the first column of $T$,
	we infer that the first $k$ rows in the  second column of $\hat{\tau}$ are the same as the first $k$ rows of the first column. 
  This implies that the filling obtained by removing the basement from  $\hat{\tau}$ is in fact {$\reversetopermuted_{\sigma}(T)$}.

	To establish the injectivity of $\reversetopermuted_{\sigma}$, argue as follows.
	Suppose $T_1, T_2\in \RT(\lambda)$  are such that $\reversetopermuted_{\sigma}(T_1)=\reversetopermuted_{\sigma}(T_2)$.
	The preceding discussion implies that there exist permutations $\hat{\sigma}_1, \hat{\sigma_2} \in \sgrp{n}$ such that $\rho_{\hat{\sigma}_1}^{-1}(\hat{T_1})=\rho_{\hat{\sigma}_2}^{-1}(\hat{T_2})$, where $\hat{T_1}$ and $\hat{T_2}$ are obtained by appending the basement permutation $\overline{\epsilon}_n$ as before.
	Again, we conclude that the sets of entries in corresponding columns of $T_1$ and $T_2$ are equal, which implies that $T_1=T_2$.

	To finish the proof, we need to establish that $\reversetopermuted_{\sigma}$ and $\permutedtoreverse_{\sigma}$ are mutually inverse.
	This follows from our alternate description for $\permutedtoreverse_{\sigma}$ and $\reversetopermuted_{\sigma}$ in terms of $\rho_{\hat{\sigma}}$ and $\rho_{\hat{\sigma}}^{-1}$ for an appropriately constructed $\hat{\sigma}$.
\end{proof}
Figure~\ref{fig:HMR} describes the various steps in the proof of Theorem~\ref{thm:generalized shift map} in the case of the $\RT$ and $\PRCT$ from Figure~\ref{fig:prct<->rt}.
Let $\sigma=3142$, $\alpha=(4,1,3,3)$ and $\lambda=(4,3,3,1)$.
The $\PRCT$ $\tau$  on the left belongs to $\PRCT^{\sigma}(\alpha)$.
The second filling  from the left is the PBF $\hat{\tau}$ with basement permutation $\hat{\sigma}$ constructed in the preceding proof.
We  have $\hat{\sigma}=10\ 2 \ 11\ 9\ 1 \ 3 \ 4\ 5 \ 6 \ 7 \ 8$  in one-line notation.
The second filling from the right is  $\rho_{\hat{\sigma}}(\hat{\tau})$ and the rightmost filling is $\permutedtoreverse_{\sigma}(\tau)$.
\begin{figure}[ht]
	\includegraphics{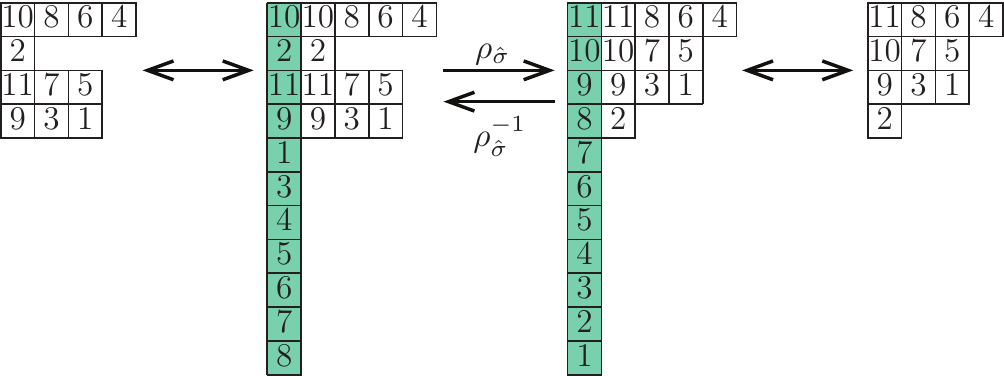}
	\caption{The bijection between $\coprod_{\widetilde{\alpha}=\lambda}\PRCT^{\sigma}(\alpha)$ and $\RT(\lambda)$ via PBFs.}
	\label{fig:HMR}
\end{figure}

We conclude this section by noting that if $\sigma=\epsilon_{\ell(\alpha)}$, then the map $\permutedtoreverse_{\sigma}$ maps a $\RCT$ $\tau$ of shape $\alpha$ to an $\RT$ of shape $\widetilde{\alpha}$.

\section{0-Hecke modules on $\SPRCT$s}\label{sec:0-Hecke}
In order to define an $H_n(0)$-action on $\SPRCT$s we recall the concept of attacking defined in \cite{TvW}. In fact, we employ notions similar to those in \cite[Section 3]{TvW} throughout and essentially all results there are true in our case as well.
We focus solely on deriving analogues relevant for this article.

Given $\rtau \in \SPRCT^{\sigma}(\alpha)$ for some $\alpha\vDash n$, and a positive integer $i$ such that $1\leq i\leq n-1$, we say that $i$ and $i+1$ are \emph{attacking} if either
\begin{enumerate}
	\item $i$ and $i+1$ are in the same column in $\rtau$ (in which case we call them \emph{strongly attacking}), or
	\item $i$ and $i+1$ are in adjacent columns in $\rtau$, with $i+1$ positioned southeast of $i$ (in which case we call them \emph{weakly attacking}).
\end{enumerate}

Given an integer $i$ satisfying $1\leq i\leq n-1$, let $s_i(\rtau)$ denote the filling obtained by interchanging the positions of entries $i$ and $i+1$ in $\rtau$.
Define operators $\pi_i$ for $1\leq i\leq n-1$ as follows.
\begin{eqnarray}\label{eq:pi}
\pi_{i}(\rtau)&=& \left\lbrace\begin{array}{ll}\rtau & i\notin \des(\rtau)\\ 0 & i\in \des(\rtau), i \text{ and }  i+1 \text{ attacking}\\ s_{i}(\rtau) & i\in \des(\rtau), i \text{ and } i+1 \text{ { nonattacking}}\end{array}\right.
\end{eqnarray}
If $i\in \des(\rtau)$ is such that $i$ and $i+1$ are attacking (respectively nonattacking) then $i$ is an \emph{attacking descent} (respectively \emph{nonattacking descent}).

\begin{theorem}\label{the:0heckerels}
	The operators $\{ \pi _i \} _{i=1}^{n-1}$ satisfy the same relations as $\hn$. Thus, given a composition $\alpha\vDash n$, they define a $0$-Hecke action on $\SPRCT^{\sigma}(\alpha)$.
\end{theorem}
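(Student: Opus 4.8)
The plan is to verify the three defining relations of $\hn$ for the operators $\{\pi_i\}_{i=1}^{n-1}$, namely $\pi_i^2=\pi_i$, the braid relation $\pi_i\pi_{i+1}\pi_i=\pi_{i+1}\pi_i\pi_{i+1}$, and the commutation $\pi_i\pi_j=\pi_j\pi_i$ for $\lvert i-j\rvert\ge 2$, having first extended each $\pi_i$ linearly with the convention $\pi_i(0)=0$. Before touching the relations I would establish the key well-definedness fact implicit in the third branch of \eqref{eq:pi}: if $i$ is a nonattacking descent of $\rtau\in\SPRCT^{\sigma}(\alpha)$, then $s_i(\rtau)$ again lies in $\SPRCT^{\sigma}(\alpha)$. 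Since a nonattacking descent in particular forces $i$ and $i+1$ into distinct columns, at most one of them occupies the first column; because $i$ and $i+1$ are consecutive integers, exchanging them cannot alter the relative order of the first-column entries, so the type $\sigma$ is preserved. That the row-weakly-decreasing condition and the triple condition survive the swap is the exact analogue of the corresponding verification in \cite[Section 3]{TvW}, carried out by the same local case analysis.

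Granting well-definedness, the idempotency $\pi_i^2=\pi_i$ is the easiest relation. If $i\notin\des(\rtau)$ then $\pi_i$ fixes $\rtau$ and there is nothing to check; if $i$ is an attacking descent then $\pi_i(\rtau)=0$ and $\pi_i(0)=0$. The only substantive case is a nonattacking descent, where $\pi_i(\rtau)=s_i(\rtau)$. Here the point is that a nonattacking descent has $i+1$ strictly right of $i$ (never in the same column), so after the swap the entry $i+1$ sits strictly left of the entry $i$; hence $i\notin\des(s_i(\rtau))$ and a second application of $\pi_i$ leaves $s_i(\rtau)$ unchanged. Thus $\pi_i^2=\pi_i$ in every case.

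For the commutation relation with $\lvert i-j\rvert\ge 2$ I would argue by locality. The operator $\pi_i$ only ever moves the entries $i$ and $i+1$, and reads only their positions to decide its value; when $\lvert i-j\rvert\ge 2$ the pairs $\{i,i+1\}$ and $\{j,j+1\}$ are disjoint, so applying $\pi_i$ changes neither the cells occupied by $j$ and $j+1$ nor, consequently, whether $j$ is a descent or whether $j$ and $j+1$ are attacking. Therefore the value of $\pi_j$ is computed from the same data before and after applying $\pi_i$, and symmetrically; tracking the three outcomes (annihilation, fixing, swapping) of each operator then gives $\pi_i\pi_j(\rtau)=\pi_j\pi_i(\rtau)$, both sides vanishing exactly when either operator sends the relevant filling to $0$.

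The braid relation is the crux and the step I expect to cost the most effort. Here all three entries $i,i+1,i+2$ interact, and I would organize the argument by the joint configuration of their columns, which determines the descent status of $i$ and of $i+1$ and the attacking relations among the three pairs. The goal in each configuration is to show that both triple composites $\pi_i\pi_{i+1}\pi_i$ and $\pi_{i+1}\pi_i\pi_{i+1}$ send $\rtau$ to the same result: either both to $0$, or both to the filling obtained by the common rearrangement of $\{i,i+1,i+2\}$. The delicate points are that an intermediate swap can change which later descents are attacking, and that each intermediate filling must be checked to remain a valid $\SPRCT$ so that the next operator is defined; these are precisely the phenomena governed by the triple condition. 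Since the operators are local to $\{i,i+1,i+2\}$ and every relevant result of \cite[Section 3]{TvW} transfers to the permuted setting, I would reduce to the finite list of column configurations of these three consecutive entries and verify the identity configuration-by-configuration, completing the proof that $\{\pi_i\}$ satisfies the relations of $\hn$ and hence defines a $0$-Hecke action on $\SPRCT^{\sigma}(\alpha)$.
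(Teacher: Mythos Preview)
Your proposal is correct and follows essentially the same route as the paper: first prove that the swap $s_i$ preserves $\SPRCT^{\sigma}(\alpha)$ when $i$ is a nonattacking descent (the paper's Lemma~\ref{lem:switchPRCT}, established by the local triple-configuration check you describe), then verify idempotency, distant commutation, and the braid relation by the standard case analysis, deferring the lengthy braid verification to \cite{TvW}. One small caution: your remark that ``every relevant result of \cite[Section 3]{TvW} transfers'' is slightly too strong---the paper explicitly notes that \cite[Lemmas 3.4--3.6]{TvW} do \emph{not} carry over to the permuted setting---but this does not affect your argument, since the swap lemma (the analogue of \cite[Lemma 3.7]{TvW}) is all that is actually needed and you prove it directly.
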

The proof of the above theorem in the context of $\SRCT$s  involves a lengthy verification and relies on some preliminary lemmas \cite{TvW}.
The techniques we need vary slightly and we discuss the differences next.
The key lemma for proving that the operators give rise to a $0$-Hecke action on $\SRCT$s is \cite[Lemma 3.7]{TvW} and it, in turn, relies  on \cite[Lemmas 3.4, 3.5, 3.6]{TvW}.
Since we avoid an approach involving box-adding operators here, we cannot use an analogue of \cite[Lemmas 3.4 and 3.5]{TvW} for $\SPRCT$s.
In fact, \cite[Lemma 3.6]{TvW} does not hold in our setting.
In spite of this, we do have the following analogue of  \cite[Lemma 3.7]{TvW}.
\begin{lemma}\label{lem:switchPRCT}
	\begin{enumerate}
		\item If $\alpha \vDash n$, $\rtau \in  \SPRCT^{\sigma}(\alpha)$ and $j\in \des(\rtau)$ such that $j$ and $j+1$ are nonattacking, then $s_j(\rtau)\in \SPRCT^{\sigma}(\alpha)$.
		\item If $\alpha \vDash n$, $\rtau \in  \SPRCT^{\sigma}(\alpha)$ and $j\notin \des(\rtau)$ such that $j$ is not in the cell to the immediate right of $j+1$, then $s_j(\rtau)\in \SPRCT^{\sigma}(\alpha)$.
	\end{enumerate}
\end{lemma}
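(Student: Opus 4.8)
The plan is to verify directly that $s_j(\rtau)$ again satisfies the three defining conditions of a member of $\SPRCT^{\sigma}(\alpha)$; standardness and the underlying shape are immediate, since $s_j$ only interchanges the two entries $j$ and $j+1$. The observation that drives the entire argument is that $j$ and $j+1$ are \emph{consecutive}: for every entry $x\notin\{j,j+1\}$ we have $x<j \iff x<j+1$ and $x>j \iff x>j+1$. Hence replacing $j$ by $j+1$ (or conversely) in a cell does not change how that cell compares with any other cell except the one holding its partner. Consequently the only places where a defining inequality can possibly be broken are the local configurations in which the cell of $j$ and the cell of $j+1$ interact directly, and the hypotheses of parts (1) and (2) are tailored to exclude precisely the harmful ones.

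First I would settle the type. The entries $j$ and $j+1$ never lie in the first column simultaneously: in part (1) they would then be strongly attacking, contradicting the nonattacking hypothesis, while in part (2) the assumption $j\notin\des(\rtau)$ forces $j+1$ strictly to the left of $j$, so they occupy distinct columns. Thus at most one first-column entry is altered, from $j$ to $j+1$ or from $j+1$ to $j$; as nothing lies strictly between $j$ and $j+1$, the standardization of the first-column word is unchanged and the type remains $\sigma$.

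For the weakly decreasing row condition, distinctness of entries means a violation can arise only if $j$ and $j+1$ are horizontally adjacent. The arrangement with $j$ immediately left of $j+1$ cannot occur in $\rtau$ to begin with, and the arrangement with $j+1$ immediately left of $j$ is excluded by hypothesis: in part (1) it would force $j\notin\des(\rtau)$, and in part (2) it is exactly the forbidden ``immediately right'' configuration. For the triple condition, the consecutiveness observation shows that a triple can change only when it contains \emph{both} the cell of $j$ and the cell of $j+1$; since a triple occupies two adjacent columns and these two cells lie in different columns, such a triple must place them in its top-left and bottom-right positions, the top-right position holding some $x\notin\{j,j+1\}$. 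If the bottom-right cell holds $j+1$, then $j+1$ sits strictly below and one column to the right of $j$, making $j$ a weakly attacking descent, which is excluded in part (1) and impossible in part (2). If instead the bottom-right cell holds $j$, then after the swap the top-left and bottom-right entries are $j$ and $j+1$; since $j+1<j$ is false, the forbidden pattern $x<(\text{bottom-right})<(\text{top-left})$ can never occur, so the triple stays valid automatically.

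The main obstacle is this triple-condition bookkeeping: one must check that the possible relative positions of the cells of $j$ and $j+1$ in adjacent columns line up correctly with the attacking/nonattacking and immediately-right dichotomies, confirming that the weakly attacking configuration is the only genuinely dangerous one and that every other position either fails to embed both cells into a common triple or produces an inequality that cannot be violated. I would carry this out as a brief case analysis on the row and column indices of the two cells, using the consecutiveness observation throughout to dispose of every triple and every row containing at most one of the two cells.
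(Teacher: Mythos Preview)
Your proof is correct and follows essentially the same approach as the paper's: verify the first-column standardization and row condition are preserved, then check the triple condition locally by observing that swapping the consecutive values $j$ and $j+1$ cannot change the relative order inside a triple unless both cells lie in that triple. The paper is terser, simply asserting that the hypotheses force at most one of $j,\,j+1$ into any fixed triple $\{a,b,c\}$, whereas you explicitly walk through the top-left/bottom-right placement and dispose of each sub-case; your version is more detailed but not genuinely different in strategy.
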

\begin{proof}
	It is clear in both cases that $s_j(\tau)$ has rows that strictly decrease when read from left to right and that the standardized first column word is $\sigma$.
	Thus, to establish that $s_j(\tau)\in \SPRCT^{\sigma}(\alpha)$, we need to check that the triple condition holds.
	Our proof proceeds by checking the triple condition locally.
	That is, we focus on a triple configuration involving a fixed set of cells in $\tau$ and show that if the triple condition held for this set of cells, then it continues to hold for the same set of cells in $s_j(\tau)$.
	Consider throughout a fixed triple configuration in $\tau$ as in Figure~\ref{fig:triple configuration}.

	Assume now that $j$ is  a nonattacking descent in $\tau$.
	If neither $j$ nor $j+1$ belongs to  $\{a,b,c\}$, then the triple condition obviously holds in $s_j(\tau)$. 
	Hence suppose this is not the case. Our premise implies that exactly one of $j$ or $j+1$ belongs to $\{a,b,c\}$.
	Thus, replacing the $j$ by $j+1$ in $\tau$ does not alter the relative order of entries in the cells in the triple configuration under consideration.
	Therefore, we conclude that $s_j(\tau)\in \SPRCT^{\sigma}(\alpha)$.

	We establish the second part of the lemma using an analysis similar to before.
	Assume that $j\notin \des(\tau)$ such that $j$ is not in the cell to the immediate right of $j+1$.
	As before, we infer that both $j$ and $j+1$ cannot belong to $\{a,b,c\}$. In the case where neither belongs, the triple condition clearly holds in $s_j(\tau)$.
	Hence assume that one of $j$ or $j+1$ belongs to $\{a,b,c\}$.
	Replacing the $j$ by $j+1$ in $\tau$ does not alter the relative order of entries in the cells in the triple configuration under consideration.
	Thus the triple condition holds in $s_j(\tau)$.
\end{proof}

We now state lemmas which show that the relations satisfied by the $0$-Hecke algebra $\hn$ are also satisfied by the operators $\{ \pi _i \} _{i=1} ^{n-1}$.
The proofs are omitted given their similarity to \cite[Lemmas  3.9, 3.10 and 3.11]{TvW} respectively. 
We emphasize here that only the first half of Lemma~\ref{lem:switchPRCT} is needed for establishing the three lemmas that follow, and thereby, Theorem~\ref{the:0heckerels}. The second half of Lemma~\ref{lem:switchPRCT} is crucial towards establishing Lemma~\ref{lem:unique source sink}.

\begin{lemma}\label{lem:pisquared}
	For $1\leq i\leq n-1$, we have $\pi_{i}^{2}=\pi_i$.
\end{lemma}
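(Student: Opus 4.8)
The plan is to verify the identity $\pi_i^2(\tau) = \pi_i(\tau)$ on each $\tau \in \SPRCT^{\sigma}(\alpha)$ by a case analysis following the three branches of the definition \eqref{eq:pi}. Since the operators act on the linear span of $\SPRCT^{\sigma}(\alpha)$ with $\pi_i(0) = 0$, it suffices to check the three possibilities for how $\pi_i$ acts on a genuine tableau $\tau$, after which idempotence on $0$ is automatic.

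First, if $i \notin \des(\tau)$, then $\pi_i(\tau) = \tau$, so applying $\pi_i$ again gives $\pi_i^2(\tau) = \pi_i(\tau) = \tau$. Second, if $i \in \des(\tau)$ and $i, i+1$ are attacking, then $\pi_i(\tau) = 0$, whence $\pi_i^2(\tau) = \pi_i(0) = 0 = \pi_i(\tau)$. Both of these cases are immediate and require no further input.

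The only case needing work is when $i$ is a nonattacking descent, so that $\pi_i(\tau) = s_i(\tau)$. Here I would first invoke Lemma~\ref{lem:switchPRCT}(1), which guarantees that $s_i(\tau)$ is again an element of $\SPRCT^{\sigma}(\alpha)$, so that $\pi_i$ may legitimately be applied to it. The goal is then to show that $i \notin \des(s_i(\tau))$: by the first branch of \eqref{eq:pi} this forces $\pi_i(s_i(\tau)) = s_i(\tau)$, and hence $\pi_i^2(\tau) = s_i(\tau) = \pi_i(\tau)$, completing the argument.

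The key step, and the only place where the combinatorial structure enters, is verifying that $i$ ceases to be a descent after the swap. Because $i$ is a nonattacking descent of $\tau$, the entries $i$ and $i+1$ occupy distinct columns, say columns $c_i$ and $c_{i+1}$ with $c_i < c_{i+1}$; indeed they cannot share a column, as that would make them strongly attacking, while $i+1$ lies weakly right of $i$ by the descent condition. In $s_i(\tau)$ the value $i$ now sits in column $c_{i+1}$ and the value $i+1$ in column $c_i$, so since $c_i < c_{i+1}$ the entry $i+1$ lies strictly to the left of $i$ and therefore does not lie weakly right of it. Hence $i \notin \des(s_i(\tau))$, as required. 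I expect this positional observation to be the only genuine obstacle; once it is in place, $\pi_i^2 = \pi_i$ follows at once from the case analysis above.
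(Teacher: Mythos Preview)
Your proof is correct. The paper omits its own proof of this lemma, pointing instead to \cite[Lemma~3.9]{TvW}; your case analysis on the three branches of \eqref{eq:pi}, with the observation that a nonattacking descent forces $i$ and $i+1$ into distinct columns so that the swap kills the descent, is exactly the standard argument and matches what the referenced proof does.
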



\begin{lemma}\label{lem:pidifferby2}
	For $1\leq i,j\leq n-1$ such that $\lvert i-j\rvert \geq 2$, we have $\pi_i\pi_j=\pi_j\pi_i$.
\end{lemma}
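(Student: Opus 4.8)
The plan is to exploit the fact that when $|i-j|\ge 2$ the two unordered pairs of entries $\{i,i+1\}$ and $\{j,j+1\}$ are disjoint, so the actions of $\pi_i$ and $\pi_j$ interfere only through bookkeeping. Throughout I would extend each $\pi_k$ linearly to the $\bC$-span of $\SPRCT^{\sigma}(\alpha)\cup\{0\}$, so that $\pi_k(0)=0$; it then suffices to verify $\pi_i\pi_j(\tau)=\pi_j\pi_i(\tau)$ for a single $\tau\in\SPRCT^{\sigma}(\alpha)$.

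First I would record the key locality observation. Inspecting \eqref{eq:pi}, the value of $\pi_i(\tau)$ is completely determined by the trichotomy ``$i\notin\des(\tau)$ / $i$ an attacking descent / $i$ a nonattacking descent'', and this trichotomy is a function only of the positions of the two cells containing $i$ and $i+1$: whether $i$ is a descent depends on the columns of $i$ and $i+1$, and whether an existing descent is attacking depends on whether these two cells share a column or sit in adjacent columns with $i+1$ to the southeast. Since $j,j+1\notin\{i,i+1\}$, neither the identity action nor the swap $s_j$ moves the cells holding $i$ or $i+1$; hence the trichotomy for $i$ is identical in $\tau$ and in $\pi_j(\tau)$ whenever $\pi_j(\tau)\neq 0$, and symmetrically for $j$. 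I would also note that $s_i$ and $s_j$ commute as operations on fillings, because they transpose disjoint pairs of entries.

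With this in hand the verification is a short case analysis on the trichotomy types $t_i,t_j\in\{\mathrm{id},\mathrm{zero},\mathrm{swap}\}$. If $t_i=\mathrm{id}$ then $\pi_i$ fixes $\tau$, and by locality $i$ remains a non-descent in $\pi_j(\tau)$ whenever the latter is nonzero, so $\pi_i$ fixes $\pi_j(\tau)$ in every case; thus both composites equal $\pi_j(\tau)$, and $t_j=\mathrm{id}$ is symmetric. If $t_i=\mathrm{zero}$ then $\pi_i(\tau)=0$, whence $\pi_j\pi_i(\tau)=0$; and since $i$ stays an attacking descent in $\pi_j(\tau)$ by locality, $\pi_i\pi_j(\tau)=0$ as well (the subcase $\pi_j(\tau)=0$ being immediate), with $t_j=\mathrm{zero}$ handled symmetrically. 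The only genuinely interactive case is $t_i=t_j=\mathrm{swap}$: here Lemma~\ref{lem:switchPRCT}(1) guarantees $s_j(\tau)\in\SPRCT^{\sigma}(\alpha)$, in which $i$ is still a nonattacking descent, so $\pi_i\pi_j(\tau)=s_i s_j(\tau)$; symmetrically $\pi_j\pi_i(\tau)=s_j s_i(\tau)$; and these agree since the swaps commute.

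I do not expect a serious obstacle here, as the content is entirely the disjointness/locality observation of the second paragraph. The only points requiring care are the mild bookkeeping of the $0$ outcomes (handled by linearity together with the fact that an attacking descent of $i$ remains attacking under $s_j$) and the appeal to Lemma~\ref{lem:switchPRCT}(1) to ensure the intermediate filling $s_j(\tau)$ is a legitimate $\SPRCT$ before $\pi_i$ is applied to it.
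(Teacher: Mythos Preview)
Your proposal is correct and follows essentially the approach the paper has in mind: the paper omits the proof as identical in spirit to \cite[Lemma~3.10]{TvW}, which is precisely this locality/disjointness argument together with a short case analysis, invoking only the first half of Lemma~\ref{lem:switchPRCT}. Your bookkeeping for the zero case and the appeal to Lemma~\ref{lem:switchPRCT}(1) to keep intermediate fillings in $\SPRCT^{\sigma}(\alpha)$ are exactly the points the paper flags as needing care.
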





\begin{lemma}\label{lem:piiiplus1}
	For $1\leq i\leq n-2$, we have $\pi_i\pi_{i+1}\pi_i=\pi_{i+1}\pi_i\pi_{i+1}$.
\end{lemma}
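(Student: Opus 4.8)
The plan is to reduce the braid relation to a finite, purely local verification. First I would observe that each of $\pi_i$ and $\pi_{i+1}$ fixes every entry of $\tau$ outside $\{i,i+1,i+2\}$, and that a nonzero application only ever interchanges the labels of the two cells holding a consecutive pair. Consequently, as long as no $0$ is produced, the three cells $X,Y,Z$ originally occupied by $i,i+1,i+2$ remain fixed throughout the computation of either product, and the operators merely permute the three labels among these three cells; and once $0$ appears it is absorbing since $\pi_j(0)=0$. Thus the outputs $\pi_i\pi_{i+1}\pi_i(\tau)$ and $\pi_{i+1}\pi_i\pi_{i+1}(\tau)$ depend only on the relative geometry (columns, and rows within adjacent columns) of the fixed cells $X,Y,Z$. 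It therefore suffices to fix this geometry, trace the two length-three words starting from the common initial labeling, and check that they terminate at the same state --- either the same $\SPRCT$ or both $0$. This \emph{localization} is the structural heart of the proof.

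To carry this out I would record the local dictionary governing a single operator on a consecutive pair. If the smaller value sits in a cell $A$ and the larger in a cell $B$, then: if $B$ is strictly left of $A$ we have a non-descent and the operator fixes $\tau$; if $B$ is in the same column as $A$, or $B$ is one column to the right of $A$ and strictly below it, then we have an attacking descent and the operator returns $0$; and in every remaining case ($B$ one column to the right and strictly above $A$, or $B$ at least two columns to the right) we have a nonattacking descent and the operator swaps the two labels. Here the row constraints of a standard tableau are used to discard degenerate configurations --- for instance, a value cannot sit immediately to the left of its successor in a single row --- and the fact that each such swap lands back in $\SPRCT^{\sigma}(\alpha)$ is exactly Lemma~\ref{lem:switchPRCT}(1). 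With this dictionary, the admissible geometries of $X,Y,Z$ fall into finitely many types, organized by the column order of the three cells and, for any two of them in adjacent columns, their vertical order.

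Finally I would run both reduced words through each geometric type and confirm agreement. The hard part is not the bookkeeping for the generic types, where both words simply sort $i,i+1,i+2$ into the stable configuration $c_{i+2}<c_{i+1}<c_i$ in which neither $i$ nor $i+1$ is a descent, but rather the borderline types in which performing one swap toggles the attacking status of the remaining consecutive pair. In these cases one must verify that a $0$ is produced by $\pi_i\pi_{i+1}\pi_i$ exactly when it is produced by $\pi_{i+1}\pi_i\pi_{i+1}$, and that the surviving nonzero outputs coincide; these are precisely the adjacent-column situations where the above/below relation can flip under a label swap. The argument is entirely parallel to that of \cite[Lemma 3.11]{TvW}, and the same localization principle also underlies Lemmas~\ref{lem:pisquared} and \ref{lem:pidifferby2}, which is why those verifications are analogous and have been omitted.
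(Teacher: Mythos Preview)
Your proposal is correct and follows essentially the same approach as the paper: a local case analysis on the positions of $i,i+1,i+2$, reducing the braid relation to a finite verification governed by the descent/attacking dichotomy and using Lemma~\ref{lem:switchPRCT}(1) to ensure each nonattacking swap stays in $\SPRCT^{\sigma}(\alpha)$. The paper in fact omits this proof entirely and defers to \cite[Lemma~3.11]{TvW}, which is precisely the case-by-case verification you outline; your ``localization to three cells'' framing is just a clean way to organize the same enumeration.
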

The proof of Theorem~\ref{the:0heckerels}  follows  from Lemmas~\ref{lem:pisquared}, \ref{lem:pidifferby2} and \ref{lem:piiiplus1}.
Let $\smodule_{\alpha}$ denote the $\mathbb{C}$-linear span of all $\SPRCT$s of shape $\alpha$.
We have established that $\smodule_{\alpha}$ is an $\hn$-module.
We now construct a direct sum decomposition of $\smodule_{\alpha}$ by defining an equivalence relation based on standardized column words of $\SPRCT$s.

\subsection{Source and sink tableaux}
Given a composition $\alpha \vDash n$, define an equivalence relation $ \equiva $ on $\SPRCT(\al)$ by defining $\tau_1\equiva \tau_2$ if and only if $\st (\tau_1)= \st(\tau_2)$, that is, the entries in each column of $\tau_1$ are in the same relative order as the entries in the corresponding column of $\tau_2$.
Suppose  that the equivalence classes under $\equiva$ are {$E_1,\ldots, E_k$}.
Let $\smodule_{\alpha,E_i}$ denote the $\mathbb{C}$-linear span of all $\SPRCT$s in $E_i$ for $i=1, \ldots, k$.
Then we get the following isomorphism of vector spaces
\begin{equation}\label{eq:Ssum}
\smodule_{\alpha}\cong\bigoplus_{i=1}^{k}\smodule_{\alpha,E_i},
\end{equation}
which is in fact an $\hn$-module isomorphism as the following lemma implies.
We omit the proof as it is the same as \cite[Lemma 6.6]{TvW}.

\begin{lemma}\label{lem:Emodule}
	Let $E_j$ for $j= 1, \ldots , k$ be the equivalence classes under $\equiva$ for $\alpha \vDash n$. Then
	for all $i$ such that $1\leq i\leq n-1$, we have that $\pi_i(\smodule_{\alpha,E_j}) \subseteq \smodule_{\alpha,E_j}$ for any $1\leq j\leq k$.
\end{lemma}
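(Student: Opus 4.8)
The plan is to verify the containment on a spanning set of $\smodule_{\alpha,E_j}$, namely the SPRCTs lying in the equivalence class $E_j$, and then extend by linearity. So I would fix $\tau \in E_j$ and an index $i$ with $1 \le i \le n-1$, and show $\pi_i(\tau) \in \smodule_{\alpha,E_j}$. Appealing to the definition of $\pi_i$ in \eqref{eq:pi}, there are three cases. If $i \notin \des(\tau)$ then $\pi_i(\tau) = \tau \in E_j$, and if $i \in \des(\tau)$ with $i$ and $i+1$ attacking then $\pi_i(\tau) = 0 \in \smodule_{\alpha,E_j}$; both are immediate. The only case requiring work is when $i$ is a nonattacking descent, where $\pi_i(\tau) = s_i(\tau)$.

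In that case I would first invoke Lemma~\ref{lem:switchPRCT}(1), which guarantees that $s_i(\tau)$ is again an element of $\SPRCT^{\sigma}(\alpha)$; it therefore suffices to show $s_i(\tau) \equiva \tau$, that is, $\st(s_i(\tau)) = \st(\tau)$. The crucial structural observation is that a nonattacking descent is in particular not strongly attacking, so $i$ and $i+1$ occupy cells in two \emph{distinct} columns of $\tau$. Passing from $\tau$ to $s_i(\tau)$ thus amounts to replacing the entry $i$ by $i+1$ in one column and the entry $i+1$ by $i$ in a different column, leaving every other entry untouched.

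The key point is then that this swap does not disturb the standardization of any single column word. Indeed, since $i$ and $i+1$ are consecutive integers, no entry of $\tau$ lies strictly between them; hence replacing $i$ by $i+1$ within its column (or $i+1$ by $i$ within its column) preserves the relative order of the entries of that column word. Consequently $\st_k(s_i(\tau)) = \st_k(\tau)$ for every column index $k$, which gives $\st(s_i(\tau)) = \st(\tau)$ and hence $s_i(\tau) \in E_j$. This closes the nonattacking case, and the linearity of $\pi_i$ then yields $\pi_i(\smodule_{\alpha,E_j}) \subseteq \smodule_{\alpha,E_j}$.

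I do not anticipate a serious obstacle: the entire argument hinges on the elementary fact that consecutive integers admit no integer between them, combined with the column-distinctness supplied by the nonattacking hypothesis. The one place to be careful is confirming that ``nonattacking'' genuinely forces $i$ and $i+1$ into different columns, which holds because the strongly attacking case is precisely the same-column case; this is exactly what prevents the swap from ever acting within a single column word, where it could otherwise alter a standardization.
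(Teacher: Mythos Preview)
Your proposal is correct and follows essentially the same argument as the paper's (which is omitted in the final text but appears commented out in the source, referencing \cite[Lemma 6.6]{TvW}): both reduce to the nonattacking-descent case and observe that since $i$ and $i+1$ lie in distinct columns, swapping them preserves the relative order in every column word, hence preserves $\st(\tau)$. Your treatment is slightly more explicit in handling the trivial cases and in invoking Lemma~\ref{lem:switchPRCT}(1), but the core idea is identical.
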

Next, we discuss two important classes of $\SPRCT$s that form special representatives of each equivalence class.
Let $\alpha\vDash n$. An $\SPRCT$ $\rtau$ of shape $\alpha$ is said to be a \emph{source} tableau if  for every $i\notin \des(\rtau)$ where $i\neq n$, we have that $i+1$ lies to the immediate left of $i$.
An $\SPRCT$ $\rtau$ of shape $\alpha$ is said to be a \emph{sink} tableau if for every $i\in \des(\rtau)$, we have that $i$ and $i+1$ are attacking.
Figure~\ref{fig:source and sink} shows a source tableau (left) and a sink tableau (right) of shape $(1,3,2,4)$ and type $1324$.
Note that the standardized column word of each {tableau} is $1324 \ 213 \ 12 \ 1$.

\begin{figure}[htbp]
	\centering
	\begin{align*}
	\begin{ytableau}
	1\\
	6 & 5 & 4\\
	3 & 2\\
	10 & 9 & 8 & 7
	\end{ytableau} \hspace{5mm}
	\begin{ytableau}
	3\\
	8 & 5 & 1\\
	7 & 4\\
	10 & 9 & 6 & 2
	\end{ytableau}
	\end{align*}
	\caption{An example of a source {tableau} and a sink tableau.}
	\label{fig:source and sink}
\end{figure}
By performing an analysis very similar to that in \cite[Section 6]{TvW} we obtain the following analogue of \cite[Corollary 6.15]{TvW}, which is  pertinent for our purposes.
See Remark~\ref{rem:drn} for a brief discussion regarding its proof.
\begin{lemma}\label{lem:unique source sink}
	There is a unique source {tableau} and a unique sink {tableau} in every equivalence class under $\equiva$.
\end{lemma}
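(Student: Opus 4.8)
The plan is to prove Lemma~\ref{lem:unique source sink} by establishing two things separately: existence and uniqueness of a source tableau in each equivalence class, and likewise for sink tableaux. Since the two cases are dual, I would focus on the source case and indicate how the sink case follows by a symmetric argument (reversing the roles of ``immediate left'' versus ``attacking,'' and of $i \notin \des$ versus $i \in \des$). The natural strategy is to show that within any equivalence class $E_j$ under $\equiva$, the $0$-Hecke operators $\pi_i$ act in a way that lets us ``flow'' from any tableau to a distinguished representative, and that this representative is characterized by the source (respectively sink) property.

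For \textbf{existence}, I would argue as follows. Fix $\tau \in \SPRCT^{\sigma}(\alpha)$ and consider the smallest $i \neq n$ with $i \notin \des(\tau)$ such that $i+1$ does \emph{not} lie immediately left of $i$. Since $i \notin \des(\tau)$, the entry $i+1$ lies strictly left of $i$; if it is not immediately left, then in particular $i$ is not in the cell immediately right of $i+1$, so by the second part of Lemma~\ref{lem:switchPRCT}, $s_i(\tau) \in \SPRCT^{\sigma}(\alpha)$. Crucially, by Lemma~\ref{lem:Emodule} (the switch preserves the standardized column word, exactly as in the omitted proof of that lemma) we have $s_i(\tau) \equiva \tau$, so $s_i(\tau)$ lies in the same equivalence class $E_j$. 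I would then set up a monovariant --- for instance a suitable statistic counting the total ``horizontal displacement'' that pairs $(i,i+1)$ are from being adjacent, or more robustly an induction on the weak Bruhat position of $\st(\tau)$ read columnwise --- that strictly decreases under such swaps, guaranteeing that iterating terminates at a tableau with the source property. The key point is that each legal swap stays inside $E_j$ and strictly improves the monovariant, so the process halts at a source tableau in $E_j$.

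For \textbf{uniqueness}, the cleanest route is to show that the source property pins down the tableau within its class. Suppose $\tau_1, \tau_2 \in E_j$ are both source tableaux, so $\st(\tau_1) = \st(\tau_2)$. I would argue by reconstructing the tableau columnwise: the standardized column words together with the source condition force the actual entries. Concretely, the source condition says that whenever $i \notin \des$, the value $i+1$ sits immediately left of $i$, which rigidly constrains how consecutive values are distributed across adjacent columns. Combined with the fact that both tableaux have identical relative orders in every column (from $\st(\tau_1)=\st(\tau_2)$) and identical shape $\alpha$, an induction on the entries $n, n-1, \ldots, 1$ should show the cell occupied by each value is uniquely determined, giving $\tau_1 = \tau_2$. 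This mirrors the structure of \cite[Corollary 6.15]{TvW}, as flagged in the statement.

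The \textbf{main obstacle} I anticipate is the uniqueness argument, specifically verifying that the source condition, together with equality of standardized column words, genuinely determines the placement of every entry and does not leave ambiguity when several consecutive values span more than two columns or interact through the triple condition. The delicate part is tracking how the ``immediate left'' requirement propagates: one must rule out configurations where two distinct source tableaux in the same class place some value in different columns. I expect this to require a careful case analysis on the relative columns of $i$ and $i+1$, leaning on the triple condition (exactly the ingredient that made the first column a possible exception in the proof of Lemma~\ref{lem:piiiplus1}). Existence, by contrast, should be routine once the monovariant is chosen correctly, with the essential input being the second half of Lemma~\ref{lem:switchPRCT}, which is precisely why the statement emphasizes that that half is crucial here.
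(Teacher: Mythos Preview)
Your approach is sound in outline but differs from the paper's route. The paper does not give a self-contained proof; instead, via Remark~\ref{rem:drn}, it ports the argument from \cite[Section~6]{TvW} by introducing \emph{removable nodes} (with a modified definition adapted to $\SPRCT$s) and \emph{distinguished removable nodes}, then invokes the analogues of \cite[Propositions~6.13 and~6.14]{TvW}. That machinery pins down the cell containing the entry $1$ in a source (respectively sink) tableau purely from the standardized column word, and shows that deleting it yields a smaller source (respectively sink) tableau with a determined $\st$; uniqueness and existence then follow by induction on $n$. Your proposal avoids removable nodes entirely: existence comes from iterated swaps via Lemma~\ref{lem:switchPRCT}(2) plus a monovariant, and uniqueness from a top-down reconstruction placing $n, n-1,\ldots,1$. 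The existence half of your argument is more elementary and transparent than the paper's, and a simple monovariant such as $\sum_i i\cdot c(i)$ (with $c(i)$ the column of $i$) does the job. For uniqueness, your reconstruction idea is in the same inductive spirit as the removable-node approach (just running from the top rather than the bottom), but the removable-node framework packages the delicate step---showing the cell of the extremal entry is forced by $\st$ alone---into a clean reusable lemma, whereas your sketch leaves exactly that step to be worked out, as you correctly flag.

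One small correction: in your existence argument you invoke Lemma~\ref{lem:Emodule} to conclude $s_i(\tau)\equiva\tau$ when $i\notin\des(\tau)$, but that lemma concerns $\pi_i$, which equals $\tau$ (not $s_i(\tau)$) in this case. The conclusion is still correct---since $i\notin\des(\tau)$ forces $i$ and $i+1$ into distinct columns, swapping them does not disturb any standardized column word---but you should justify it directly rather than by citing Lemma~\ref{lem:Emodule}.
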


\begin{remark}\label{rem:drn}
	The {proofs} of the various lemmas leading up to the proof of \cite[Corollary 6.15]{TvW} {employ} the notion of removable nodes of a composition diagram followed by distinguished removable nodes in $\SRCT$s.
	The definition of removable nodes in the aforementioned case is inspired by the cover relation for the (left) composition poset {\cite[Section 2.2]{BTvW}}.
	Saturated chains in this poset are in bijection with $\SRCT$s.
	Although we have not described $\SPRCT$s as saturated chains in a poset on compositions, we may still define a removable node as follows.
	Given the composition diagram $\al$, we call a cell in position $(i,j)$ a \emph{removable node} if one of the two conditions below is met.
	\begin{enumerate}
		\item $\alpha_i=j\geq 2$ and there is no part of length $j-1$ strictly north of $\alpha_i$.
		\item $\alpha_i=j=1$.
	\end{enumerate}
	Figure~\ref{fig:removable nodes} shows $\alpha=(2,3,1,4)$ with removable nodes corresponding to cells filled with bullets.
	\begin{figure}[htbp]
		\centering
		\begin{align*}
		{\ytableausetup{mathmode,boxsize=0.8em}
		\begin{ytableau}
		*(white) & \bullet\\
		*(white) & *(white) &*(white)\\
		\bullet\\
		*(white) & *(white) &*(white) & *(white)
		\end{ytableau}}
		\end{align*}
		\caption{The removable nodes of $\al=(2,3,1,4)$.}
		\label{fig:removable nodes}
	\end{figure}
	With this definition of removable node, one can define distinguished removable nodes in precisely the same way as in \cite[Section 6]{TvW}.
	In particular, the analogues of \cite[Propositions 6.13 and 6.14]{TvW}, which are crucial for deriving our Lemma~\ref{lem:unique source sink}, hold in our setting.
	Thus concludes our remark.

\end{remark}

Recall that our motivation  as outlined in the introduction is to enumerate compatible pairs of permutations.
Consider a $\RCT$ $\tau$ of shape $\alpha$ whose $i$-th and $i+1$-th column, for some $i\geq 1$ contain the same number of entries, say $n$.
Then $(\st_i(\tau), \st_{i+1}(\tau))$ is a compatible pair of permutations in $\sgrp{n}$.
Note that if we consider the entries in columns $i$ and $i+1$ only, we can construct a filling of shape $(2^n)$.
As the triple condition is clearly satisfied, this filling is in fact a $\PRCT$.
We infer that compatible pairs of permutations in $\sgrp{n}$ are a subset of the set of pairs of permutations $(\st_1(\tau), \st_2(\tau))$ corresponding to {tableaux}  $\tau \in\PRCT((2^n))$.
In Section~\ref{sec:allowable acyclic}, we establish that this containment is in fact an equality.
We proceed towards studying $\SPRCT((2^n))$ in depth.

\section{Descents in $2$-columned tableaux and ascent-descents on labeled binary trees}\label{sec:ascents-descents-trees}
We refer to $\SPRCT$s of shape $(2^n)$ for some $n\geq 1$ as \emph{$2$-columned tableaux}  of \emph{size} $2n$.
Consider the following refined classification of certain descents in a $2$-columned tableau $\tau$.
Suppose $i$ is an entry in the first column of $\tau$.
Then $i\in \des(\tau)$.
Based on the relative position of $i+1$ in $\tau$, we may define the following sets.
\begin{align*}
\des_N(\tau)&=\{i\suchthat i+1 \text{ is in the first column north of }i\}\\
\des_S(\tau)&=\{i\suchthat i+1 \text{ is in the first column south of }i\}\\
\des_{NE}(\tau)&=\{i\suchthat i+1 \text{ is in the second column northeast of }i\}\\
\des_{SE}(\tau)&=\{i\suchthat i+1 \text{ is in the second column southeast of }{i\}}
\end{align*}
Additionally, we say that $i$ an $N$-descent (respectively $S$-descent) of $\tau$ if $i+1$ is in the first column and north (respectively south) of $i$.
Similarly, we say that $i$ an $NE$-descent (respectively $SE$-descent) of $\tau$ if $i+1$ is in the second column and northeast (respectively southeast) of $i$.
Figure~\ref{fig:descents in tab} shows a $\tau\in \SPRCT((2^n))$ where $n=10$.
The arcs on the left connect $i$ and $i+1$ where both belong to the first column.
In particular, the number of red arcs on the left is equal to  $|\des_{N}(\tau)|$ whereas the number of purple arcs on the left is equal to  $|\des_{S}(\tau)|$.
The arcs on the right connect rows where the $i$ is in the first column and the $i+1$ is in the second column.
In particular, the number of red arcs on the right is equal to  $|\des_{NE}(\tau)|$ whereas the number of purple arcs on the left is equal to  $|\des_{SE}(\tau)|$.

\begin{figure}[htbp]
	\includegraphics[scale=0.80]{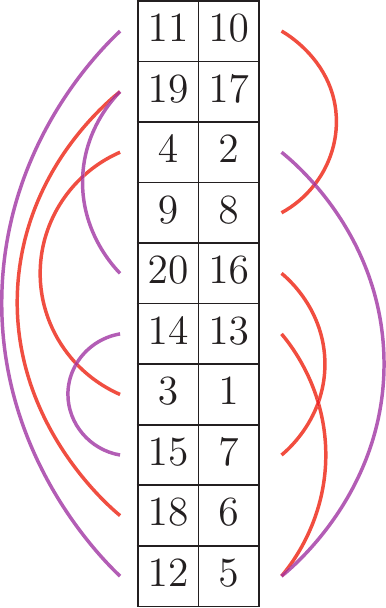}
	\caption{The classification of descents in the first column of {$\tau$.}}
	\label{fig:descents in tab}
\end{figure}

To enumerate ordered pairs $(\st_1(\tau),\st_2(\tau))$ where $\tau$ ranges over $2$-columned tableaux of size $2n$, we need to enumerate  equivalence classes under $\sim_{(2^n)}$.
By Lemma~\ref{lem:unique source sink}, we may equivalently enumerate sink tableaux of shape $(2^n)$.
Such a sink $\tau$  can be alternatively characterized as one
that satisfies $|\des_{NE}(\tau)|=0$.
This motivates our study of the distribution of the quadruple of statistics $$(|\des_N(\tau)|,|\des_S(\tau)|,|\des_{NE}(\tau)|,|\des_{SE}(\tau)|)$$
as $\tau$ varies over all tableaux in $\SPRCT((2^n))$.
We establish that this distribution coincides with the distribution of left ascents, left descents, right ascents and right descents over the set of labeled binary trees on $n$ nodes.
To this end, we need certain combinatorial notions attached to  well-known `Catalan objects' known as Dyck paths.

\subsection{Dyck paths, unlabeled and labeled}\label{subsec:dyck-ldyck}
Given a nonnegative integer $n$, a \emph{Dyck path} of \emph{semi-length} $n$ is a lattice path in the plane beginning at $(0,0)$ and ending at $(2n,0)$ consisting of \emph{up-steps}, which correspond to a translation by {$(1,1)$}, and \emph{down-steps}, which correspond to a translation by $(1,-1)$.
We denote the set of Dyck paths of semi-length $n$ by $\dyck_n$.
The cardinality of $\dyck_n$ is well-known to be ${\Cat}_{n}$, the $n$-th Catalan number equaling $\frac{1}{n+1}\binom{2n}{n}$.
We can identify a Dyck path $D\in \dyck_n$ with a ud-word of length $2n$ by writing  $\upstep$ (respectively $\downstep$) for every up-step (respectively down-step) in $D$ from left to right.
We denote this word by $\dword(D)$ and refer to it as the \emph{Dyck word} of $D$.
A Dyck path is \emph{prime} if it touches the $x$-axis at its initial and terminal points only.
Every Dyck path $D$ can be factorized uniquely as a concatenation of prime Dyck paths $D_1\cdot D_2\cdots D_k$.
We call each $D_i$ a \emph{factor} of $D$.
Figure~\ref{fig:dyck} depicts an element $D\in \dyck_9$ with $4$ factors. We have $\dword(D)=\upstep\downstep \ \upstep\downstep \ \upstep\upstep\upstep\downstep\upstep\downstep\downstep\upstep\downstep\downstep \ \upstep\upstep\downstep\downstep$, where the spaces separate the Dyck words of the factors of $D$.
\begin{figure}[htbp]
	\includegraphics[scale=0.75]{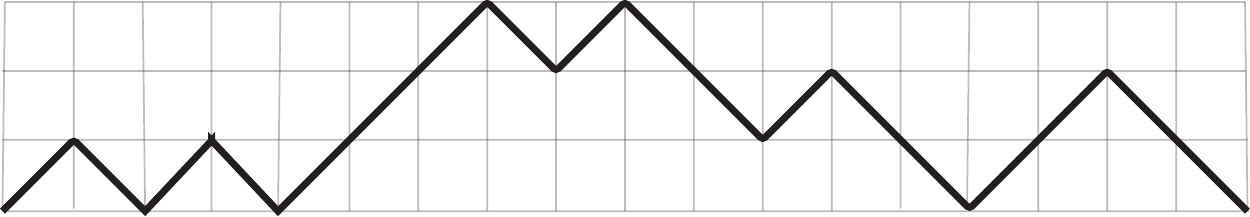}
	\caption{A Dyck path $D$ of semi-length $9$.}
	\label{fig:dyck}
\end{figure}

Consider $D\in \dyck_n$.
If $D$ is endowed with a labeling on its down-steps with distinct positive integers drawn from $[n]$, then we obtain a \emph{labeled Dyck path} $D'$ of semi-length $n$.
We denote the set of labeled Dyck paths of semi-length $n$ by $\ldyck_n$.
The cardinality of $\ldyck_n$ equals $n!\Cat_{n}$.
We define prime labeled Dyck paths and factors of labeled Dyck paths in the obvious manner following the description in the unlabeled case.
Given $D\in \ldyck_n$, we abuse notation and denote by $\dword(D)$ the Dyck word of the underlying unlabeled Dyck path.
Constructing the \emph{labeled Dyck word} $\ldword(D)$ is slightly more involved and is described next.
First we recursively assign labels to the up-steps from right to left.
Consider the $i$-th up-step in $D$ for $i$ from $n$ down to $1$.
Let $\mathfrak{D}_i$ (respectively $\mathfrak{U}_i$) be the set of labels that belong to down-steps (respectively up-steps) after the $i$-th up-step.
Clearly $\mathfrak{U}_{n}=\emptyset$.
Assign the minimum of $\mathfrak{D}_i\setminus \mathfrak{U}_i$ as the label of  this $i$-th up-step.
Note that as the algorithm executes, we always have that $\mathfrak{U}_i$ is a strict subset of $\mathfrak{D}_i$.
Here we are crucially using the fact that $D$ is a (labeled) Dyck path.
Thus, the minimum of $\mathfrak{D}_i\setminus \mathfrak{U}_i$
is well-defined and our procedure terminates when $i$ attains the value $1$.
Next, read the resulting Dyck path, with labels on both up-steps and down-steps, from left to right and write ${\upstep}_{i}$ (respectively ${\downstep}_{i}$) for an up-step (respectively down-step) labeled $i$.
The word thus obtained is the labeled Dyck word $\ldword(D)$.

It is easy to see that for a labeled Dyck path $D$ with prime factorization $D_1\cdots D_k$, we have $$\ldword(D)=\ldword(D_1)\cdots \ldword(D_k).$$
Figure~\ref{fig:ldyck} shows a labeled Dyck path $D$ of semi-length $10$.
The arcs join the up-steps and down-steps that share the same label during the procedure for computing $\ldword(D)$.
We have $$\ldword(D)=
{\upstep}_{7}{\upstep}_{3}{\downstep}_{7}{\downstep}_{3}\hspace{3mm}{\upstep}_{10}{\upstep}_{9}{\upstep}_{8}{\upstep}_{4}{\downstep}_{4}{\upstep}_{1}{\downstep}_{1}{\downstep}_{10}{\upstep}_{6}{\downstep}_{6}{\downstep}_{8}{\upstep}_{5}{{\upstep}_{2}}{\downstep}_{9}{\downstep}_{2}{\downstep}_{5}.$$
\begin{figure}[htbp]
	\includegraphics[scale=0.90]{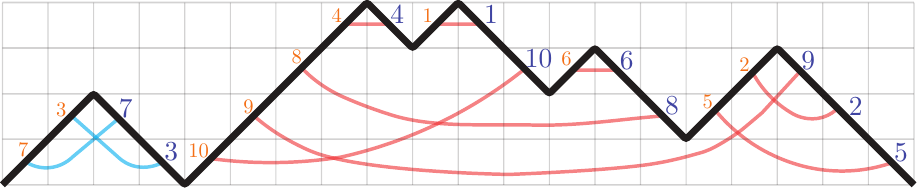}
	\caption{A labeled Dyck path of semi-length $10$.}
	\label{fig:ldyck}
\end{figure}

\subsection{Bijection between $\ldyck_n$ and $\SPRCT((2^n))$}
\label{subsec:ldyck-prct}
Recall  that $|\ldyck_n|=n!\Cat(n)$.
By Theorem~\ref{thm:generalized shift map}, we have that  $|\SPRCT^{\sigma} ((2^n))|={|\SRT((2^n))|}$ for all  $\sigma\in \sgrp{n}$.
The following bijection between $\SRT((2^n))$ and $\dyck_n$ is folklore.
For $T\in \SRT((2^n))$, the $i$-th step for $1\leq i\leq 2n$ in the associated Dyck path is an up-step if $i$ is in the second column of $T$ and it is a down-step otherwise.
Thus, we infer that $|\SPRCT((2^n))|=|\coprod_{\sigma\in \sgrp{n}} \SPRCT^{\sigma}((2^n))|=\sum_{\sigma\in \sgrp{n}} |\SRT((2^n))|=n!\Cat_n$.

By a simple extension of the aforementioned bijection, we obtain one between $\ldyck_n$ and $\SPRCT((2^n))$ {that} associates a  $D\in \ldyck_n$ with $\tau\in \SPRCT((2^n))$ as follows.
For $1\leq i\leq 2n$, the $i$-th step is an up-step if $i$ belongs to the second column.
Otherwise, the $i$-th step is a down-step labeled by the index of the row occupied by $i$ in $\tau$.
We denote this map from $\SPRCT((2^n))$ to $\ldyck_n$ by $\prcttoldyck$ and defer the proof that this is indeed a bijection to after we describe the inverse map, denoted by $\ldycktoprct$, next.

\begin{figure}[htbp]
	\includegraphics[scale=0.9]{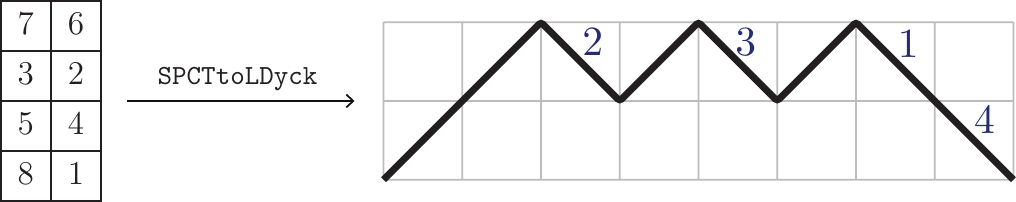}
	\caption{Transforming an SPCT to a labeled Dyck path.}
	\label{fig:spcttoldyck}
\end{figure}

Let $D\in \ldyck_n$ with $\ldword(D)=w_1\cdots w_{2n}$.
Construct a filling $\tau$ of shape $(2^n) $ as follows.
For $1\leq i\leq n$, let $1\leq p<q\leq 2n$ be the positive integers such that $ w_p={\upstep}_{i}$ and $w_q={\downstep}_{i}$.
Then the $i$-th row of $\tau$ contains $q$ and $p$ in the first and  second columns respectively.
We define $\tau$ to be $\ldycktoprct(D)$.
The next lemma establishes  that $\ldycktoprct$ is the {inverse of} $\prcttoldyck$.

\begin{lemma}\label{lem: prct_to_ldyck bijection}
	\emph{$\ldycktoprct$} is a bijection from  $\ldyck_n$ to $\SPRCT((2^n))$ with inverse given by \emph{$\prcttoldyck$}.
\end{lemma}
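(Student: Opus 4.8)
The plan is to leverage the two cardinality counts already in hand: the text establishes $|\ldyck_n| = |\SPRCT((2^n))| = n!\Cat_n$. Consequently it suffices to prove that $\ldycktoprct$ is a well-defined injection into $\SPRCT((2^n))$ satisfying $\prcttoldyck\circ\ldycktoprct = \mathrm{id}_{\ldyck_n}$: an injection between finite sets of equal size is automatically a bijection, and the displayed identity then exhibits $\prcttoldyck$ as its two-sided inverse. This reduces the work to three points, namely that $\ldycktoprct(D)$ is a genuine $\SPRCT$ of shape $(2^n)$, that $\prcttoldyck$ returns a labeled Dyck path, and that $\prcttoldyck\circ\ldycktoprct$ recovers $D$.

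Two of these are routine and I would dispatch them directly from the definitions. For $\prcttoldyck$, writing row $i$ of $\tau\in\SPRCT((2^n))$ as $(f_i,g_i)$ with $f_i$ in the first column and $g_i$ in the second, the row-decrease condition together with distinctness gives $f_i>g_i$; hence for every $m$ the values $f_i\le m$ inject into the values $g_i\le m$ via $f_i\mapsto g_i$, so every prefix of the associated $\upstep/\downstep$ word carries at least as many up-steps as down-steps, and $\prcttoldyck(\tau)\in\ldyck_n$ (the down-step labels being the distinct row indices). For the composition, let $D\in\ldyck_n$ with $\ldword(D)=w_1\cdots w_{2n}$, and let $p_i<q_i$ be the positions of $\upstep_i$ and $\downstep_i$; then row $i$ of $\tau=\ldycktoprct(D)$ is $(q_i,p_i)$, so the second column of $\tau$ is exactly the set of up-step positions of $D$, while the down-step of $\prcttoldyck(\tau)$ at position $q_i$ carries the row index $i$. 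Thus $\prcttoldyck(\tau)$ has the same underlying path and the same down-step labels as $D$, giving $\prcttoldyck\circ\ldycktoprct=\mathrm{id}_{\ldyck_n}$. Well-definedness of the filling also follows, since the description of $\ldword$ guarantees that the up-step labels form a bijection onto $[n]$ with each $\upstep_i$ preceding $\downstep_i$; hence each value of $[2n]$ is placed once and each row $(q_i,p_i)$ strictly decreases.

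The substantive point, and the step I expect to be the main obstacle, is the triple condition for $\tau=\ldycktoprct(D)$. For shape $(2^n)$ with distinct entries it unwinds to a single forbidden pattern: there is no pair $i<j$ with $g_i<g_j<f_i$, equivalently, in terms of positions, no pair $i<j$ with $p_i<p_j<q_i$. So I must show that the matching $i\mapsto\{p_i,q_i\}$ produced by the $\ldword$ labeling obeys $p_i<p_j<q_i\Rightarrow i>j$. Here I would argue straight from the min-rule, which processes up-steps from right to left and assigns to each the smallest as-yet-unclaimed down-step label lying to its right. Assume $p_i<p_j<q_i$. Since $p_j>p_i$, the up-step $p_j$ is processed before $p_i$; as the down-step $q_i$ lies to the right of $p_j$ and its eventual claimant $p_i$ is processed strictly later, the label $i$ is still available when $p_j$ is processed. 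But $p_j$ is assigned the minimum available label, namely $j$, so $j\le i$, and distinctness forces $i>j$. This settles the triple condition, hence $\ldycktoprct(D)\in\SPRCT((2^n))$, and the counting reduction of the first paragraph then finishes the proof.

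I anticipate the only genuine difficulty to be the clean bookkeeping of the min-rule in the last paragraph — in particular confirming that the down-step at $q_i$ is still unclaimed at the moment $p_j$ is processed — whereas the Dyck-path, well-definedness, and composition claims should follow immediately from the definitions.
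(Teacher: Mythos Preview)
Your proposal is correct and follows essentially the same approach as the paper: both arguments establish the triple condition via the min-rule for up-step labels (you phrase it as a direct implication $p_i<p_j<q_i\Rightarrow j<i$, the paper phrases the same observation by contradiction), then combine injectivity with the already-established cardinality $|\ldyck_n|=|\SPRCT((2^n))|=n!\,\Cat_n$, and finally read off $\prcttoldyck\circ\ldycktoprct=\mathrm{id}$ from the definitions. Your write-up is slightly more explicit about why $\prcttoldyck$ lands in $\ldyck_n$ and packages injectivity through the composition identity rather than arguing it separately, but these are cosmetic differences.
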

\begin{proof}
	Consider $D\in \ldyck_n$ and let $\tau=\ldycktoprct(D)$.
	To establish that $\tau$ is an $\SPRCT$, we must show that the triple condition holds.
	Suppose that this is not the case.
	Then we have a configuration in $\tau$ of the type shown in Figure~\ref{fig:triple violation} such that $a>c>b$.
	\begin{figure}[h]
		\centering
		\begin{align*}
		\ytableausetup{mathmode,boxsize=1.25em}
		\begin{ytableau}
		a & b\\
		\none & \none[\vdots]\\
		\none & c
		\end{ytableau}
		\end{align*}
		\caption{A triple configuration in $\ldycktoprct(D)$.}
		\label{fig:triple violation}
	\end{figure}
	We interpret this configuration in terms of $D$.
	Henceforth, in this proof, we assume that the up-steps in $D$ are  labeled using the procedure for computing $\ldword(D)$.

	Assume that $a$ and $b$ belong to row $i$ and that $c$ belongs to row $j$, where $1\leq i<j\leq n$.
	Then  the $a$-th step (respectively $b$-th step) of $D$ is a down-step (respectively up-step) labeled $i$.
	Let $S_b$ and $S_c$ be the sets consisting of labels on the down-steps after the $b$-th and $c$-th steps respectively, ordered in increasing order.
	We have  $S_c\subseteq S_b$ as $c>b$ and  $i\in S_c$ as $a>c$.
	As the $c$-th step, which is an up-step, gets the label $j$, and $j>i$, we conclude that there exists an up-step after the $c$-th step that has the label $i$.
	This follows from the description of our procedure for constructing the labeled Dyck word.
	Thus, we arrive at a contradiction as the $b$-th step cannot be labeled $i$.
	We conclude that the triple condition holds in $\tau$ and that $\tau\in \SPRCT((2^n))$.

	To show that $\ldycktoprct$ is a bijection, we establish its injectivity.
	If $\ldycktoprct(D_1)=\ldycktoprct(D_2)$ for labeled Dyck paths $D_1,D_2$, then we must have $\ldword(D_1)=\ldword(D_2)$.
	This implies that the unlabeled Dyck paths underlying  $D_1$ and $D_2$ are the same. Additionally, the labels on the corresponding down-steps in $D_1$ and $D_2$ are equal.
	Thus, $D_1=D_2$ and we conclude that $\ldycktoprct$ is {a bijection}.

	It is clear that $\prcttoldyck(\tau)=D$ as the entries in the first column of $\tau$ and the indices of the rows they belong to completely determine the down-steps and their labels.
	We conclude that $\prcttoldyck$ and $\ldycktoprct$ are  bijections that are mutually inverse.
\end{proof}
We proceed to describe unlabeled and labeled binary trees and subsequently introduce  descent statistics that are equidistributed with the quadruple $(\des_N,\des_S,\des_{NE},\des_{SE})$.

\subsection{Plane binary trees, unlabeled and labeled}
A \emph{rooted tree} is a finite acyclic graph with a distinguished node called the \emph{root}.
A \emph{plane binary tree} is a rooted tree in which every node has at most two children, of which at most one is called a \emph{left child} and at most one is called a \emph{right child}.
Henceforth, we reserve the term binary tree for a plane binary tree.
We denote the set of binary trees on $n\geq 1$ nodes by $\tree_n$.
The set of nodes of $T\in \tree_n$ is denoted by $V(T)$, and the root node  is referred to as $\rootof{T}$.
We abuse notation on occasion and write $v\in T$ when we mean $v\in V(T)$.
A node $v\in T$ is an \emph{internal node} if it has at least one child.
Otherwise it is a \emph{leaf}.

A \emph{\lpath} is a binary tree $T$ such that no node has a right child.
We can decompose any binary tree $T$ into a set of \lpaths $\{T_1,\ldots,T_k\}$ by omitting all edges that connect an internal node to its right child, provided it exists.
We refer to this set of \lpaths as the \emph{maximal left path  decomposition} of $T$ and denote it by $\mlpd(T)$.
Figure~\ref{fig:Tree_decomposition} depicts a  binary tree $T$ on $10$ nodes with $V(T)={\{v_1,\ldots, v_{10}\}}$.
The nodes that belong to the same shaded region constitute a \lpath in $\mlpd(T)$.

Given $T\in \tree_n$, let $\mlpd(T)$ be {$\{T_1,\ldots, T_m\}$} where $T_1$ contains $\rootof{T}$.
For every $T_i$ where $2\leq i\leq m$,  there exists a unique node $v\in T$ such that $\rootof{T_i}$ is the right child of $v$ in $T$.
We define $v$ to be the \emph{parent} of $T_i$ in $T$ and denote it by $\parent(T_i)$.
In Figure~\ref{fig:Tree_decomposition}, consider the \lpath  on nodes $v_{7}$ and $v_{8}$. Then its parent is $v_6$.
Note the crucial fact that a binary tree $T$ is completely determined by specifying its maximal left path decomposition, the \lpath that contains the root, and the parents of the other left paths.

\begin{figure}[htbp]
	\includegraphics[scale=1.1]{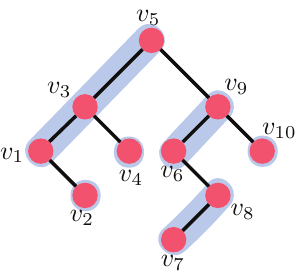}
	\caption{A binary tree with its maximal \lpath decomposition.}
	\label{fig:Tree_decomposition}
\end{figure}

A \emph{labeled plane binary tree} (or simply a \emph{labeled binary tree}) on $n\geq 1$ nodes is a binary tree whose nodes have distinct labels drawn from $[n]$.
We denote the set of labeled binary trees on $n$ nodes for $n\geq 1$ by $\ltree_n$.
Given a node $u\in T$, we refer to its label as $u^{\ell}$.
For a labeled binary tree, we have the following refined classification for its edges.
Suppose that $q$ is the right child of $p$.
If $p^{\ell}< q^{\ell}$, then the edge joining $p$ and $q$ is a \emph{right ascent}.  Otherwise it is a \emph{right descent}.
Now suppose that  $q$ is the left child of $p$.
If $p^{\ell}< q^{\ell}$, then  the edge joining $p$ and $q$ is a \emph{left ascent}. Otherwise it is a \emph{left descent}.
Let $\rasc(T)$ (respectively, $\rdes(T)$, $\lasce(T)$ and  $\ldes(T)$) {be the} number of right ascents (respectively, right descents, left ascents and left descents) in $T$.
For the labeled binary tree $T$ in Figure~\ref{fig:Ascents-Descents},  we have that $\rasc(T)=3$, $\rdes(T)=1$, $\lasce(T)=1$, $\ldes(T)=3$. The thickened edges in the figure correspond to descents, both left and right.

\begin{figure}[ht]
	\includegraphics{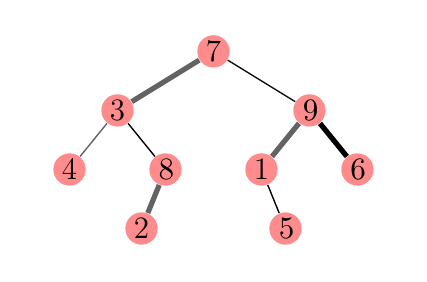}
	\caption{A labeled binary tree with $\rasc(T)\!=\!\!3$, $\rdes(T)\!=\!1$, $\lasce(T)\!=\!1$, $\ldes(T)\!=\!3$.}
	\label{fig:Ascents-Descents}
\end{figure}

We will abuse notation and use the terms  \lpath and maximal \lpath decomposition in the context of their natural labeled analogues respectively.

\subsection{Labeled Dyck paths and labeled binary trees}
\label{subsec:ltree<->ldyck}
Note that since $|\tree _n|=\Cat _n$ the cardinalities of $\ldyck_n$ and $\ltree_n$ are equal.
We seek a bijection between these two sets that tracks the quadruple of statistics $(\lasce,\ldes,\rasc,\rdes)$ on labeled binary trees in a manner that will eventually allow us to relate them to the quadruple {$(|\des_N|,|\des_S|,|\des_{NE}|,|\des_{SE}|)$} via the map $\ldycktoprct$.
First, we need some more terminology pertaining to Dyck paths.

A \emph{run} of $D\in \dyck_n$ is a maximal sequence of down-steps. A \emph{run} of $D\in \ldyck_n$ is defined similarly, except that we retain the labels.
By considering all runs in  $D\in \ldyck_n$ from right to left, we obtain the \emph{run sequence} of $D$.
Consider a run in $D$ composed of $a$ steps.
We can transform this run into a \lpath on $a$ nodes where the label of the nodes read from root to leaf are the labels in the run read from right to left.
Thus, from the run sequence of $D$, say $(R_1,\dots, R_m)$, we obtain a sequence of \lpaths $(T_1,\dots, T_m)$.
This datum can be used to construct a labeled binary tree as described in Algorithm~\ref{alg:ldycktoltree}.
As usual, by the label on an up-step of a labeled Dyck path, we mean the label it acquires when computing its labeled Dyck word.

\begin{algorithm}[htbp]
	\caption{{Transforming} a labeled Dyck path to a labeled binary tree \label{alg:ldycktoltree}}
	\begin{algorithmic}[1]
		\Require{$D\in\ldyck_n$.}
		\Ensure{$T\in\ltree_n$.}
		\vspace{5pt}
		\Function{ \ldycktoltree}{$D$}
		\State \parbox[t]{\dimexpr\linewidth-\algorithmicindent}
		{Let $(R_1,\ldots,R_m)$ be the run sequence of {$D$;}\strut}
		\State \parbox[t]{\dimexpr\linewidth-\algorithmicindent}
		{Let $(T_1,\ldots, T_m)$ be the sequence of \lpaths corresponding to the run sequence;\strut}
		\State Let $T\coloneqq T_1$, $i\coloneqq 2$;
		\While{$ i < m$}
		\State \parbox[t]{\dimexpr\linewidth-\algorithmicindent}{$j\coloneqq $ label on the up-step immediately after the rightmost down-step in $R_i$;\strut}
		\State \parbox[t]{\dimexpr\linewidth-\algorithmicindent}
		{Make the \lpath $T_i$ the right subtree of the node labeled $j$ in $T$;\strut}
		\State Let $T$ be this updated tree;
		\State $i:=i+1$;
		\EndWhile
		\State\Return{$T$};
		\EndFunction
	\end{algorithmic}
\end{algorithm}

Figure~\ref{fig:ldyck->ltree} demonstrates how Algorithm~\ref{alg:ldycktoltree} transforms the labeled Dyck path in Figure~\ref{fig:ldyck} into a labeled tree.
\begin{figure}[htbp]
	\includegraphics{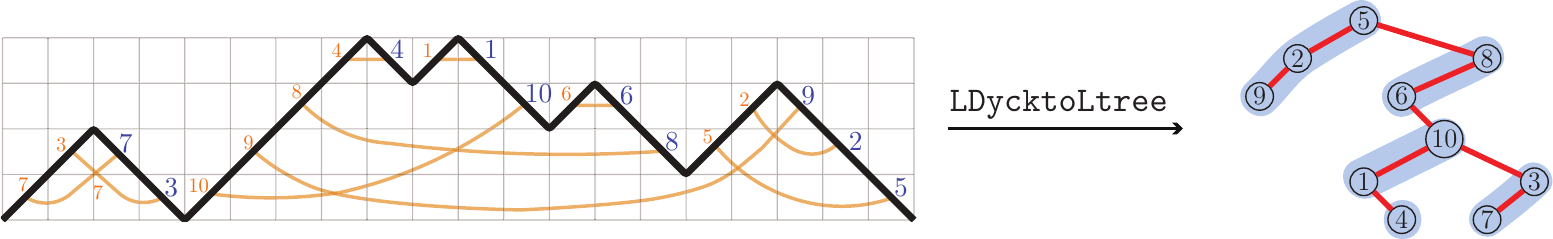}
	\caption{Transforming a labeled Dyck path to a labeled binary tree.}
	\label{fig:ldyck->ltree}
\end{figure}

Note that the run sequence determines the maximal labeled left path decomposition of the resulting binary tree.
To determine its structure completely, we need to designate which \lpath  contains the root of the final binary tree and subsequently determine the parent nodes of the other left paths.
The procedure for computing  the labeled Dyck word accomplishes the latter goal.
The proof that $\ldycktoltree$ is a bijection is postponed until after discussing the inverse map, which is slightly more involved.
We note some key features of Algorithm~\ref{alg:ldycktoltree}. The details, which are straightforward, are left to the reader.

\begin{lemma}\label{lem:two features ldycktoltree}
	Algorithm~\ref{alg:ldycktoltree} posseses the following  features.
	\begin{enumerate}
		\item If $\emph{\ldword}(D)$ has last letter equal to ${\downstep}_i$ for some positive integer $i$, then the root of $\emph{\ldycktoltree}(D)$ has label $i$.
		\item If $\emph{\ldword}(D)$ has a contiguous subword of the type ${\downstep}_{i}{\downstep}_{j}$ for  positive integers $i$ and $j$, then $i$ is the label of the left child of the  node labeled $j$ in $\emph{\ldycktoltree}(D)$.
		\item If $\emph{\ldword}(D)$ has a contiguous subword of the type ${\downstep}_{i}{\upstep}_{j}$ for  positive integers $i$ and $j$, then $i$ is the label of the right child of the node labeled $j$ in $\emph{\ldycktoltree}(D)$.
	\end{enumerate}
\end{lemma}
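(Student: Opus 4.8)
The plan is to prove all three features directly from the mechanics of Algorithm~\ref{alg:ldycktoltree}, since each is a purely local statement about how consecutive steps of $\ldword(D)$ translate into parent--child relations in $\ldycktoltree(D)$. Two observations do all the work. First, a run $R$ on $a$ down-steps is turned into a left path whose labels read from root to leaf are the labels of $R$ read from right to left; consequently, if two down-steps are adjacent in $R$, then the right one is the parent and the left one is its (necessarily left) child in the associated left path, while the rightmost down-step of $R$ becomes the root of that left path. Second, the only modification the while loop ever performs is to graft some left path $T_s$ as the right subtree of an already-present node; this never changes the root of the tree under construction and never alters a parent--left-child relation inside a previously placed left path. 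I would record these two observations at the outset and then treat each feature in turn.

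For feature (1), note that the last letter of $\ldword(D)$ is the last step of $D$, which is a down-step since every Dyck path terminates on the $x$-axis. This step lies in the rightmost run $R_1$ and is its rightmost down-step, so reading $R_1$ from right to left to form $T_1$ makes this letter the root of $T_1$. As $T$ is initialized to $T_1$ and the loop only attaches left paths beneath existing nodes, the root is invariant throughout the algorithm. Hence if the last letter is ${\downstep}_{i}$, then the root of $\ldycktoltree(D)$ has label $i$.

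For feature (2), a contiguous subword ${\downstep}_{i}{\downstep}_{j}$ consists of two consecutive down-steps, so they lie in a common run $R_s$ with the step labeled $i$ immediately to the left of the step labeled $j$. By the first observation, in the left path $T_s$ the node labeled $j$ is the parent of the node labeled $i$, and since $T_s$ is a left path this makes $i$ the label of the left child of $j$. The second observation guarantees that this relation survives in the final tree, giving the claim. For feature (3), a contiguous subword ${\downstep}_{i}{\upstep}_{j}$ forces the step labeled $i$ to be the rightmost down-step of its run $R_s$, because it is immediately followed by an up-step. Since there is an up-step after it, the step labeled $i$ is not the final step of $D$, so $R_s \neq R_1$ and $R_s$ is processed by the while loop. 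At that stage the algorithm reads off $j$ as the label of the up-step immediately after the rightmost down-step of $R_s$ and grafts $T_s$ as the right subtree of the node labeled $j$. Because the rightmost down-step of $R_s$ is exactly the step labeled $i$, the first observation shows the root of $T_s$ is labeled $i$, so $i$ becomes the right child of $j$, as claimed.

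There is no real obstacle here beyond bookkeeping: the only points that require a moment's care are verifying that the root is genuinely invariant under the grafting operation (feature (1)) and confirming in feature (3) that the relevant run is not the root's left path $R_1$, which is exactly what the trailing up-step rules out. Everything else is a direct reading of the reversal convention in the run-to-left-path correspondence, which is why these can reasonably be described as straightforward.
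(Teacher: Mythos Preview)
Your proof is correct and supplies exactly the kind of straightforward verification the paper has in mind: the paper itself does not prove this lemma, stating only that ``the details, which are straightforward, are left to the reader.'' Your two observations\textemdash that runs become left paths with the reversal convention, and that grafting preserves both the root and all prior left-child relations\textemdash are precisely what is needed, and your handling of the case distinction in feature~(3) (ruling out $R_s = R_1$ via the trailing up-step) is the only nontrivial point, which you address correctly.
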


In Algorithm~\ref{alg:ltreetoldyck} we describe a procedure that converts  $T\in \ltree_n$ to the labeled Dyck word of some $D\in \ldyck_n$.
\begin{algorithm}[htbp]
	\caption{{Transforming} a labeled binary tree to a labeled Dyck path\label{alg:ltreetoldyck}}
	\begin{algorithmic}[1]
		\Require{$T\in\ltree_n$.}
		\Ensure{A labeled Dyck word corresponding to a labeled Dyck path in $\ldyck_n$.}
		\vspace{5pt}
		\Function{ \ltreetoldyck}{$T$}
		\State \parbox[t]{\dimexpr\linewidth-\algorithmicindent}{Let $Q\coloneqq \emptyset$, $\DyckWord\coloneqq \varepsilon$, $\NumSteps\coloneqq 0$, {$\mathrm{flag}\coloneqq 1$;}\strut}
		\State \parbox[t]{\dimexpr\linewidth-\algorithmicindent}{Let $C$ be the unique \lpath  in $\mlpd(T)$ containing $\rootof{T}$;\strut}
		\While{$\NumSteps < 2n$}
		\If{$\mathrm{flag}$ is equal to 1}
		\State Let $v_1,\dots,v_m$ be the nodes in $C$ from root to leaf;
		\For{$1\leq j \leq m$}
		\State $Q\coloneqq Q\cup \{v_j\}$; \Comment{Push $i$ into $Q$}
		\State $i\coloneqq $label of node $v_j$;
		\State $\DyckWord\coloneqq {\downstep}_{i}\cdot\DyckWord$;
		\State $\NumSteps\coloneqq \NumSteps+1$;
		\EndFor
		\State $\mathrm{flag}\coloneqq 0$;
		\Else{}
		\State Let $v$ be the node in ${Q}$ whose label $m$ in minimal;
		\State $Q\coloneqq Q\setminus\{v\}$; \Comment{Pop minimum element from $Q$}
		\State $\DyckWord\coloneqq {\upstep}_{m}\cdot \DyckWord$;
		\State $\NumSteps\coloneqq \NumSteps+1$;
		\State Let $v$ be node in $T$ with label $m$;
		\If{$v$ has a right child $v'$ in $T$}
		\State Let $C$ be the unique \lpath in $\mlpd(T)$ rooted at {$v'$;}
		\State $\mathrm{flag}\coloneqq 1$;
		\EndIf
		\EndIf
		\EndWhile
		\State\Return{$\DyckWord$};
		\EndFunction
	\end{algorithmic}
\end{algorithm}
We refer to the sequence of Steps 8, 9, 10 and 11 in Algorithm~\ref{alg:ltreetoldyck} as a (single) \emph{push} operation. The sequence of Steps 14, 15, 16 and 17 comprises a (single) \emph{pop} operation.

\begin{table}[htbp]
	\begin{center}
		\begin{tabular}{| l | l | l |}
			\hline
			$\NumSteps$ &  $Q$ &  Operation\\
			\hline
			$1$ &  $\{5\}$ & Push $5$ into $Q$\\
			\hline
			$2$ &  $\{2,5\}$ & Push $2$ into $Q$\\
			\hline
			$3$ &  $\{2,5,9\}$ & Push $9$ into $Q$ \\
			\hline
			$4$ &  $\{5,9\}$ & Pop $2$ from $Q$\\
			\hline
			$5$ &  $\{9\}$ &  Pop $5$ from $Q$\\
			\hline
			$6$ &  $\{8,9\}$ & Push $8$ into $Q$\\
			\hline
			$7$ &  $\{6,8,9\}$ & Push $6$ into $Q$\\
			\hline
			$8$ &  $\{8,9\}$ & Pop $6$ from $Q$\\
			\hline
			$9$ &  $\{8,9,10\}$ & Push $10$ into $Q$\\
			\hline
			$10$ &  $\{1,8,9,10\}$ & Push $1$ into $Q$ \\
			\hline
			$11$ &  $\{8,9,10\}$ & Pop $1$ from $Q$\\
			\hline
			$12$ &  $\{4,8,9,10\}$ & Push $4$ into $Q$\\
			\hline
			$13$ &  $\{8,9,10\}$ &  Pop $4$ from $Q$\\
			\hline
			$14$ &  $\{9,10\}$ & Pop $8$ from $Q$\\
			\hline
			$15$ &  $\{10\}$ & Pop $9$ from $Q$\\
			\hline
			$16$ &  $\emptyset$ & Pop $10$ from $Q$\\
			\hline
			$17$ &  $\{3\}$ & Push $3$ into $Q$\\
			\hline
			$18$ &  $\{3,7\}$ & Push $7$ into $Q$\\
			\hline
			$19$ &  $\{7\}$ & Pop $3$ from $Q$\\
			\hline$20$ &  $\emptyset$ & Pop $7$ from $Q$\\
			\hline
		\end{tabular}
	\end{center}
	\caption{Algorithm~\ref{alg:ltreetoldyck} applied to the labeled binary tree in Figure~\ref{fig:ldyck->ltree}.}
	\label{tab:push-pop}
\end{table}
Table~\ref{tab:push-pop} demonstrates the execution of Algorithm~\ref{alg:ltreetoldyck} with the labeled binary tree of Figure~\ref{fig:ldyck->ltree} as input.
Reading the sequence of pushes and pop from bottom to top in this table and noting a $\downstep$ for a {push} and a $\upstep$ for a {pop}, along with the appropriate subscript,  we obtain the following $\DyckWord$ as final output.
Note that this is the labeled Dyck word corresponding to the {labeled} Dyck path in Figure~\ref{fig:ldyck->ltree}.
{$$
{\upstep}_{7}{\upstep}_{3}{\downstep}_{7}{\downstep}_{3}{\upstep}_{10}{\upstep}_{9}{\upstep}_{8}{\upstep}_{4}{\downstep}_{4}{{\upstep}_{1}}{\downstep}_{1}{\downstep}_{10}{\upstep}_{6}{\downstep}_{6}{\downstep}_{8}{\upstep}_{5}{\upstep}_{2}{\downstep}_{9}{\downstep}_{2}{\downstep}_{5}
$$}
For the rest of this section, we adhere to the notation introduced in Algorithm~\ref{alg:ltreetoldyck}.
Our next lemma shows that this algorithm is well-defined.
In particular, it establishes that a pop is never performed on an empty $Q$ as the algorithm executes.
The acyclicity of a tree guarantees that any node can be pushed at most once during the execution of Algorithm~\ref{alg:ltreetoldyck}.
Furthermore, once it is pushed, it must take part in exactly one pop operation.
In fact, the following stronger claim holds: Every node gets pushed and popped exactly once during the execution of our algorithm.
This observation, which hinges on the fact that $T$ is connected, is present implicitly in the proof of the next lemma.

\begin{lemma}\label{lem:validity of algorithm}
	In Algorithm~\ref{alg:ltreetoldyck}, there can never be a pop operation when $Q$ is empty.
\end{lemma}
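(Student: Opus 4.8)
The plan is to track, throughout the execution of Algorithm~\ref{alg:ltreetoldyck}, the partition of $V(T)$ into three sets: the nodes already popped, the nodes currently in $Q$ (pushed but not yet popped), and the nodes not yet pushed. First I would record two bookkeeping facts that the text flags as consequences of $T$ being a finite rooted tree. A node $u$ is pushed exactly when the unique left path $T_i\in\mlpd(T)$ containing it is pushed; the left path $T_1\ni\rootof{T}$ is pushed in the very first iteration (since the flag is initialized to $1$), and each $T_i$ with $i\geq 2$ is pushed precisely when $\parent(T_i)$ is popped. Since popping a node requires it to sit in $Q$, a second push of $T_i$ would force a second pop of $\parent(T_i)$, hence a second push of the left path containing $\parent(T_i)$; iterating up the finite, acyclic tree of left paths would force $T_1$ to be pushed twice, which is impossible as $T_1$ has no parent and is only pushed at initialization. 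Thus each node is pushed at most once, and consequently---once popped it never re-enters $Q$---popped at most once.

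The heart of the argument is the following invariant, which I would prove holds at the top of every loop iteration after the first push phase: \emph{every node not yet pushed lies in the right subtree of some node currently in $Q$}. To see this, fix an un-pushed node $u$. Because the first iteration pushes all of $T_1$, the root $\rootof{T}$ has been pushed, so walking up from $u$ toward the root we meet a first pushed node $w$; let $x$ be the node one step below $w$ on this walk, so $x$ is un-pushed. If $x$ were the left child of $w$, then $x$ and $w$ would lie in the same left path, which was pushed together with $w$---contradicting that $x$ is un-pushed. Hence $x$ is the right child of $w$, so $x=\rootof{T_i}$ with $\parent(T_i)=w$; since $T_i$ is un-pushed, $w=\parent(T_i)$ has not been popped, and therefore $w\in Q$. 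In particular $Q$ is nonempty whenever an un-pushed node remains.

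With the invariant in hand I would finish as follows. A pop is executed only in the \textbf{else} branch, i.e.\ when $\mathrm{flag}=0$ and $\NumSteps<2n$; and $\mathrm{flag}=0$ can occur only after the initial push phase, so the invariant applies. Suppose, for contradiction, that $Q=\emptyset$ at such a moment. By the invariant there are no un-pushed nodes, so all $n$ nodes have been pushed---each exactly once, by the first paragraph---and $Q=\emptyset$ forces all of them to have been popped, again each exactly once. Hence exactly $n$ pushes and $n$ pops have occurred, giving $\NumSteps=2n$ and contradicting $\NumSteps<2n$. Therefore $Q$ is never empty when a pop is attempted. This argument simultaneously yields the stronger statement advertised before the lemma: every node is pushed and popped exactly once.

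I expect the main obstacle to be pinning down and verifying the invariant---in particular the case analysis on whether the first pushed ancestor $w$ meets $u$ through a left or a right edge, and the use of connectivity (via the fact that $\rootof{T}$ is pushed first) to guarantee that such a $w$ exists. Once the invariant is phrased correctly, the deduction that an empty $Q$ leaves nothing to push and the final step count are routine.
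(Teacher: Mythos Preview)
Your argument is correct, with one small overstatement: the invariant as you phrase it---that every un-pushed node lies in the right subtree of some node currently in $Q$---can fail at the top of an iteration with $\mathrm{flag}=1$. Immediately after popping a node $v$ that has a right child $v'$, but before the ensuing push, the nodes in the subtree rooted at $v'$ have lost their witness $v$. Your inference ``$T_i$ un-pushed $\Rightarrow$ $\parent(T_i)$ not popped'' is valid precisely when $\mathrm{flag}=0$, and since that is the only branch in which a pop is attempted, your deduction of the lemma is unaffected. It would be cleanest to state the invariant only for iterations with $\mathrm{flag}=0$.

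The paper's proof reaches the same conclusion by a closely related but differently organized route. Rather than maintaining an invariant, it supposes $Q$ has just been emptied by popping some $v$ and splits into cases: if $v$ has a right child the next step is a push, and otherwise it walks \emph{up} through the tree of left paths from a hypothetical un-pushed node, showing that each successive parent left path is also un-pushed and hence distinct from the one containing $v$ and from the one containing the root, producing an impossible infinite ascending chain in a finite tree. Your invariant packages this same walk-up idea once and for all, trading the infinite-chain contradiction for the direct observation that the first pushed ancestor must still be in $Q$; this framing is a bit cleaner and yields the ``each node is pushed and popped exactly once'' statement immediately, while the paper's version avoids having to pin down exactly when the invariant holds.
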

\begin{proof}
	Suppose $Q$ becomes empty at some point during our algorithm upon popping
	the node $v$.
	Let $T_1\in \mlpd(T)$ be the \lpath that contains $v$.
	As $v$ is the last node to be popped from $Q$, we conclude that every other node in $T_1$ except $v$ has been popped from $Q$ before $v$.
	If $v$ has a right child $v_0$ in $T$, then the next step of our algorithm, after setting  flag  equal to $1$, pushes all nodes that belong to the  \lpath in $\mlpd(T)$ containing $v_0$ into {$Q$.
	This} ensures that the next  pop is performed on a nonempty $Q$.

	Assume henceforth that $v$ does not have a right child in $T$.
	We claim that $\NumSteps$ equals $2n$ upon performing the pop that removes $v$ from $Q$, thereby implying that the algorithm terminates subsequently.
	To establish this claim, we proceed by contradiction.
	Suppose $\NumSteps$ is strictly less than $2n$ upon popping $v$.
	Then there exists a node $v' \in T$ that has not been pushed into $Q$ yet.
	For if there was no such node, {then every} node in $T$ would have been pushed and popped from $Q${, since $Q$ becomes empty upon popping $v$}, contradicting our premise that $\NumSteps<2n$.

	Let  $T_1'\in \mlpd(T)$ be the \lpath containing $v'$.
	As $v'$ has not been pushed into $Q$ yet, we infer that no node in  $T_1'$ has been pushed into $Q$.
	In particular, $\rootof{T}$ does not belong to $T_1'$ and $T_1'\neq T_1$.
	Let $T_2'\in \mlpd(T)$ be the \lpath that contains  $\parent(T_1')$.
	As no node of $T_1'$ has been pushed into $Q$, we infer that no element in $T_2'$ has been pushed into $Q$ either.
	As before, we have that $\rootof{T}$ is not in $T_2'$ and $T_2'\neq T_1$.
	Continuing this inductively we obtain an infinite sequence of distinct \lpaths $T_1',T_2',\dots$ such that $T_{i+1}'\in \mlpd(T)$ contains $\parent(T_i')$ for $i\geq 1$.
	Each of these \lpaths contains nodes that have not been pushed into $Q$.
	As $V(T)$ is finite, we have a contradiction.
\end{proof}

We note an important consequence of Lemma~\ref{lem:validity of algorithm}.
At every stage during the execution of Algorithm~\ref{alg:ltreetoldyck}, the number of pushes weakly exceeds the number of pops.
Additionally, every time a pop empties $Q$, the number of pushes equals the number of pops up until that stage.
Conversely, if the number of pops equals the number of pushes at any stage, then $Q$ must be empty.
These observations imply that in every suffix of $\DyckWord$, there are at least many $\downstep$'s as there are $\upstep$'s.
In summary, we have the following.

\begin{corollary}\label{cor:output is a dyck path}
	$\chi(\DyckWord)$ is the Dyck word of a Dyck path.
\end{corollary}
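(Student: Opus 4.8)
The plan is to verify directly the two defining properties of a Dyck word, relying on the bookkeeping already in place. First I would recall that in Algorithm~\ref{alg:ltreetoldyck} each push prepends a $\downstep$ to $\DyckWord$ (Step~10) while each pop prepends an $\upstep$ (Step~16); hence, after applying $\chi$, pushes become down-steps and pops become up-steps. Since $T$ has $n$ nodes and, as noted just before Lemma~\ref{lem:validity of algorithm}, every node is pushed exactly once and popped exactly once, $\chi(\DyckWord)$ consists of precisely $n$ down-steps and $n$ up-steps. In particular it has length $2n$ and the associated lattice path returns to height $0$, matching the required terminal point $(2n,0)$.

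It then remains to check that the path never dips below the $x$-axis, that is, that every prefix of $\chi(\DyckWord)$ contains at least as many up-steps as down-steps. The observations recorded immediately before the corollary state that every suffix of $\DyckWord$ has at least as many $\downstep$'s as $\upstep$'s; this is exactly the translation of ``the number of pushes weakly exceeds the number of pops at every stage,'' since the earliest operations are prepended and therefore land at the right end of $\DyckWord$. Writing $u(X)$ and $d(X)$ for the number of $\upstep$'s and $\downstep$'s in a word $X$, I would then complement: for a prefix $P$ of $\DyckWord$ with complementary suffix $S$,
\[
u(P)+u(S)=d(P)+d(S)=n,
\]
and the suffix inequality $d(S)\geq u(S)$ forces $u(P)\geq d(P)$. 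Reading the path from left to right, the height after the prefix $P$ equals $u(P)-d(P)\geq 0$, so the path stays weakly above the $x$-axis throughout.

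Together these two facts---terminal height $0$ and nonnegativity of every partial height---are precisely the conditions defining a Dyck word of semi-length $n$, whence $\chi(\DyckWord)=\dword(D)$ for some $D\in\dyck_n$. I expect no genuine obstacle here: Lemma~\ref{lem:validity of algorithm} and the stacking observations preceding the corollary have already done the substantive work. The only point demanding care is the bookkeeping of conventions, namely that prepending reverses the temporal order (so a temporal prefix of the operation sequence corresponds to a suffix of $\DyckWord$) and that the sign convention $\upstep=(1,1)$ makes up-steps the height-raising steps; once these are pinned down, the complementation argument is immediate.
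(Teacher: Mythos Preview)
Your argument is correct and follows the same approach as the paper: the paragraph immediately preceding the corollary already records that every suffix of $\DyckWord$ has at least as many $\downstep$'s as $\upstep$'s, and this together with the equal count of $\upstep$'s and $\downstep$'s is precisely the characterization of a Dyck word. You simply make the passage from the suffix condition to the usual prefix condition explicit via complementation, which the paper leaves implicit.
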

Our next aim is to establish the stronger assertion that $\DyckWord$ is the labeled Dyck word of some labeled Dyck path.
Let $D^u$ denote the Dyck path corresponding to $\chi(\DyckWord)$.
Let $D^{\ell}$ denote the labeled Dyck path whose underlying unlabeled Dyck path is $D^u$ and whose labels on the down-steps from left to right are derived from the indices of the $\downstep$'s in $\DyckWord$ from left to right.
\begin{lemma}
	$\DyckWord$ equals $\emph{\ldword}(D^{\ell})$.
\end{lemma}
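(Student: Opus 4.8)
The plan is to show that the labeling on the up-steps of $D^{\ell}$ produced by the canonical $\ldword$-procedure coincides with the up-step labels recorded in $\DyckWord$ by Algorithm~\ref{alg:ltreetoldyck}. By construction $D^{\ell}$ has the correct underlying unlabeled Dyck path (Corollary~\ref{cor:output is a dyck path}) and the correct down-step labels, so the only thing left to verify is that the up-step labels agree. The down-step labels are a rearrangement of $[n]$ because Lemma~\ref{lem:validity of algorithm} (and the remark preceding it) guarantees every node is pushed exactly once, and a push writes exactly one ${\downstep}_i$. Hence $\DyckWord$ is a well-formed labeled ud-word on $D^u$, and the claim reduces to matching two labelings of the up-steps.

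First I would recall the canonical procedure: to compute $\ldword(D^{\ell})$ one scans up-steps from right to left, and the $k$-th up-step (counting from the right) receives the label $\min(\mathfrak{D}_k \setminus \mathfrak{U}_k)$, where $\mathfrak{D}_k$ and $\mathfrak{U}_k$ are the labels on down-steps and already-labeled up-steps lying to the right of that up-step. Next I would reinterpret Algorithm~\ref{alg:ltreetoldyck} through this right-to-left lens: since $\DyckWord$ is built by prepending each new step, reading $\DyckWord$ from left to right corresponds to reading the sequence of push/pop operations from \emph{last to first}. A pop writes ${\upstep}_m$ where $m$ is the minimum label currently in $Q$, and $Q$ at that moment holds exactly the nodes that have been pushed but not yet popped. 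The key bookkeeping observation is that, reading the operation sequence in reverse (equivalently, scanning $\DyckWord$ left to right, i.e.\ up-steps from right to left), a label lies in $Q$ precisely when it has appeared on a down-step to the right but not yet on an up-step to the right. In the canonical-procedure notation this says exactly that the contents of $Q$ at the moment of a pop equal $\mathfrak{D}_k \setminus \mathfrak{U}_k$ for the corresponding up-step.

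The heart of the argument is therefore the following correspondence, which I would state and prove as the main step:
\begin{equation}\label{eq:Q-equals-difference}
Q \text{ (just before the pop producing the $k$-th up-step from the right)} \;=\; \mathfrak{D}_k \setminus \mathfrak{U}_k.
\end{equation}
To prove \eqref{eq:Q-equals-difference} I would induct on $k$ (equivalently on the position in $\DyckWord$, scanning left to right). A node is in $Q$ at a given stage if and only if its push has occurred but its pop has not; since reversing the operation order sends pushes to the down-steps to the right and pops to the up-steps to the right, membership in $Q$ translates verbatim into membership in $\mathfrak{D}_k \setminus \mathfrak{U}_k$. Granting \eqref{eq:Q-equals-difference}, the pop takes the minimum of $Q$, which is $\min(\mathfrak{D}_k \setminus \mathfrak{U}_k)$, and this is precisely the label assigned by the canonical $\ldword$-procedure to that same up-step; hence every up-step label agrees and $\DyckWord = \ldword(D^{\ell})$.

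**The main obstacle** I anticipate is purely the translation between the two orientations: Algorithm~\ref{alg:ltreetoldyck} processes the tree and \emph{prepends} steps to $\DyckWord$, whereas the canonical labeling scans the finished path from the right. One must argue carefully that the stack-like set $Q$ at pop-time is genuinely the set $\mathfrak{D}_k \setminus \mathfrak{U}_k$, and in particular that no label is popped ``too early'' relative to its down-step partner so that the minima computed by the two procedures really coincide at each step. This is where Lemma~\ref{lem:validity of algorithm}, ensuring $Q$ is never popped when empty and that every node is pushed and popped exactly once, does the real work: it certifies that the correspondence in \eqref{eq:Q-equals-difference} is between two sets of the same size built from the same label set, so the inductive matching of minima goes through without a parity or availability obstruction.
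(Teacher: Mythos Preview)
Your proposal is correct and follows essentially the same approach as the paper's proof. Both arguments reduce to the observation that, at the moment of a pop, the set $Q$ coincides with the set of down-step labels already written minus the set of up-step labels already written (your equation~\eqref{eq:Q-equals-difference}, the paper's $A\setminus B$), so the minimum taken by the pop agrees with the minimum taken by the canonical $\ldword$-procedure; you are slightly more explicit about the right-to-left induction, but the content is identical.
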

\begin{proof}
	Note that $\chi(\DyckWord)$ equals $\dword(D^{\ell})$.
	Furthermore, by construction, the indices of the $\downstep$'s read from left to right in $\ldword(D^{\ell})$ coincide with the indices of the $\downstep$'s read from left to right in $\DyckWord$.
	Therefore, all we need to show is that the index of a $\upstep$ in $\DyckWord$ matches that of the corresponding $\upstep$ in $\ldword(D^{\ell})$.

	Let $W_{old}$ be the value of $\DyckWord$ at an intermediate stage during the execution of the algorithm just prior to a pop operation.
	Let $W_{new}$ be the value of $\DyckWord$ immediately after a single pop.
	Then $W_{new}={\upstep}_i \cdot W_{old}$ where $i$ is the minimum label amongst all nodes in $Q$ before the pop was performed.
	We discuss an alternate way to describe this value $i$.
	Let $A$ (respectively $B$) be the set of indices of $\downstep$s (respectively $\upstep$s) in $W_{old}$.
	Then $A$ consists of labels of the nodes that have been pushed into $Q$ (up until the intermediate stage we are considering), whereas $B$ consists of labels of nodes that have been both pushed and popped.
	Then $A\setminus B$ contains all labels corresponding to nodes in $Q$ before the pop.
	Thus, we infer that $i$ is the minimum element in $A\setminus B$.

	Now consider the sub-path of $D^{\ell}$ that corresponds to $W_{old}$.
	The labels on the down-steps are precisely the elements of $A$.
	The sub-path of $D^{\ell}$ corresponding to $W_{new}$ begins with an up-step. The preceding procedure where we computed the labels on up-steps  coincides precisely with our description of the procedure that endows up-steps with labels during the computation of the labeled Dyck word.
	This finishes the proof.
\end{proof}

In other words, Algorithm~\ref{alg:ltreetoldyck} outputs the labeled Dyck word of  $D^{\ell}$ given a labeled binary tree $T$.
We note certain features of this algorithm that are immediate from its description.
\begin{lemma}\label{lem:two features ltreetoldyck}
	Algorithm~\ref{alg:ltreetoldyck} possesses the following three features.
	\begin{enumerate}
		\item If the label of $\rootof{T}$ is $i$, then the last letter in $\DyckWord$ is ${\downstep}_i$.
		\item If $i$ is the label of the left child of the  node labeled $j$ in $T$, then $\DyckWord$ has a contiguous subword of the type ${\downstep}_{i}{\downstep}_{j}$.
		\item If $i$ is the label of the right child of the  node labeled $j$ in $T$, then $\DyckWord$ has a contiguous subword of the type ${\downstep}_{i}{\upstep}_{j}$.
	\end{enumerate}
\end{lemma}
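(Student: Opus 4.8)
The plan is to read all three features directly off the two operations of Algorithm~\ref{alg:ltreetoldyck}, using the single structural observation that $\DyckWord$ is assembled by \emph{prepending}: each push prepends a letter $\downstep_k$, where $k$ is the label of the pushed node, and each pop prepends a letter $\upstep_k$, where $k$ is the label of the popped node. Consequently, if an operation $A$ is performed immediately before an operation $B$, then in the final word the letter contributed by $B$ sits immediately to the \emph{left} of the letter contributed by $A$. Keeping this reversal straight is essentially the only bookkeeping required, and each of the three claims then amounts to identifying which operation immediately precedes which.

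For part (1), I would observe that $\rootof{T}$ is the first node of the \lpath $C$ containing the root, which is precisely the \lpath processed in the very first pass through the flag-$1$ branch. Hence the push of $\rootof{T}$, prepending $\downstep_i$, is the very first operation performed; since every later operation only prepends, this $\downstep_i$ remains rightmost throughout, so it is the last letter of $\DyckWord$. For part (2), suppose $w$ (label $i$) is the left child of $u$ (label $j$). The edge $u\to w$ is a left-child edge, so $u$ and $w$ lie in a common \lpath of $\mlpd(T)$ with $w$ the immediate successor of $u$ when that path is read from root to leaf. The flag-$1$ branch pushes the nodes of this path consecutively in exactly that order, so the push of $u$ (prepending $\downstep_j$) is immediately followed by the push of $w$ (prepending $\downstep_i$); by the reversal observation, $\downstep_i\downstep_j$ occurs as a contiguous subword.

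For part (3), suppose $w$ (label $i$) is the right child of $u$ (label $j$). Then $w$ is the root of a distinct \lpath $C'$ of $\mlpd(T)$, and $u$ is the unique node of $T$ whose right child equals $\rootof{C'}$. In the algorithm the flag is reset to $1$, queuing a new \lpath for pushing, only when a pop removes a node that possesses a right child; thus $C'$ can be launched only by the pop of $u$. Using the fact, recorded just before Lemma~\ref{lem:validity of algorithm}, that every node is pushed and popped exactly once during the execution, $w$ is eventually pushed, so $C'$ is indeed processed, and the push of its root $w$ (prepending $\downstep_i$) is the operation immediately following the pop of $u$ (prepending $\upstep_j$). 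The reversal observation then gives the contiguous subword $\downstep_i\upstep_j$. The one delicate point — the main obstacle, such as it is — is exactly this synchronization in part (3): one must verify that no operation intervenes between the pop of $u$ and the push of $w$, which holds because the flag-$1$ branch is entered on the iteration immediately after a pop that exposes a right child, and that branch begins by pushing the root of the newly selected \lpath.
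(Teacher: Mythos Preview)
Your argument is correct and is precisely the kind of direct reading of the algorithm the paper has in mind; the paper does not give a separate proof but simply declares these features ``immediate from its description,'' and your write-up spells out that immediacy carefully. The key bookkeeping observation---that prepending reverses the temporal order of operations into the left-to-right order of letters---is exactly what makes each of the three claims transparent, and you have identified the only nontrivial synchronization point (in part (3)) and handled it correctly.
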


The reader is invited to compare the statement of Lemma~\ref{lem:two features ltreetoldyck} to that of Lemma~\ref{lem:two features ldycktoltree}.
This comparison naturally leads us to our next theorem.
From this point on, we identify $\ltreetoldyck(T)$ with the labeled Dyck path $D^{\ell}$ instead of $\ldword(D^{\ell})$.
Our next result establishes that $\ltreetoldyck$ is a bijection and its inverse is the map $\ldycktoltree$.

\begin{theorem}
	$\emph{\ltreetoldyck}: \ltree_n \to \ldyck_n$ is a bijection and its inverse is given by $\emph{\ldycktoltree}$.
\end{theorem}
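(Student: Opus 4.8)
The plan is to show that the two algorithms $\ltreetoldyck$ and $\ldycktoltree$ are mutually inverse by exploiting the structural dictionaries already established in Lemmas~\ref{lem:two features ldycktoltree} and \ref{lem:two features ltreetoldyck}. Since both $\ltree_n$ and $\ldyck_n$ have cardinality $\Cat_n \cdot n!$, it suffices to prove that one composition, say $\ldycktoltree \circ \ltreetoldyck$, is the identity on $\ltree_n$; injectivity of $\ltreetoldyck$ then follows, and equicardinality upgrades this to a bijection with the stated inverse.

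\textbf{Reconstructing the tree from its Dyck word.} First I would observe that a labeled binary tree $T \in \ltree_n$ is completely determined by three pieces of local data: the label of its root, the left-child relation, and the right-child relation (equivalently, the maximal left-path decomposition together with the parent assignments). Indeed, every non-root node is either a left child or a right child of a unique node, so knowing all such parent-edges reconstructs $T$ uniquely. The strategy is to verify that the contiguous-subword signatures recorded in the two lemmas match up perfectly. Concretely, Lemma~\ref{lem:two features ltreetoldyck} tells us that applying $\ltreetoldyck$ to $T$ produces $\DyckWord = \ldword(D^{\ell})$ in which (i) the root label $i$ appears as the final letter ${\downstep}_i$, (ii) each left-child edge $(j \text{ parent of } i)$ produces a contiguous ${\downstep}_{i}{\downstep}_{j}$, and (iii) each right-child edge produces a contiguous ${\downstep}_{i}{\upstep}_{j}$. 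Lemma~\ref{lem:two features ldycktoltree} reads exactly these three signatures in reverse, recovering the root, the left children, and the right children of $\ldycktoltree(D^{\ell})$. Thus $\ldycktoltree(\ltreetoldyck(T))$ has the same root, the same left-child relation, and the same right-child relation as $T$, whence the two trees coincide.

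\textbf{The step requiring care.} The main obstacle is justifying that the three local signatures in Lemma~\ref{lem:two features ltreetoldyck} are \emph{exhaustive}: that is, every letter-adjacency ${\downstep}_i{\downstep}_j$ or ${\downstep}_i{\upstep}_j$ in $\DyckWord$ genuinely arises from a parent-edge of $T$ and not as an incidental collision of unrelated push/pop operations. Here I would argue using the stack discipline of the $Q$ made explicit in Lemma~\ref{lem:validity of algorithm} and the discussion following Corollary~\ref{cor:output is a dyck path}: reading $\DyckWord$ from right to left, a $\downstep_i$ immediately followed (to its left, i.e.\ occurring later in the reversed push/pop order) by a $\downstep_j$ means $j$ was pushed onto $Q$ in the same left-path sweep immediately after $i$, forcing $j$ to be the parent of $i$ along a left edge; a $\downstep_i$ followed by $\upstep_j$ means the pop of $j$ triggered the flag and opened the left-path rooted at $i$'s line, i.e.\ $i = \rootof{C}$ is the right child of $j$. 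Because every node is pushed and popped exactly once, each non-root node contributes exactly one such determining adjacency, so the signatures account for all $n-1$ edges plus the root.

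\textbf{Conclusion.} Having shown $\ldycktoltree \circ \ltreetoldyck = \mathrm{id}_{\ltree_n}$, the map $\ltreetoldyck$ is injective. Since $|\ltree_n| = \Cat_n \cdot n! = |\ldyck_n|$, it is a bijection, and its two-sided inverse is necessarily $\ldycktoltree$. This is precisely the assertion of the theorem.
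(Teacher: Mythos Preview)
Your proof is correct and follows essentially the same approach as the paper: use equicardinality of $\ltree_n$ and $\ldyck_n$, then verify $\ldycktoltree \circ \ltreetoldyck = \mathrm{id}$ by matching the three local signatures in Lemmas~\ref{lem:two features ldycktoltree} and~\ref{lem:two features ltreetoldyck}. The paper's proof is a terse two lines invoking exactly these lemmas; your version is more explicit about why the signatures determine the tree, and your ``care'' paragraph addresses a point the paper leaves implicit (namely that the $n-1$ adjacencies following down-steps in $\DyckWord$ exhaust the $n-1$ edges of $T$, so no spurious edges arise). One small slip in that paragraph: when you trace a contiguous ${\downstep}_j{\downstep}_i$ through the push order, you conclude ``$j$ is the parent of $i$,'' but the direction is reversed---since $i$ is prepended first (hence pushed first, hence closer to the root of the left path), it is $i$ that is the parent of $j$; this matches Lemma~\ref{lem:two features ldycktoltree}(2) as stated and does not affect the validity of your main argument.
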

\begin{proof}
	As $\ldyck_n$ and $\ltree_n$ have the same cardinality, to establish the claim it suffices to show that  $\ldycktoltree\circ \ltreetoldyck$ is the identity map.
	Let $D^{\ell}$ be the labeled Dyck path corresponding to the labeled Dyck word  output by Algorithm~\ref{alg:ltreetoldyck} for some $T\in \ltree_n$.
	From Lemmas~\ref{lem:two features ldycktoltree} and \ref{lem:two features ltreetoldyck}, it follows that $\ldycktoltree(D^{\ell})=T$.
	This finishes the proof.
\end{proof}
Figure~\ref{fig:master figure} should aid the reader in understanding our next theorem.
\begin{theorem}\label{thm:stat-preserving bijection}
	$ \emph{\ldycktoltree}\circ \emph{\prcttoldyck}: \SPRCT((2^n)) \to \ltree_n$ is a bijection that maps the ordered quadruple of statistics $(|\des_N|,|\des_S|,|\des_{NE}|,|\des_{SE}|)$ to $(\lasce,\ldes,\rasc,\rdes)$.
\end{theorem}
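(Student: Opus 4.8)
The bijectivity is immediate, since $\prcttoldyck$ is a bijection by Lemma~\ref{lem: prct_to_ldyck bijection} and $\ldycktoltree$ is a bijection by the preceding theorem; hence the composite is one as well. The plan for the statistic-matching is to read both quadruples off a single labeled Dyck word. Fix $\tau\in\SPRCT((2^n))$, put $D\coloneqq\prcttoldyck(\tau)$ and $T\coloneqq\ldycktoltree(D)$, and write $\ldword(D)=w_1\cdots w_{2n}$. By the definition of $\prcttoldyck$ --- equivalently, by reconstructing $\tau=\ldycktoprct(D)$ through Lemma~\ref{lem: prct_to_ldyck bijection} --- the entry $k$ of $\tau$ lies in the first column in row $r$ exactly when $w_k={\downstep}_{r}$, and in the second column in row $r$ exactly when $w_k={\upstep}_{r}$. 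This dictionary is the only bridge I need.

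Next I would identify the first-column descents of $\tau$ with the edges of $T$. Because the first column is leftmost, a first-column entry $i$ is a descent precisely when $i\leq 2n-1$, so the first-column descents are exactly the down-steps of $\ldword(D)$ at positions $<2n$. Since $w_{2n}$ is always a down-step, there are $n-1$ such, matching the $n-1$ edges of $T$. By Lemma~\ref{lem:two features ldycktoltree}(2),(3), each contiguous subword $w_iw_{i+1}$ beginning with a down-step encodes a tree edge --- a left edge when $w_{i+1}$ is a down-step, a right edge when $w_{i+1}$ is an up-step --- and every edge arises this way. Thus the first-column descents of $\tau$ correspond bijectively to the edges of $T$.

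It then remains to match the signs case by case. Write $w_i={\downstep}_{r}$, so that $r$ is the row of $i$, and let $r'$ be the label of $w_{i+1}$, i.e.\ the row of $i+1$; note $r\neq r'$, since if $w_{i+1}$ is a down-step then $i$ and $i+1$ lie in distinct cells of the first column, while if $i+1$ occupied the second-column cell of row $r$ the weak decrease along that row would force the contradiction $i\geq i+1$. If $w_{i+1}={\downstep}_{r'}$ then $i+1$ is in the first column and, by Lemma~\ref{lem:two features ldycktoltree}(2), $r$ is the left child of $r'$ in $T$; here $r'<r$ means $i+1$ lies north of $i$, so $i\in\des_N$ and the edge is a left ascent, whereas $r'>r$ gives $i\in\des_S$ and a left descent. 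If $w_{i+1}={\upstep}_{r'}$ then $i+1$ is in the second column and, by Lemma~\ref{lem:two features ldycktoltree}(3), $r$ is the right child of $r'$; now $r'<r$ means $i+1$ lies northeast of $i$, so $i\in\des_{NE}$ and the edge is a right ascent, whereas $r'>r$ gives $i\in\des_{SE}$ and a right descent. Summing over the bijection of the previous paragraph gives $|\des_N(\tau)|=\lasce(T)$, $|\des_S(\tau)|=\ldes(T)$, $|\des_{NE}(\tau)|=\rasc(T)$, and $|\des_{SE}(\tau)|=\rdes(T)$, as required.

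The only genuine difficulty I anticipate is bookkeeping with conventions: confirming which endpoint of a tree edge is the parent, that an ascent means the parent label is smaller than the child label, and that ``north'' and ``northeast'' correspond to the smaller row index; together with the boundary letter $w_{2n}$, which encodes $\rootof{T}$ and contributes no first-column descent. Once these are pinned down, the theorem becomes a direct translation through the dictionary of the first paragraph and Lemma~\ref{lem:two features ldycktoltree}.
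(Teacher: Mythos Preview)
Your proposal is correct and follows essentially the same approach as the paper: both arguments pass through the labeled Dyck word $\ldword(D)$, use the dictionary that $w_k={\downstep}_r$ (resp.\ ${\upstep}_r$) records that the entry $k$ lies in the first (resp.\ second) column in row $r$, and then invoke Lemma~\ref{lem:two features ldycktoltree} to translate each contiguous pair $w_iw_{i+1}$ with $w_i$ a down-step into a tree edge whose left/right type and ascent/descent type are determined by the type and subscript of $w_{i+1}$. Your write-up is in fact slightly more careful than the paper's in two places---you explicitly match the count $n-1$ of first-column descents to the number of edges of $T$, and you verify $r\neq r'$---but the substance is the same.
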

\begin{proof}
	It is clear that $\ldycktoltree\circ \prcttoldyck$ is a bijection.
	We focus on proving the second half of the theorem.
	Consider $\tau\in \SPRCT((2^n))$.
	Let $D$ be $\prcttoldyck(\tau)$ and let $T$ be $\ldycktoltree(D)$.
	Suppose first that  $i$ and $i+1$ both belong to the first column  in the $p$-th and  $q$-th row   in $\tau$ respectively.
	Then the $i$-th step of $D$ is a down-step labeled $p$ and the $i+1$-th step is down-step labeled $q$.
	In terms of $\ldword(D)$, this corresponds to an instance of a contiguous subword of the type ${\downstep}_p{\downstep}_q$.
	Lemma~\ref{lem:two features ldycktoltree} implies that $p$ is the label of the left child of the node labeled $q$ in $T$.
	If $i\in \des_N(\tau)$, then $p>q$ and
	the edge in $T$ joining the nodes labeled $p$ and $q$ is a left ascent.
	Thus we infer that {$|\des_N(\tau)|=\lasce(T)$}. A similar argument proves that {$|\des_S(\tau)|=\ldes(T)$}.

	Now suppose that  $i$ belongs to the first column in the $p$-th row while $i+1$ belongs to the second column in the $q$-th row  in $\tau$.
	Then the $i$-th step of $D$ is a down-step labeled $p$ and the $i+1$-th step is an up-step labeled $q$.
	In terms of $\ldword(D)$, this corresponds to an instance of a contiguous subword  ${\downstep}_p{\upstep}_q$.  Lemma~\ref{lem:two features ldycktoltree} implies that the node labeled $p$ is the right child of the node labeled $q$ in $T$.
	If $i\in \des_{NE}(\tau)$, then $p>q$ and the edge joining the nodes labeled $p$ and $q$ is a right ascent.
	Thus, we infer that {$|\des_{NE}(\tau)|=\rasc(T)$}. A similar argument proves that {$|\des_{SE}(\tau)|=\rdes(T)$}.
\end{proof}
\begin{figure}[htbp]
	\includegraphics[scale=0.9]{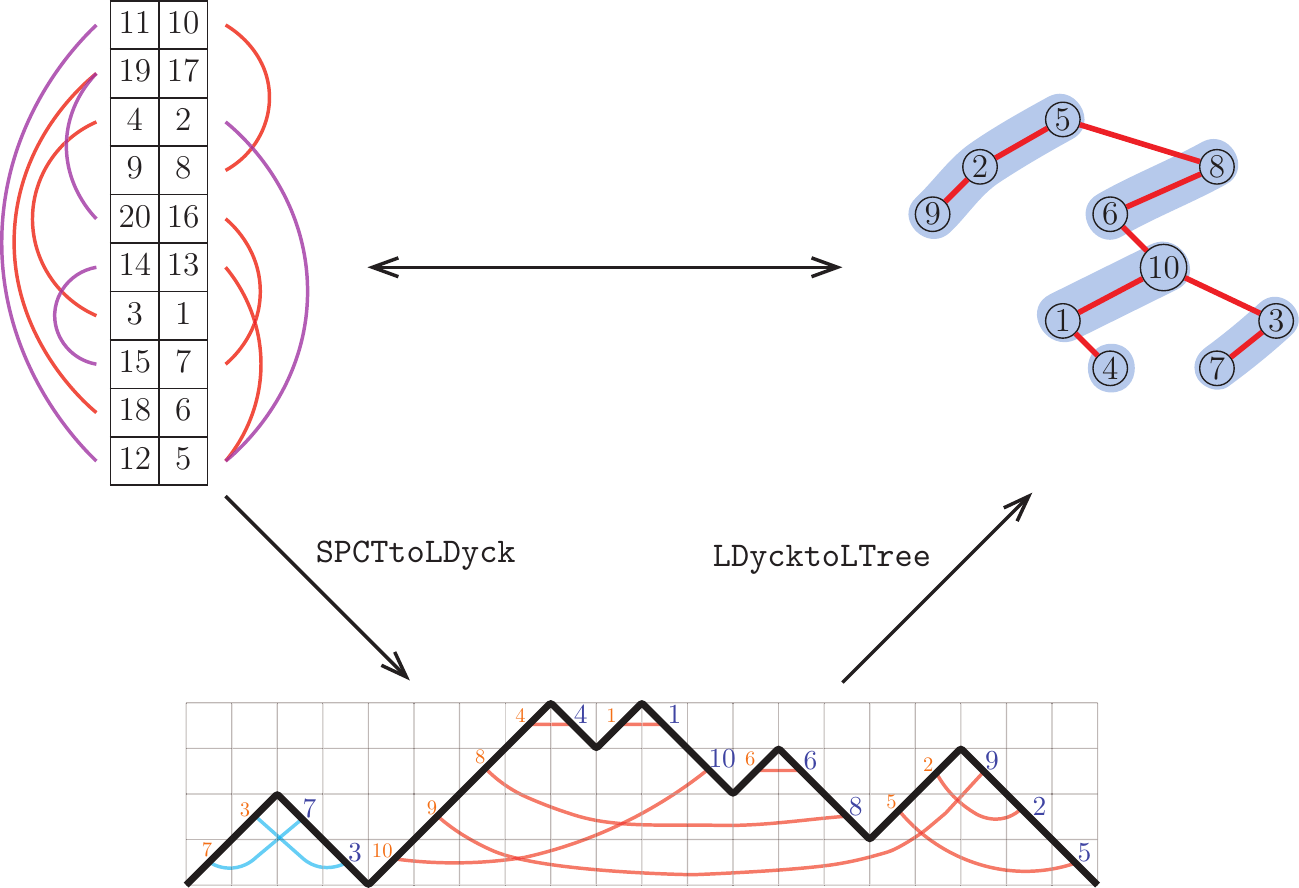}
	\caption{The statistic-preserving bijection between $\SPRCT((2^n))$ and $\ltree_n$ by way of labeled Dyck paths.}
	\label{fig:master figure}
\end{figure}

We return to our question at the beginning of this section, that of enumerating sink tableaux of shape $(2^n)$.
Recall that a $2$-columned tableau $\tau$ is a sink if {$|\des_{NE}(\tau)|=0$}.
By Theorem~\ref{thm:stat-preserving bijection}, the set of sink tableaux of shape $(2^n)$ is in bijection with the set of labeled binary trees on $n$ nodes that have no right ascents.
By a result originally due to  Pak-Postnikov \cite{PakPostnikov} (see also \cite{Pak-slides}), the number of such trees is $(n+1)^{n-1}$.
Thus, we arrive at the following result.
\begin{theorem}\label{thm:standardized 2-column}
	The set of ordered pairs $(\st_1(\tau),\st_2(\tau))$ as $\tau$ ranges over all  $2$-columned {$\SPRCT$s} of size $2n$ is in bijection with the set of labeled binary trees on $n$ nodes with no right ascents. Thus, its cardinality is $(n+1)^{n-1}$.
\end{theorem}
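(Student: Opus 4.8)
The plan is to reduce the enumeration of pairs to a count of equivalence classes, then to sink tableaux, and finally to labeled binary trees with no right ascents, where the Pak--Postnikov formula applies. The first step is to observe that for a $2$-columned tableau $\tau$ the standardized column word is exactly $\st(\tau)=\st_1(\tau)\,\st_2(\tau)$, so recording the ordered pair $(\st_1(\tau),\st_2(\tau))$ is the same datum as recording $\st(\tau)$. Since $\tau_1\sim_{(2^n)}\tau_2$ holds precisely when $\st(\tau_1)=\st(\tau_2)$, the assignment $\tau\mapsto(\st_1(\tau),\st_2(\tau))$ is constant on each equivalence class and separates distinct classes. Hence the set of ordered pairs arising is in bijection with the set of equivalence classes of $\SPRCT((2^n))$ under $\sim_{(2^n)}$, and it suffices to count these classes.

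Next I would invoke Lemma~\ref{lem:unique source sink}, which guarantees a unique sink tableau in each class; thus the number of classes equals the number of sink tableaux of shape $(2^n)$. Using the characterization recorded earlier that a $2$-columned tableau is a sink exactly when $|\des_{NE}(\tau)|=0$, the problem becomes that of counting the tableaux $\tau\in\SPRCT((2^n))$ with $|\des_{NE}(\tau)|=0$.

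Finally I would transport this count across the statistic-preserving bijection of Theorem~\ref{thm:stat-preserving bijection}. Under $\ldycktoltree\circ\prcttoldyck$ the statistic $|\des_{NE}|$ is carried to $\rasc$, so sink tableaux correspond bijectively to labeled binary trees $T\in\ltree_n$ with $\rasc(T)=0$, that is, with no right ascents. The number of such trees is $(n+1)^{n-1}$ by the result of Pak--Postnikov \cite{PakPostnikov}, which completes the proof. The only genuinely new verification is the first step---that passing to the pair $(\st_1,\st_2)$ exactly recovers the equivalence classes---and since the remaining ingredients are already in hand, I expect no serious obstacle here; the substantive work lies entirely in the bijections and the source/sink analysis established earlier.
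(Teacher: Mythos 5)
Your proposal is correct and follows the paper's argument essentially verbatim: pass from ordered pairs to equivalence classes under $\sim_{(2^n)}$ via the standardized column word, use Lemma~\ref{lem:unique source sink} to count sink tableaux instead, characterize sinks by $|\des_{NE}(\tau)|=0$, transport through Theorem~\ref{thm:stat-preserving bijection} to labeled binary trees with no right ascents, and conclude with the Pak--Postnikov count $(n+1)^{n-1}$. No gaps; the one step you flag as needing verification (that $(\st_1,\st_2)$ exactly records the equivalence class) is immediate from the definition of $\equiva$ for shape $(2^n)$.
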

In the next section, we show that the  set of ordered pairs in Theorem~\ref{thm:standardized 2-column}  does in fact coincide with the set of compatible pairs.
To this end, we establish that for every $(\st_1(\tau),\st_2(\tau))$ obtained from some $\tau \in \SPRCT((2^n))$, we can construct an $\SRCT$ $\tau'$ that has two adjacent columns, say $i$ and $i+1,$ satisfying $\st_i(\tau')=\st_1(\tau)$ and $\st_{i+1}(\tau')=\st_2(\tau)$.
An alternative characterization for  $(\st_1(\tau),\st_2(\tau))$ using pattern-avoidance is key.

\section{Allowable pairs of permutations and acyclic graphs}\label{sec:allowable acyclic}
We begin with a lemma characterizing standardized first and second column words of $2$-columned tableaux  in terms of pattern-avoidance.
The {straightforward} proof relies on the triple condition and is omitted.
\begin{lemma}\label{lem:tau pattern avoidance}
	Consider $\tau \in \SPRCT ((2^n))$. Denote the entry in the $i$-th row and $j$-th column by $\tau_{(i,j)}$. Then the following properties hold.
	\begin{enumerate}
		\item For $1\leq i<j\leq n$, If
		$\tau_{(i,1)} >\tau_{(j,1)}$, then $\tau_{(i,2)} > \tau_{(j,2)}$.
		\item  For $1\leq i<j<k \leq n$, if
		$\tau_{(i,1)} <\tau_{(j,1)}< \tau_{(k,1)}$, then we cannot have that $\tau_{(i,2)}>\tau_{(k,2)}>\tau_{(j,2)}$.
	\end{enumerate}
\end{lemma}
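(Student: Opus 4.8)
The plan is to prove both properties by analyzing the triple condition, which in the shape $(2^n)$ setting constrains how entries in the second column relate to entries in the first column. First I would recall that in a $2$-columned tableau $\tau\in\SPRCT((2^n))$, every row that has a second-column entry produces a configuration with $a=\tau_{(i,1)}$, $b=\tau_{(i,2)}$ in the same row, and the triple condition governs interactions with entries lower in the first and second columns. Since the shape is exactly two full columns, every cell in the first column lies immediately left of a cell in the second column, so the relevant triple configurations are abundant and uniform.

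For part (1), I would argue by the triple condition applied to the three cells $\tau_{(i,1)}, \tau_{(i,2)}$ (in row $i$) and $\tau_{(j,1)}$ (in row $j$, with $i<j$). Taking the roles in Figure~\ref{fig:triple configuration} as $a=\tau_{(i,1)}$, $b=\tau_{(i,2)}$, $c=\tau_{(j,1)}$, the hypothesis $\tau_{(i,1)}>\tau_{(j,1)}$ reads $a\geq c$ (in fact $a>c$ since entries are distinct), so the triple condition forces $b>c$, i.e. $\tau_{(i,2)}>\tau_{(j,1)}$. Since rows weakly decrease, $\tau_{(j,1)}\geq\tau_{(j,2)}$ (with strict inequality by distinctness of the column-$2$ entries), hence $\tau_{(i,2)}>\tau_{(j,2)}$, which is exactly the claimed conclusion. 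The key point is that the triple condition directly transfers a first-column inversion to a second-column inversion via the intermediate comparison with $\tau_{(j,1)}$.

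For part (2), the obstacle is that we now have three rows and must rule out a specific forbidden pattern $\tau_{(i,2)}>\tau_{(k,2)}>\tau_{(j,2)}$ under the ascent hypothesis $\tau_{(i,1)}<\tau_{(j,1)}<\tau_{(k,1)}$. I would proceed by contradiction, assuming the forbidden pattern holds, and extract a triple configuration that violates the triple condition. The natural candidate is to use rows $j$ and $k$: set $a=\tau_{(j,1)}$, $b=\tau_{(j,2)}$, $c=\tau_{(k,2)}$, noting $j<k$ so $\tau_{(k,2)}$ sits strictly south of row $j$ in the second column. The triple condition says: if $a\geq c$ then $b>c$. The contrapositive—if $b<c$ then $a<c$—is what I would exploit: from the assumed pattern we have $\tau_{(j,2)}<\tau_{(k,2)}$, i.e. $b<c$, forcing $\tau_{(j,1)}<\tau_{(k,2)}$. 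Combined with $\tau_{(j,1)}\geq\tau_{(j,2)}$ and the pattern inequalities, I would chase the resulting chain of inequalities involving $\tau_{(i,1)},\tau_{(i,2)}$ and the row-$i$ weak decrease $\tau_{(i,1)}\geq\tau_{(i,2)}$ to contradict either the ascent hypothesis or part (1) already established.

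I expect the main obstacle to be bookkeeping in part (2): there are several triple configurations one could form among the three rows, and choosing the one that cleanly produces a contradiction requires care. I anticipate that the cleanest route combines part (1) (applied to a suitable pair among $i,j,k$ whose first-column order happens to be a descent) with one application of the triple condition to the remaining pair, so that the forbidden second-column pattern becomes incompatible with the forced inequalities. Once the correct configuration is identified the calculation is routine, so the real content is the case selection, and I would organize it by first using the ascent hypothesis to locate where the pattern $\tau_{(i,2)}>\tau_{(k,2)}>\tau_{(j,2)}$ creates an inversion that part (1) and the triple condition jointly forbid.
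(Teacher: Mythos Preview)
Your overall strategy matches the paper's: the paper omits the proof entirely, saying only that it ``relies on the triple condition and is omitted,'' and your plan to derive both parts from the triple condition is exactly the intended route.

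There is, however, a genuine slip in your argument for part~(1). In the triple configuration of Figure~\ref{fig:triple configuration}, the cell $c$ lies in the same column as $b$, strictly below it. Since $b=\tau_{(i,2)}$ is in column~$2$, the cell $c$ must also be in column~$2$; your choice $c=\tau_{(j,1)}$ is in column~$1$ and is not a valid instance of the triple configuration as defined in the paper. The fix is immediate and uses the same ingredients in a different order: take $c=\tau_{(j,2)}$. From the row decrease $\tau_{(j,1)}>\tau_{(j,2)}$ and the hypothesis $\tau_{(i,1)}>\tau_{(j,1)}$ you get $a=\tau_{(i,1)}>\tau_{(j,2)}=c$, and then the triple condition yields $b=\tau_{(i,2)}>\tau_{(j,2)}$ directly.

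Your plan for part~(2) is correct, and in fact the chase you anticipate is short: from $\tau_{(j,2)}<\tau_{(k,2)}$ the contrapositive of the triple condition (with $a=\tau_{(j,1)}$, $b=\tau_{(j,2)}$, $c=\tau_{(k,2)}$) gives $\tau_{(j,1)}<\tau_{(k,2)}$; combining with the ascent hypothesis and the assumed pattern yields
\[
\tau_{(i,1)}<\tau_{(j,1)}<\tau_{(k,2)}<\tau_{(i,2)},
\]
contradicting the row decrease $\tau_{(i,1)}>\tau_{(i,2)}$. No appeal to part~(1) is needed.
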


Consider $\sigma,\gamma\in \sgrp{n}$.
We say that $(\sigma,\gamma)$ is \emph{$(21,12)$-avoiding} if $\sigma(i)>\sigma(j)$ for integers $1\leq i<j\leq n$ implies that $\gamma(i)>\gamma(j)$.
Similarly, we say that it is \emph{$(123,312)$-avoiding} if $\stan(\sigma(i)\sigma(j)\sigma(k))=123$ for integers $1\leq i<j<k\leq n$ implies that $\stan(\gamma(i)\gamma(j)\gamma(k))\neq 312$.
The statement that $(\sigma,\gamma)$ is $(21,12)$-avoiding is equivalent to saying that $\sigma \leq_{L} \gamma$ as it implies that $\Inv(\sigma)\subseteq \Inv(\gamma)$.
We call an ordered pair of permutations $(\sigma,\gamma)$ an \emph{allowable pair} provided it is both $(21,12)$ and $(123,312)$-avoiding.
Lemma~\ref{lem:tau pattern avoidance} states that $(\st_1(\tau),\st_2(\tau))$ is an allowable pair for any $2$-columned tableau $\tau$.
\begin{remark}
	The choice of terminology may be explained by noticing that a pair $(\sigma,\gamma)$ is allowable in our sense if and only if the pair $(\sigma^c,\gamma^c)$ is allowable in the sense of \cite{AtkinsonThiyagarajah}. Here $\sigma^{c}$ refers to the \emph{complement} of $\sigma$, that is, the permutation obtained by replacing $i$ with $n-i+1$ if $\sigma\in \sgrp{n}$.
	Under the complementation map, our pattern-avoidance characterization for allowable pairs  is equivalent to the one noted in the remark preceding \cite[Lemma 3]{ALW}.
 \end{remark}

 Our next aim is to show that the set of allowable pairs is equal to the set of ordered pairs of permutations considered in Theorem~\ref{thm:standardized 2-column}.
 We achieve this by associating a directed acyclic graph  to an allowable pair.
 The acyclicity of this graph implies that its node set can be endowed with the structure of a poset as follows.
 Given nodes $v,w$, we define $v\geq w$ if there exists a directed path from $v$ to $w$.
 For the graph we construct, a linear extension of the associated poset, known as a \emph{topological sort} in theoretical computer science, is a $2$-columned tableau.
 We provide the details next.

Given permutations {$\sigma_1,\ldots ,\sigma_k\in \mathfrak{S}_n$} where $k\geq 2$, we say that the sequence $(\sigma_1,\ldots,\sigma_k)$ is an \emph{allowable sequence} of \emph{length} $k$ if  $(\sigma_j,\sigma_{j+1})$ is allowable for all $1\leq j\leq k-1$.
With an allowable sequence {$(\sigma_1,\ldots,\sigma_k)$} we associate a directed simple graph $\mathbf{G}\coloneqq \mathbf{G}(\sigma_1,\dots,\sigma_k)$ as follows.
Consider the composition diagram of shape $(k^n)$.
Place a node in the middle of each cell of this diagram.
These nodes together form the node (or vertex) set of $\mathbf{G}$.
We reference each node by the pair $(i,j)$ where $i$ is the row and $j$ is the column to which this node belongs.
The edge set of $\mathbf{G}$ is determined by $(\sigma_1,\dots,\sigma_k)$ as follows.
\begin{enumerate}
\item For  all $1\leq i\leq n$ and $1\leq j\leq k-1$, draw a directed edge from $(i,j)$ to $(i,j+1)$.
We refer to these edges as \emph{horizontal edges}.
\item For all $1\leq j\leq k$, draw a directed edge from
$(p,j)$ to $(i,j)$ where $1\leq i\neq p\leq n$  are such that $\sigma_{j}(i)<\sigma_{j}(p)$.
 We refer to these edges as \emph{vertical edges}.
\item For all $2\leq j\leq k$, draw a directed edge from
$(p,j)$ to $(i,j-1)$ where $1\leq i< p\leq n$  are such that $\sigma_{j}(i)<\sigma_{j}(p)$.
We refer to these edges as \emph{diagonal edges}.
\end{enumerate}
For any $1\leq j\leq k$ and $1\leq i\leq n$, the nodes $(i,j)$ are said to belong to the $j$-th column.
Note that all horizontal edges are oriented from west to east, whereas all diagonal edges are oriented from southeast to northwest.
Furthermore, they connect nodes that belong to consecutive columns.

Figure~\ref{fig:acyclic} depicts the graph $\mathbf{G}$ associated to the allowable sequence $(123,132,321)$.
Note that it does not contain any directed cycles.
Thus, its nodes can be assigned distinct numbers from $1$ through $9$ such that all arrows point from nodes with larger labels to those with smaller labels.
While this labeling is not necessarily unique, the  conditions on the edge set guarantee that we obtain an $\SPRCT$ of shape $(3^3)$ in this manner.
We emphasize here that the condition for the existence of a diagonal edge is such that the triple condition, {using the version stated in \cite[Definition 4.1]{HLMvW-1}}, holds.
 \begin{figure}[htbp]
	\includegraphics[scale=0.6]{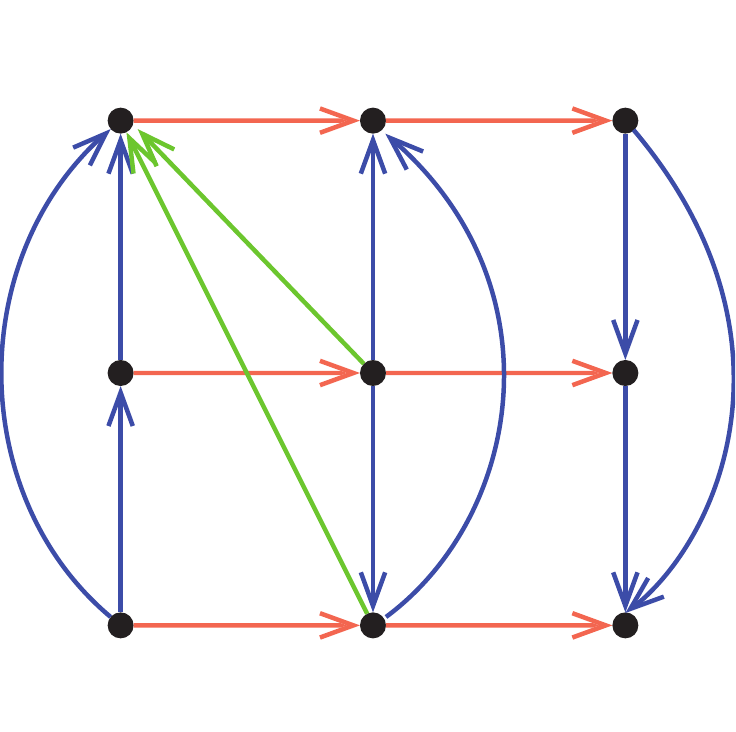}
	\caption{The directed graph $\mathbf{G}((123,132,321)).$}
	\label{fig:acyclic}
\end{figure}

\begin{remark}
Henceforth, whenever we use the term cycle, we mean a simple directed cycle.
Thus we do not allow repeating edges or nodes (except the initial and terminal nodes that are the same). Also, an edge directed from node $A$ towards node $B$ is denoted by $\overrightarrow{AB}$.
\end{remark}

\begin{theorem}\label{thm:G 2 permutation acyclic}
Given an allowable pair $(\sigma_1,\sigma_2)$, we have that $\mathbf{G}(\sigma_1,\sigma_2)$ is acyclic.
\end{theorem}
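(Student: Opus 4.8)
The plan is to argue by contradiction: assume $\mathbf{G}(\sigma_1,\sigma_2)$ contains a directed cycle and choose one, say $C$, using the fewest edges. The first observation is that within each column the vertical edges encode the total order by $\sigma_j$-value (an edge $(p,j)\to(i,j)$ means $\sigma_j(p)>\sigma_j(i)$), so they are transitive and acyclic. Consequently $C$ cannot lie in a single column, and in a minimal cycle no two consecutive edges can both be vertical, since two consecutive vertical edges $(x,j)\to(y,j)\to(z,j)$ could be replaced by the chord $(x,j)\to(z,j)$, yielding a shorter cycle. Because horizontal edges only go from column $1$ to column $2$ and diagonal edges only from column $2$ to column $1$, this forces $C$ to have the cyclic shape $H,[V_2],D,[V_1],H,[V_2],D,[V_1],\dots$ repeated $m\ge 1$ times, where each bracketed vertical step is optional. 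I record the rows: the $t$-th horizontal edge is $(a_t,1)\to(a_t,2)$, the $t$-th diagonal edge is $(b_t,2)\to(c_t,1)$, and indices are read modulo $m$.

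Reading off the edge conditions gives, for each $t$: $\sigma_2(a_t)\ge\sigma_2(b_t)>\sigma_2(c_t)$ together with $c_t<b_t$ (from the column-$2$ phase and the diagonal edge), and $\sigma_1(c_t)\ge\sigma_1(a_{t+1})$ (from the column-$1$ phase), with equalities exactly when the corresponding vertical step is absent. In particular $\sigma_2$ strictly drops from $a_t$ to $c_t$. For the return step $c_t\to a_{t+1}$ I use $\sigma_1\leq_L\sigma_2$: if $c_t\le a_{t+1}$ then either $c_t=a_{t+1}$ or $(c_t,a_{t+1})$ is an inversion of $\sigma_1$ hence of $\sigma_2$, so in all these cases $\sigma_2(c_t)\ge\sigma_2(a_{t+1})$ and thus $\sigma_2(a_t)>\sigma_2(a_{t+1})$. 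Were this to hold for every $t$, telescoping around the cycle would give $\sigma_2(a_1)>\sigma_2(a_1)$, a contradiction. Hence there is a \emph{problematic} step with $\sigma_2(c_t)<\sigma_2(a_{t+1})$; the argument just given forces $c_t>a_{t+1}$ and $\sigma_1(c_t)>\sigma_1(a_{t+1})$ for it.

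The crux is to eliminate this problematic step, and this is exactly where the remaining structure must be exploited. At such a step the three rows satisfy $a_{t+1}<c_t<b_t$. Since $c_t<b_t$ but $\sigma_2(c_t)<\sigma_2(b_t)$, the pair $(c_t,b_t)$ is not an inversion of $\sigma_2$, so it is not an inversion of $\sigma_1$ either, giving $\sigma_1(c_t)<\sigma_1(b_t)$; combined with $\sigma_1(a_{t+1})<\sigma_1(c_t)$ this shows $\sigma_1$ is increasing on $a_{t+1}<c_t<b_t$, i.e.\ a $123$ pattern. Now I split on whether the diagonal edge $(b_t,2)\to(a_{t+1},1)$ exists. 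Because $a_{t+1}<b_t$, it exists precisely when $\sigma_2(a_{t+1})<\sigma_2(b_t)$; in that case I reroute $C$ by replacing $(b_t,2)\to(c_t,1)\to(a_{t+1},1)$ with the single diagonal edge, deleting the node $(c_t,1)$ and contradicting minimality. Otherwise $\sigma_2(a_{t+1})>\sigma_2(b_t)$, and then $\sigma_2(a_{t+1})>\sigma_2(b_t)>\sigma_2(c_t)$ shows $\sigma_2$ realizes the pattern $312$ on $a_{t+1}<c_t<b_t$, contradicting the $(123,312)$-avoidance of the allowable pair. Either way we reach a contradiction, so no cycle exists.

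I expect the main obstacle to be precisely this last paragraph: recognizing that the two defining conditions of an allowable pair play complementary roles --- the weak-order condition $\sigma_1\leq_L\sigma_2$ drives the telescoping that localizes the failure to a single problematic return, while minimality of $C$ disposes of one sub-case and the $(123,312)$-pattern condition disposes of the other. The bookkeeping in the first two paragraphs (the alternation of edge types and the inequalities) is routine once the minimality shortcut for consecutive vertical edges is in place; the genuinely delicate point is selecting the correct triple $a_{t+1}<c_t<b_t$ and verifying that the non-existence of the short-cutting diagonal edge forces exactly the forbidden $312$ pattern.
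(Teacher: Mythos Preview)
Your proof is correct and takes a genuinely different route from the paper's. The paper proceeds by an explicit case analysis on short cycles: it first rules out reduced $3$-cycles and then reduced $4$-cycles by enumerating the possible configurations (the three pictures in their Figures~10--12), and finally argues inductively that any reduced $k$-cycle with $k\ge 5$ can be shortened to a $(k-1)$- or $(k-2)$-cycle using the non-existence of $3$- and $4$-cycles. Your argument is more global: you parametrise a minimal cycle by its sequence of row indices $(a_t,b_t,c_t)$, use the weak-order condition $\sigma_1\le_L\sigma_2$ to drive a telescoping inequality $\sigma_2(a_1)>\sigma_2(a_2)>\cdots$ around the cycle, and then localise the failure of this chain to a single ``problematic'' return step. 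The two allowable-pair hypotheses then play transparently complementary roles: minimality disposes of the case where the short-cutting diagonal edge $(b_t,2)\to(a_{t+1},1)$ exists, and $(123,312)$-avoidance disposes of the case where it does not. One small point you leave implicit but which is worth noting for the write-up: in Case~1 you should observe that the problematic step forces $m\ge 2$ (since for $m=1$ one has $a_{t+1}=a_t$ and then $\sigma_2(a_t)\ge\sigma_2(b_t)>\sigma_2(a_{t+1})=\sigma_2(a_t)$ already gives a contradiction, placing us in Case~2), so the shortened walk has length at least $4$ and is a genuine cycle. What your approach buys is a cleaner separation of the two hypotheses and no need to draw or enumerate configurations; what the paper's approach buys is a style that matches their subsequent induction on the number of columns in Theorem~\ref{thm:general G acyclic}.
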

\begin{proof}
Throughout this proof, let $\mathbf{G}\coloneqq \mathbf{G}(\sigma_1,\sigma_2)$.
Additionally, let the subgraph of $\mathbf{G}$ induced by the vertical edges be $\mathbf{G}'$.
Note that $\mathbf{G}'$ is acyclic.
Thus,  any cycle in $\mathbf{G}$ must use at least one diagonal edge.
Furthermore,  if there are edges $\overrightarrow{AB}$ and $\overrightarrow{BC}$ in $\mathbf{G}'$, then there is a directed edge $\overrightarrow{AC}$.
This implies that two consecutive vertical edges in a cycle in $\mathbf{G}$ may be replaced by a single vertical edge.
Applying this reduction repeatedly to any cycle results in a (potentially) new cycle   where no two consecutive edges are vertical.
We call such a cycle \emph{reduced}.
Note that $\mathbf{G}$ is acyclic if and only if it does not contain a reduced cycle.

We first show by contradiction that $\mathbf{G}$ does not contain a reduced 3-cycle.
Assume that there is a reduced 3-cycle in $\mathbf{G}$.
This cycle must involve exactly one diagonal edge.
Suppose this diagonal edge is directed from $A\coloneqq (i_1,2)$ to $B\coloneqq (i_2,1)$ where $i_1>i_2$.
Let $C$ and $D$ be the nodes $(i_1,1)$ and $(i_2,2)$ respectively.
Then we have the configuration shown in Figure~\ref{fig:3cyc} in $\mathbf{G}$.
\begin{figure}[ht]
\includegraphics[scale=0.5]{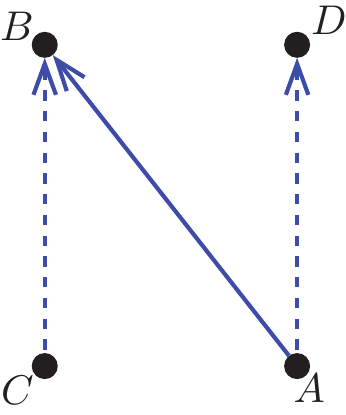}
\caption{Configuration in a potential 3-cycle.}
\label{fig:3cyc}
\end{figure}
As the edge $\overrightarrow{AB}$ is diagonal, we conclude there is a vertical edge $\overrightarrow{AD}$.
As the pair $(\sigma_1,\sigma_2)$ is $(21,12)$-avoiding, there exists a vertical edge $\overrightarrow{CB}$.
A reduced 3-cycle that involves $\overrightarrow{AB}$ must have the third node participating in the cycle being either $C$ or $D$.
As the horizontal edges are oriented eastwards, $\mathbf{G}$ {cannot contain} a reduced 3-cycle.

Next, we show that $\mathbf{G}$ does not contain a reduced $4$-cycle.
Assume that $\mathbf{G}$ contains a reduced $4$-cycle.
Such a cycle must involve exactly one diagonal edge, one horizontal edge and two non-consecutive vertical edges.
 Suppose it involves nodes $A$, $B$, $C$ and $D$ where the edge $\overrightarrow{AB}$ is diagonal.
 Let $A\coloneqq (i_1,2)$ and $B\coloneqq (i_2,1)$ where $i_1>i_2$.
Figures~\ref{fig:case1}, ~\ref{fig:case2}, and ~\ref{fig:case3} depict all the possibilities for such a reduced $4$-cycle.
We analyze each case separately.

\begin{figure}[ht]
\begin{minipage}[b]{0.30\textwidth}
\begin{centering}
\includegraphics[scale=0.45]{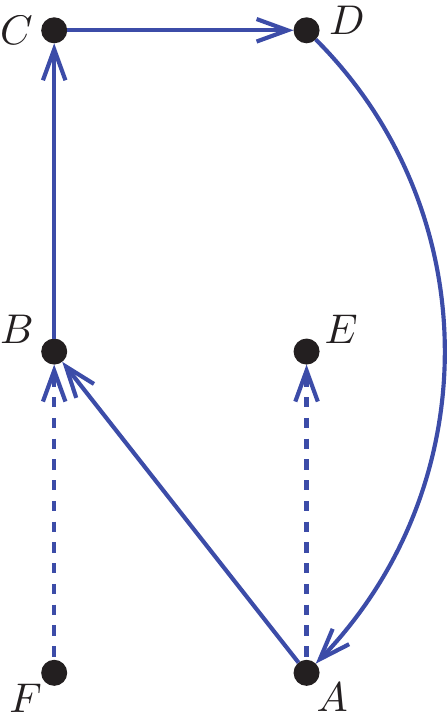}
\caption{Case 1}
\label{fig:case1}
\end{centering}
\end{minipage}
\begin{minipage}[b]{0.30\textwidth}
\begin{centering}
\includegraphics[scale=0.45]{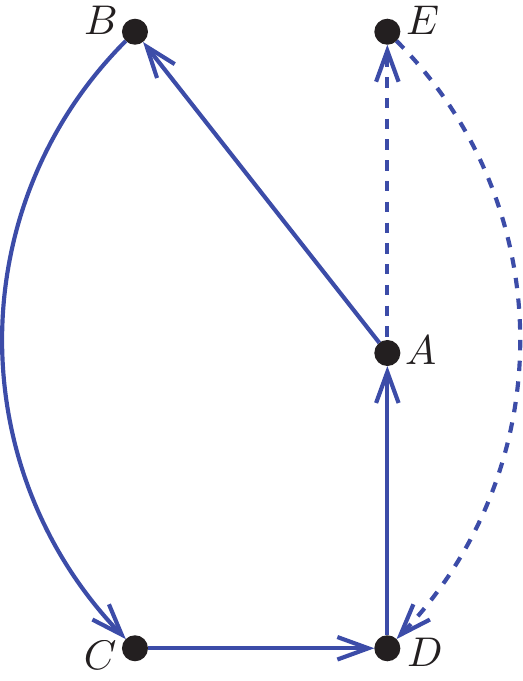}
\caption{Case 2}
\label{fig:case2}
\end{centering}
\end{minipage}
\begin{minipage}[b]{0.30\textwidth}
\begin{centering}
\includegraphics[scale=0.45]{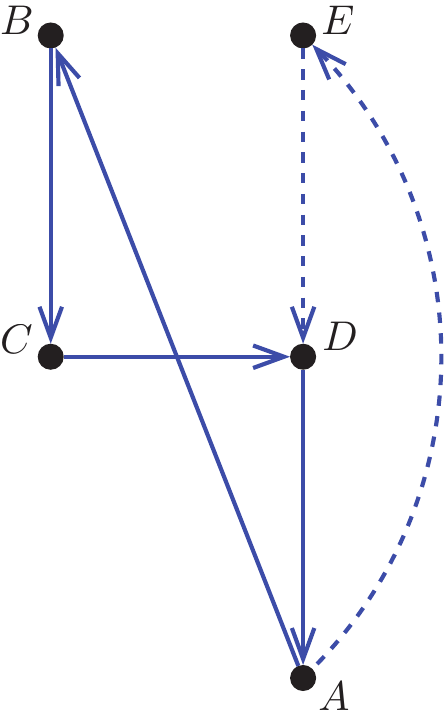}
\caption{Case 3}
\label{fig:case3}
\end{centering}
\end{minipage}
\end{figure}

In Case~1 shown in Figure~\ref{fig:case1}, let $E$ and $F$ be the nodes $(i_2,2)$ and $(i_1,1)$ respectively.
Let $C$ and $D$ be the nodes $(i_3,1)$ and $(i_3,2)$ respectively where $i_3<i_2$.
As $\overrightarrow{AB}$ is diagonal, following the argument used in establishing the non-existence of a reduced 3-cycle, we infer  the existence of vertical edges $\overrightarrow{AE}$ and $\overrightarrow{FB}$ respectively.
Observe that the vertical edges $\overrightarrow{FB}$ and $\overrightarrow{BC}$ together imply that $\sigma_1(i_3)<\sigma_1(i_2)<\sigma_1(i_1)$.
Similarly, the vertical edges $\overrightarrow{DA}$ and  $\overrightarrow{AE}$ together imply that $\sigma_2(i_2)<\sigma_2(i_1)<\sigma_2(i_3)$.
These two sequences of inequalities contradict the fact that $(\sigma_1,\sigma_2)$ is $(123,312)$-avoiding.
Therefore, the 4-cycle shown in Figure~\ref{fig:case1} cannot occur in $\mathbf{G}$.

 We will treat Cases 2 and 3 in Figures~\ref{fig:case2} and \ref{fig:case3} uniformly.
 Let $C$ be the node $(i_3,1)$ where $i_3> i_2$.
 Note that $i_3\neq i_1$ as that would give rise to a 3-cycle, which is impossible.
 Let $E$ be the node $(i_2,2)$.
 As $(\sigma_1,\sigma_2)$ is $(21,12)$-avoiding, the edge $\overrightarrow{BC}$ implies the existence of the vertical edge $\overrightarrow{ED}$.
  The diagonal  edge $\overrightarrow{AB}$ implies  the existence of the vertical edge $\overrightarrow{AE}$.
 Now note that the nodes $A,D,E$ are involved in a $3$-cycle in $\mathbf{G}'$, an impossibility.

At this point we have established that $\mathbf{G}$ does not contain any reduced 3-cycle or 4-cycle.
Hence it does not contain a $3$-cycle or a $4$-cycle.
We  proceed by induction to show that $\mathbf{G}$ does not contain a reduced $k$-cycle for $k\geq 5$.
Consider a reduced $k$-cycle in $\mathbf{G}$ whose nodes read by following the directed edges clockwise are  {$A_1, A_2, \ldots, A_k$}  where we further assume that the edge $\overrightarrow{A_1A_2}$ is diagonal.
There exist two possibilities for the edge $\overrightarrow{A_2A_3}$.
If it is horizontal, then the non-existence of a 3-cycle in $\mathbf{G}$ implies that there is a vertical edge $\overrightarrow{A_1A_3}$.
This in turn yields a shorter cycle of length $k-1$ on the nodes {$A_1,A_3,\ldots, A_k$}.
If, on the other hand, the edge from $\overrightarrow{A_2A_3}$ is vertical, then we are guaranteed that the edge $\overrightarrow{A_3A_4}$ is horizontal.
As $\mathbf{G}$ contains no reduced $4$-cycles, we infer the existence of the vertical edge $\overrightarrow{A_1A_4}$.
Thus, from the original cycle we obtain a shorter cycle of length $k-2$ on the nodes {$A_1,A_4,\ldots, A_k$}.

It follows that if $\mathbf{G}$ contains a reduced $k$-cycle, then it contains a strictly shorter cycle on a subset of the original set of nodes participating in the cycle.
As $\mathbf{G}$ contains no 3-cycles  or 4-cycles, it does not contain any reduced $k$-cycles for $k\geq 5$.
This finishes the proof.\end{proof}
As $\mathbf{G}(\sigma_1,\sigma_2)$ is acyclic for an allowable pair $(\sigma_1,\sigma_2)$ where $\sigma_1,\sigma_2\in \sgrp{n}$, we infer that there exists $\tau\in \SPRCT((2^n))$ such that $(\st_1(\tau),\st_2(\tau))=(\sigma_1,\sigma_2)$.
Using Theorem~\ref{thm:standardized 2-column}, we conclude that the number of allowable pairs $(\sigma_1,\sigma_2)$ with $\sigma_1,\sigma_2\in \sgrp{n}$ is $(n+1)^{n-1}$.
Of course, this fact was proved before \cite{ALW, AtkinsonThiyagarajah, Hamel} but our proof is different from the aforementioned ones.

Armed with the above insight, we will  show that allowable pairs are the same as compatible pairs.
Prior to that, we need the following generalization of Theorem~\ref{thm:G 2 permutation acyclic}.

\begin{theorem}\label{thm:general G acyclic}
Consider the allowable sequence {$(\sigma_1,\ldots, \sigma_k)$} for $k\geq 2$.
The directed graph {$\mathbf{G}(\sigma_1,\ldots, \sigma_k)$} is acyclic.
\end{theorem}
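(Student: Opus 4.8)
The plan is to argue by induction on the number $k$ of permutations, taking Theorem~\ref{thm:G 2 permutation acyclic} as the base case $k=2$ (the case $k=1$ being trivial, since a single column carries only vertical edges, whose orientations record a total order and hence form a DAG). Throughout I will use the observation, already exploited in the proof of Theorem~\ref{thm:G 2 permutation acyclic}, that two consecutive vertical edges of a cycle share a column and may be merged into one; thus it suffices to rule out \emph{reduced} cycles, i.e.\ cycles in which no two consecutive edges are vertical. Two structural facts drive the induction. First, the subgraph of $\mathbf{G}(\sigma_1,\dots,\sigma_k)$ induced on the columns $m+1,\dots,k$ is precisely $\mathbf{G}(\sigma_{m+1},\dots,\sigma_k)$, and $(\sigma_{m+1},\dots,\sigma_k)$ is again an allowable sequence. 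Second, a cycle cannot use vertical edges alone (these form a DAG) nor horizontal edges alone (these only increase the column index), so every cycle uses at least one diagonal edge and hence visits at least two columns.

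For the inductive step, suppose for contradiction that $\mathbf{G}\coloneqq\mathbf{G}(\sigma_1,\dots,\sigma_k)$ contains a cycle, and fix a reduced one, say $\mathcal{C}$. Let $m$ be the smallest column index visited by $\mathcal{C}$; by the remark above $m$ is strictly less than the largest column visited, so column $m+1$ exists. Since horizontal edges enter a column from the west and diagonal edges leave a column to the west, no edge of $\mathcal{C}$ can enter column $m$ horizontally or leave it diagonally. Hence each maximal stretch of $\mathcal{C}$ inside column $m$ is entered by a diagonal edge $\overrightarrow{(a,m+1)(b,m)}$ (so $b<a$ and $\sigma_{m+1}(b)<\sigma_{m+1}(a)$), followed by at most one vertical edge $\overrightarrow{(b,m)(b',m)}$ (so $\sigma_m(b')<\sigma_m(b)$; set $b'=b$ if this edge is absent), and left by a horizontal edge $\overrightarrow{(b',m)(b',m+1)}$. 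Simplicity of $\mathcal{C}$ forces $a\neq b'$.

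The crux is the following replacement claim: under these hypotheses there is always a vertical edge $\overrightarrow{(a,m+1)(b',m+1)}$ in column $m+1$, i.e.\ $\sigma_{m+1}(b')<\sigma_{m+1}(a)$. Granting it, I replace each such column-$m$ excursion of $\mathcal{C}$ (the subpath from $(a,m+1)$ to $(b',m+1)$) by this single vertical edge; the result is a strictly shorter nonempty closed walk lying entirely in columns $\geq m+1$, hence a closed walk in $\mathbf{G}(\sigma_{m+1},\dots,\sigma_k)$. As it contains a directed cycle, it contradicts the inductive hypothesis applied to the shorter allowable sequence $(\sigma_{m+1},\dots,\sigma_k)$. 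This reduction is what carries the argument, and proving the replacement claim is the main obstacle. I would settle it by a short case analysis on the relative position of $b'$ and $b$, using the allowability of $(\sigma_m,\sigma_{m+1})$: when $b'=b$ the claim is immediate from the diagonal edge; when $b<b'$ the $(21,12)$-avoidance applied to $\sigma_m(b)>\sigma_m(b')$ yields $\sigma_{m+1}(b')<\sigma_{m+1}(b)<\sigma_{m+1}(a)$; and when $b'<b$ I assume $\sigma_{m+1}(b')>\sigma_{m+1}(a)$ for contradiction, observe that this produces a $312$ pattern in $\sigma_{m+1}$ at positions $b'<b<a$, invoke $(123,312)$-avoidance to force $\stan(\sigma_m(b')\sigma_m(b)\sigma_m(a))\neq 123$, deduce $\sigma_m(a)<\sigma_m(b)$, and then apply $(21,12)$-avoidance once more to obtain $\sigma_{m+1}(a)<\sigma_{m+1}(b)$, contradicting $\sigma_{m+1}(b)<\sigma_{m+1}(a)$.

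I would emphasize that it is essential to contract excursions at the \emph{minimum} column rather than the maximum. There the excursion exits eastward into column $m+1$, so the required implications run from $\sigma_m$ (the ``first'' permutation of the pair) to $\sigma_{m+1}$ (the ``second''), which is exactly the direction in which the avoidance conditions of the allowable pair $(\sigma_m,\sigma_{m+1})$ are stated. An analogous contraction at the maximum column would instead require deducing constraints on the western permutation from the eastern one, and does not go through.
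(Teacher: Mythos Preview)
Your proof is correct and follows the same inductive strategy as the paper: contract the excursions of a putative cycle at the leftmost column, producing a shorter cycle in $\mathbf{G}(\sigma_{m+1},\ldots,\sigma_k)$ and contradicting the inductive hypothesis. The paper fixes $m=1$ from the outset (after first observing that a cycle must touch column~$1$, else induction applies immediately), whereas you parametrize by the minimum column; this is only a cosmetic difference.

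The one genuine technical divergence is how the replacement edge $\overrightarrow{(a,m+1)(b',m+1)}$ is obtained. You prove it by a direct three-case analysis on the position of $b'$ relative to $b$, invoking $(21,12)$- and $(123,312)$-avoidance explicitly. The paper instead does not reduce the cycle and allows arbitrarily long sub-paths of $\mathbf{C}$ inside columns~$1$ and~$2$; it then observes that the initial and terminal nodes of each such sub-path lie in column~$2$, and invokes the acyclicity of $\mathbf{G}(\sigma_1,\sigma_2)$ (Theorem~\ref{thm:G 2 permutation acyclic}) as a black box: were the vertical edge between those endpoints oriented the wrong way, the sub-path together with that edge would already be a cycle in $\mathbf{G}(\sigma_1,\sigma_2)$. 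This buys the paper a shorter argument with no case analysis, at the cost of reusing the base case in a slightly less transparent way; your version is more self-contained and makes clear exactly which avoidance condition is doing the work at each step. Your closing remark about why one must contract at the minimum column rather than the maximum is a nice observation that the paper leaves implicit.
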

\begin{proof}
Throughout this proof, let {$\mathbf{G}\coloneqq \mathbf{G}(\sigma_1,\ldots, \sigma_k)$}.
Let $\mathbf{G}_h$ (respectively $\mathbf{G}_v$) be the subgraph of $\mathbf{G}$ induced by the horizontal edges (respectively vertical edges).
All edges in $\mathbf{G}_h$ are oriented eastwards, whereas all edges in $\mathbf{G}_v$ go from larger values to smaller values in terms of the permutations in our allowable sequence.
Thus, $\mathbf{G}_h$ and $\mathbf{G}_v$ are both acyclic.

We will establish the assertion in the theorem by induction on $k$.
The base case $k=2$ is Theorem~\ref{thm:G 2 permutation acyclic}.
Assume that the claim holds for all allowable sequences of length $i$ where $2\leq i<k$.
Suppose for the sake of contradiction that $\mathbf{G}$ is not acyclic.

Let $\mathbf{C}$ be a cycle in $\mathbf{G}$.
Observe that $\mathbf{C}$ must contain at least one node each from the first column and the $k$-th column.
If it did not, then $\mathbf{C}$ would in fact be a cycle in either {$\mathbf{G}(\sigma_1,\ldots,\sigma_{k-1})$} or {$\mathbf{G}(\sigma_2,\ldots,\sigma_k)$}, considered as subgraphs of $\mathbf{G}$ in the natural manner.
Our inductive hypothesis forbids either possibility.

Since diagonal edges connect nodes in consecutive columns,  $\mathbf{C}$ has nodes in all columns from  $1$ through $ k$.
Consider the induced subgraph of $\mathbf{C}$ formed from nodes that belong to the first and second columns.
This induced subgraph is a disjoint union of directed paths, say {$\mathbf{P}_1,\ldots, \mathbf{P}_m$}.
Suppose that, amongst the aforementioned paths, the paths {$\mathbf{P}_{i_1},\ldots, \mathbf{P}_{i_r}$} are those that contain at least one node from the first column.
As our cycle is simple, the initial and terminal nodes of each such path are distinct and furthermore, belong to the second column.
Thus, each $\mathbf{P}_{i_j}$ for $1\leq j\leq r$ contains at least one diagonal edge and one horizontal edge.
As $\mathbf{G}(\sigma_1,\sigma_2)$ considered as a subgraph of $\mathbf{G}$ is acyclic, the vertical edge that connects the initial and terminal node of each $\mathbf{P}_{i_j}$ for $1\leq j\leq r$ acquires a unique orientation.
Replacing each $\mathbf{P}_{i_j}$ with the vertical edge connecting its initial and terminal nodes gives another cycle $\mathbf{C'}$ in $\mathbf{G}$.
Our construction implies that  $\mathbf{C'}$ does not involve any node in the first column.
Thus, {$\mathbf{G}(\sigma_2,\ldots,\sigma_k)$} considered as a subgraph of $\mathbf{G}$ contains $\mathbf{C'}$, which is a contradiction.
This finishes the proof.
\end{proof}

The acyclicity of {$\mathbf{G}(\sigma_1,\ldots,\sigma_k)$}  implies that we can label its nodes with distinct positive integers from $1$ through $kn$ such that all edges in $\mathbf{G}$ point from nodes with larger labels to those with smaller ones.
Figure~\ref{fig:tableau given allowable} demonstrates the resulting $\SPRCT$  for the directed graph from Figure~\ref{fig:acyclic}.
\begin{figure}[htbp]
\centering
\includegraphics[scale=0.8]{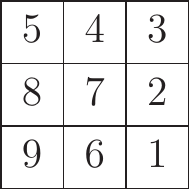}
\caption{An $\SPRCT$ corresponding to $\mathbf{G}((123,132,321))$.}
\label{fig:tableau given allowable}
\end{figure}

\begin{theorem}\label{thm:existence of prct}
Given an allowable sequence {$(\sigma_1,\ldots,\sigma_k)$} where {$\sigma_1,\ldots,{\sigma _k} \in \sgrp{n}$}, there exists an $\SPRCT$  {$\tau$ of} shape $(k^n)$ such that $\st_i(\tau)=\sigma_i$ for $1\leq i\leq k$.
\end{theorem}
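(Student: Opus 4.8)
The plan is to derive existence directly from the acyclicity established in Theorem~\ref{thm:general G acyclic} by reading off a topological sort of $\mathbf{G}\coloneqq \mathbf{G}(\sigma_1,\ldots,\sigma_k)$. Since $(\sigma_1,\ldots,\sigma_k)$ is an allowable sequence, Theorem~\ref{thm:general G acyclic} guarantees that $\mathbf{G}$ is acyclic. As explained in the discussion preceding that theorem, acyclicity lets us endow the node set with the poset structure in which $v\geq w$ whenever there is a directed path from $v$ to $w$, and then choose a linear extension; concretely, I would assign the labels $1,2,\ldots,kn$ bijectively to the $kn$ nodes so that every directed edge $\overrightarrow{AB}$ satisfies $\mathrm{label}(A)>\mathrm{label}(B)$. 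Defining $\tau$ to be the filling of the diagram $(k^n)$ whose cell $(i,j)$ carries the label of the node $(i,j)$ produces a filling with pairwise distinct entries. It then remains only to verify that $\tau\in \SPRCT((k^n))$ and that $\st_j(\tau)=\sigma_j$ for all $1\leq j\leq k$, and the rest of the proof is a definitional check that the three families of edges of $\mathbf{G}$ encode exactly the three defining properties of an $\SPRCT$.

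The row condition and the column statistics follow by unwinding the horizontal and vertical edges. For each row $i$ and each $1\leq j\leq k-1$ the horizontal edge $\overrightarrow{(i,j)(i,j+1)}$ forces $\tau_{(i,j)}>\tau_{(i,j+1)}$, so the entries strictly decrease along rows. For a fixed column $j$ and rows $i\neq p$, exactly one vertical edge joins $(i,j)$ and $(p,j)$, oriented so that $\tau_{(p,j)}>\tau_{(i,j)}$ precisely when $\sigma_j(i)<\sigma_j(p)$; hence the $j$-th column word $w^j$, where $w^j_a=\tau_{(a,j)}$, satisfies $w^j_a>w^j_b \iff \sigma_j(a)>\sigma_j(b)$ for all $a<b$. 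By the standardization convention recalled in Section~\ref{subsec:symmetric group}, this says exactly that $\st_j(\tau)=\sigma_j$. In particular $\st_1(\tau)=\sigma_1$, so $\tau$ has a well-defined type, and all of the required column statistics come out correctly.

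The remaining step, which I expect to be the one demanding genuine care even though it is purely definitional, is the triple condition; the diagonal edges were constructed precisely to encode it. Fix a triple configuration as in Figure~\ref{fig:triple configuration} occupying columns $j-1$ and $j$ in rows $i<p$, so that $a=\tau_{(i,j-1)}$, $b=\tau_{(i,j)}$ and $c=\tau_{(p,j)}$. Since all entries are distinct, the triple condition is equivalent to its contrapositive, namely that $b<c$ implies $a<c$. Now $b<c$ means $\tau_{(i,j)}<\tau_{(p,j)}$, which by the column-$j$ standardization established above is equivalent to $\sigma_j(i)<\sigma_j(p)$; together with $i<p$ this is exactly the hypothesis under which $\mathbf{G}$ contains the diagonal edge $\overrightarrow{(p,j)(i,j-1)}$. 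That edge forces $\tau_{(p,j)}>\tau_{(i,j-1)}$, that is $a<c$, which is the desired conclusion. As every triple configuration in the rectangle $(k^n)$ arises this way for some adjacent column pair and some rows $i<p$, we conclude that $\tau$ satisfies the triple condition, so $\tau\in \SPRCT((k^n))$, completing the argument. The only substantive difficulty, the acyclicity of $\mathbf{G}$, has already been handled upstream by Theorem~\ref{thm:general G acyclic}, so the present proof is essentially a translation between edge conditions and tableau axioms, with the sole subtlety being bookkeeping of the standardization convention and of strict versus weak inequalities.
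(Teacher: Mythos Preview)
Your proof is correct and follows precisely the approach the paper intends: the paper treats this theorem as an immediate consequence of Theorem~\ref{thm:general G acyclic}, merely stating that acyclicity of $\mathbf{G}(\sigma_1,\ldots,\sigma_k)$ allows one to label the nodes with $1,\ldots,kn$ so that edges go from larger to smaller labels, and asserting (via the example in Figure~\ref{fig:tableau given allowable}) that the result is an $\SPRCT$. You have simply made explicit the verification that the three families of edges encode, respectively, the row condition, the column standardizations, and the triple condition --- details the paper leaves to the reader.
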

To obtain a similar statement for $\SRCT$s, we require that $\sigma_1$ be the identity permutation in $\sgrp{n}$.
First, we use the weak Bruhat order to extend allowable pairs to allowable sequences whose first coordinate is the identity.
\begin{lemma}\label{lem:extending allowable pairs}
Given an allowable pair $(\delta_1,\delta_2)$ where $\delta_1,\delta_2\in \sgrp{n}$, there exists an allowable sequence {$(\sigma_1,\ldots,\sigma_k)$} where {$k\geq 3$} such that $\sigma_{k-1}=\delta_1$, $\sigma_{k}=\delta_2$ and $\sigma_1=\epsilon_n$, { and $\delta_1=\epsilon_n$ if $k=2$}.
\end{lemma}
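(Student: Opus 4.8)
The plan is to prepend to the given pair $(\delta_1,\delta_2)$ a saturated chain in the weak Bruhat order that climbs from the identity $\epsilon_n$ up to $\delta_1$, and then to observe that \emph{every} cover relation in the weak Bruhat order is automatically an allowable pair. Granting this observation, the lemma follows immediately.

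First I would note that since $(\delta_1,\delta_2)$ is allowable it is in particular $(21,12)$-avoiding, so $\delta_1\leq_L\delta_2$; and since $\epsilon_n$ is the minimum of the weak Bruhat lattice we have $\epsilon_n\leq_L\delta_1$. As $\sgrp{n}$ under $\leq_L$ is graded by the number of inversions, there is a saturated chain
$$\epsilon_n=\pi_0\prec_L\pi_1\prec_L\cdots\prec_L\pi_m=\delta_1,\qquad m=|\Inv(\delta_1)|.$$
I would then define $\sigma_j\coloneqq\pi_{j-1}$ for $1\leq j\leq m+1$, set $\sigma_{m+2}\coloneqq\delta_2$, and put $k\coloneqq m+2$. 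When $\delta_1=\epsilon_n$ we have $m=0$ and $k=2$, with the sequence being just $(\epsilon_n,\delta_2)$; otherwise $m\geq 1$ and $k\geq 3$. In all cases $\sigma_1=\epsilon_n$, $\sigma_{k-1}=\delta_1$ and $\sigma_k=\delta_2$, exactly as demanded.

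It remains to verify that $(\sigma_1,\ldots,\sigma_k)$ is an allowable sequence, i.e.\ that each consecutive pair is allowable. The final pair $(\sigma_{k-1},\sigma_k)=(\delta_1,\delta_2)$ is allowable by hypothesis, so I only need to treat the pairs $(\pi_{j-1},\pi_j)$ arising from covers. The $(21,12)$-avoiding condition is free, since $\pi_{j-1}\prec_L\pi_j$ gives $\Inv(\pi_{j-1})\subseteq\Inv(\pi_j)$. The one point requiring thought is the $(123,312)$-avoiding condition, and this is where the argument really lives: a violation would be a triple of positions $a<b<c$ with $\pi_{j-1}(a)<\pi_{j-1}(b)<\pi_{j-1}(c)$ yet $\stan(\pi_j(a)\pi_j(b)\pi_j(c))=312$, forcing $\pi_j(b)<\pi_j(c)<\pi_j(a)$. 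Then both $(a,b)$ and $(a,c)$ are inversions of $\pi_j$ but not of $\pi_{j-1}$, so $|\Inv(\pi_j)\setminus\Inv(\pi_{j-1})|\geq 2$. This contradicts the fact that a cover in the weak Bruhat order adds exactly one inversion (the order being graded by inversion number). Hence no such violation exists and $(\pi_{j-1},\pi_j)$ is allowable.

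The main obstacle, such as it is, is isolating the observation that a $123\to312$ transition necessarily creates two new inversions and therefore cannot occur across a single cover; once this is in hand, the $(123,312)$-avoidance of every cover is immediate and the rest of the construction is routine. I would take care only with the degenerate case $\delta_1=\epsilon_n$, where the chain is trivial and $k=2$ is permitted by the statement.
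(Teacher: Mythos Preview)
Your proof is correct and follows essentially the same approach as the paper: build a saturated chain in the weak Bruhat order from $\epsilon_n$ to $\delta_1$ and verify that every cover relation is an allowable pair. The only difference is in the verification of the $(123,312)$-avoiding condition for a cover $\gamma_1\prec_L\gamma_2$: the paper argues by cases on how many of the values $p,p+1$ (where $\gamma_2=s_p\gamma_1$) lie in the triple $\{\gamma_1(i),\gamma_1(j),\gamma_1(k)\}$, whereas you observe more directly that a $123\to 312$ transition would create two new inversions, contradicting the grading of the weak order --- a slightly cleaner route to the same conclusion.
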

\begin{proof}
Consider a maximal chain $\sigma_1\prec_L \sigma_2 \prec_L \cdots \prec_L \sigma_{k-1}=\delta_1$ in the weak Bruhat order, where $\sigma_1=\epsilon_n$.
We claim that {$(\sigma_1,\ldots,\sigma_{k-1})$} is an allowable sequence.
It suffices to show that if $\gamma_1,\gamma_2\in \sgrp{n}$ satisfy $\gamma_1\prec_L \gamma_2$, then $(\gamma_1,\gamma_2)$ is an allowable pair.

Since $\gamma_1\prec_L\gamma_2$, we know that $\gamma_2=s_p\gamma_1$ for some $p$ where $p$ comes before $p+1$ in $\gamma_1$.
As inversions in $\gamma_1$ are also inversions in $\gamma_2$,
we infer that $(\gamma_1,\gamma_2)$ is $(21,12)$-avoiding.
We proceed to show that $(\gamma_1,\gamma_2)$ is $(123,312)$-avoiding as well.

Suppose there are $1\leq i<j<k\leq n$ such that $\gamma_1(i)<\gamma_1(j)<\gamma_1(k)$.
We claim that we cannot have $\gamma_2(j)<\gamma_2(k)<\gamma_2(i)$.
Suppose to the contrary.
As $\gamma_2$ is obtained from $\gamma_1$ by swapping $p$ and $p+1$, we have that $\gamma_1$ and $\gamma_2$ differ in exactly two positions.
If neither $p$ nor $p+1$ belong to $\{\gamma_1(i),\gamma_1(j),\gamma_1(k)\}$, then $\gamma_2(i)<\gamma_2(j)<\gamma_2(k)$ as well.
If both $p$ and $p+1$ belong to $\{\gamma_1(i),\gamma_1(j),\gamma_1(k)\}$, then $\stan(\gamma_2(i)\gamma_2(j)\gamma_2(k))$ is a permutation that is distance $1$ from the identity in the Hasse diagram of the weak Bruhat order on $\sgrp{3}$.
In particular, $\stan(\gamma_2(i)\gamma_2(j)\gamma_2(k))\neq 312$.
If exactly one of $p$ or $p+1$ belongs to $\{\gamma_1(i),\gamma_1(j),\gamma_1(k)\}$, then  $\stan(\gamma_1(i)\gamma_1(j)\gamma_1(k))=\stan(\gamma_2(i)\gamma_2(j)\gamma_2(k))$.
Thus, $(\gamma_1,\gamma_2)$ is $(123,312)$-avoiding and we conclude that $(\gamma_1,\gamma_2)$ is an allowable pair.

The preceding argument implies that {$(\sigma_1,\ldots,\sigma_{k-1}=\delta_1)$} is an allowable sequence and therefore, {$(\sigma_1,\ldots,\sigma_{k-1},\delta_2)$} is an allowable sequence as well.
\end{proof}

We have the following corollary of Lemma~\ref{lem:extending allowable pairs} which establishes that allowable pairs and compatible pairs are the same notion.
\begin{corollary}\label{cor:existence of srcts}
Given an allowable pair $(\sigma,\gamma)$ where $(\sigma,\gamma)\in \sgrp{n}$, there exists an $\SRCT$ $\tau$ of shape $(k^n)$  where $k\geq 2$ and such that {$\st_{k-1}(\tau)=\sigma$ and $\st_k(\tau)=\gamma$}.
\end{corollary}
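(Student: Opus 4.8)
The plan is to chain together Lemma~\ref{lem:extending allowable pairs} and Theorem~\ref{thm:existence of prct}, and then to upgrade the resulting $\SPRCT$ to a genuine $\SRCT$ by observing that its type is the identity permutation.

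First I would apply Lemma~\ref{lem:extending allowable pairs} to the given allowable pair $(\sigma,\gamma)$. This produces an allowable sequence $(\sigma_1,\ldots,\sigma_k)$ with $k\geq 2$ such that $\sigma_1=\epsilon_n$, $\sigma_{k-1}=\sigma$ and $\sigma_k=\gamma$. (When $\sigma=\epsilon_n$ the lemma permits $k=2$; otherwise a maximal chain $\epsilon_n=\sigma_1\prec_L\cdots\prec_L\sigma_{k-1}=\sigma$ in the weak Bruhat order supplies the intermediate permutations, forcing $k\geq 3$.) Next I would feed this allowable sequence into Theorem~\ref{thm:existence of prct}, which yields an $\SPRCT$ $\tau$ of shape $(k^n)$ with $\st_i(\tau)=\sigma_i$ for all $1\leq i\leq k$. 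In particular $\st_{k-1}(\tau)=\sigma$ and $\st_k(\tau)=\gamma$, so $\tau$ already records the desired pair in its last two columns.

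The one point that needs care is promoting $\tau$ from an $\SPRCT$ to an $\SRCT$. Here I would use that $\st_1(\tau)=\sigma_1=\epsilon_n$, so the type of $\tau$ is the identity permutation; since the shape $(k^n)$ has exactly $n$ rows, we have $\epsilon_n=\epsilon_{\ell((k^n))}$, and the type condition is precisely $\sigma=\epsilon_{\ell(\alpha)}$. By the remark identifying an $\SPRCT$ of type $\epsilon_{\ell(\alpha)}$ with a standard composition tableau of the same shape, $\tau$ is an $\SRCT$ of shape $(k^n)$ satisfying $\st_{k-1}(\tau)=\sigma$ and $\st_k(\tau)=\gamma$, which is exactly what is claimed.

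I do not expect a hard obstacle here: all the substantive work lives in the two results already in hand, so this is a corollary in the genuine sense. The only thing to verify is the bookkeeping that the first coordinate of the extended allowable sequence being $\epsilon_n$ forces $\tau$ to have identity type and hence be a bona fide composition tableau rather than merely a permuted one.
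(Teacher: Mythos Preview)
Your proposal is correct and follows essentially the same approach as the paper's proof, which simply constructs the allowable sequence via Lemma~\ref{lem:extending allowable pairs}, invokes acyclicity of $\mathbf{G}(\sigma_1,\ldots,\sigma_k)$, and declares that the conclusion follows. Your version is more explicit than the paper's about the final step---that $\st_1(\tau)=\epsilon_n$ forces the type to be the identity, whence the $\SPRCT$ is actually an $\SRCT$---which the paper leaves implicit.
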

\begin{proof}
Construct an allowable sequence {$(\sigma_1,\ldots,\sigma_{k-1},\sigma_k)$} where $\sigma_1=\epsilon_n$, $\sigma_{k-1}=\sigma$ and $\sigma_k=\gamma$.
Then {$\mathbf{G}(\sigma_1,\ldots,\sigma_k)$} is acyclic.
The conclusion now follows.
\end{proof}

\section{Final remarks}
\begin{enumerate}
\item As mentioned earlier, the representation theory of $H_n(0)$ in type $A$ is related to the algebra of quasisymmetric functions.
Indeed, there is a map $ch$ called the \emph{quasisymmetric characteristic}  \cite{DKLT} that associates a quasisymmetric function to an $H_n(0)$-module.
Mimicking the proof of \cite[{Theorem 5.2}]{TvW}, one can establish that
$$ch(\mathbf{S}_{\alpha})=\sum_{\tau\in \SPRCT(\alpha)} F_{\comp(\tau)}.$$
 Here $\comp(\tau)$ is the composition of $n$ naturally associated with the subset $\des(\tau)$ of $[n-1]$ and $F_{\gamma}$ is the fundamental quasisymmetric function indexed by  a composition $\gamma$ \cite[{Chapter 7}]{stanley-ec2}.
 It would be interesting to investigate the algebraic/combinatorial properties of $ch(\mathbf{S}_{\alpha})$ in addition to the representation-theoretic properties of the $\mathbf{S}_{\alpha}$.

 \item In the introduction we mentioned that the study of ascents and descents on labeled binary trees is {tied to} the enumeration of chambers in various Coxeter deformations.
 The number of tableaux in $\SPRCT((2^n))$ is $n!\Cat_{n}$, which equals the number of regions in the Catalan arrangement in $\mathbb{R}^n$ defined by the hyperplanes $x_i-x_j=0,\pm 1$ for $1\leq i<j\leq n$.
 The sink tableaux of shape $(2^n)$ are distinguished representatives of the equivalence classes under $\sim_{(2^n)}$ and by Theorem~\ref{thm:standardized 2-column}, there are $(n+1)^{n-1}$ many of them.
 This is the number of regions in the Shi arrangement in $\mathbb{R}^n$ defined by the hyperplanes $x_i-x_j=0, 1$ for $1\leq i<j\leq n$.
 In view of this, it would be interesting to understand how the coarsening of regions of the Catalan arrangement into the regions of the Shi arrangement relates to grouping of tableaux in $\SPRCT((2^n))$ into equivalence classes under $\sim_{(2^n)}$.
 Our $0$-Hecke operators translate into operators on regions of the Catalan arrangement and this viewpoint merits further study.
\end{enumerate}

\section*{Acknowledgements}
We would like to thank Sara Billey, Ira Gessel and Sean Griffin for extremely helpful discussions. We would also like to thank the anonymous referees for their valuable feedback and suggestions.

\def\cprime{$'$}

\end{document}